\newtheorem{theorem}{Theorem}[section]
\newtheorem{corollary}[theorem]{Corollary}
\newtheorem{lemma}[theorem]{Lemma}
\newtheorem{proposition}[theorem]{Proposition}
\theoremstyle{definition}
\newtheorem{definition}[theorem]{Definition}
\newtheorem{example}[theorem]{Example}
\newtheorem{problem}[theorem]{Problem}
\theoremstyle{remark}
\newtheorem{remark}[theorem]{Remark}
\numberwithin{equation}{section}
\newcommand{\RR}{\mathbb{R}}
\newcommand{\mc}[1]{\mathcal{#1}}
\newcommand{\mb}[1]{\mathbf{#1}}
\newcommand{\Coh}{\operatorname{Coh}}
\newcommand{\Id}{\operatorname{Id}}
\def\<#1>{\langle #1 \rangle}
\newbox\onebox
\newcommand{\coherent}[1]{%
\mathbin{\setbox\onebox=\hbox{$=$}\lower0.7\ht\onebox\hbox{$\stackrel{#1}{=}$}}}
\begin{document}

\title{Semigroups generated by partitions}

\author{O. Dovgoshey}
\address{\textbf{O. Dovgoshey}\\
Function theory department\\
Institute of Applied Mathematics and Mechanics of NASU\\
Dobrovolskogo str. 1, Slovyansk 84100, Ukraine\\
e-mail: oleksiy.dovgoshey@gmail.com
}

\subjclass[2010]{Primary 20M05}
\keywords{Partition of a set, semigroup of binary relations, band of semigroups.}

\begin{abstract}
Let \(X\) be a nonempty set and \(X^{2}\) be the Cartesian square of \(X\). Some semigroups of binary relations generated partitions of \(X^2\) are studied. In particular, the algebraic structure of semigroups generated by the finest partition of \(X^{2}\) and, respectively, by the finest symmetric partition of \(X^{2}\) are described.
\end{abstract}

\maketitle

\section{Introduction}

Let \(X\) be a set. A \emph{binary relation} on \(X\) is a subset of the Cartesian square
\[
X^2 = X \times X = \{\<x, y> \colon x, y \in X\}.
\]
The \emph{composition} of binary relations \(\psi\) and \(\gamma\) on \(X\) is a binary relation \(\psi \circ \gamma \subseteq X \times X\) for which \(\<x, y> \in \psi \circ \gamma\) holds if and only if there is \(z \in X\) such that \(\<x, z> \in \psi\) and \(\<z, y> \in \gamma\). It is well-known that \(\circ\) is an associative operation on the set of binary relations on \(X\).

Recall that a \emph{semigroup} is a pair \((\mc{S}, *)\) consisting of a nonempty set \(\mc{S}\) and an associative operation \(* \colon \mc{S} \times \mc{S} \to \mc{S}\) which is called the \emph{multiplication} on \(\mc{S}\). As usual, we use the symbol \(x*y\) instead of \(*\<x,y>\) to indicate the result of applying \(*\) to \(\<x, y>\). A semigroup \(\mc{S} = (\mc{S}, *)\) is a \emph{monoid} if there is \(e \in \mc{S}\) such that
\[
e*s = s*e = s
\]
for every \(s \in \mc{S}\). In this case we say that \(e\) is the \emph{identity element} of the semigroup \((\mc{S}, *)\). A \emph{zero} of a semigroup \((\mc{S}, *)\) is an element \(\theta \in \mc{S}\) for which
\[
\theta * s = s * \theta = \theta
\]
holds for every \(s \in \mc{S}\). A set \(A \subseteq \mc{S}\) is a \emph{set of generators} of \((\mc{S}, *)\) if, for every \(s \in \mc{S}\), there is a finite sequence \(s_1, \ldots, s_k\) of elements of \(A\) such that
\[
s = s_1 * \ldots * s_k.
\]
A nonempty subset \(B\) of \(\mc{S}\) is a \emph{subsemigroup} of \((\mc{S}, *)\) if \(x*y \in B\) holds for all \(x\), \(y \in B\).

We denote by \(\mc{B}_{X} = (\mc{B}_{X}, \circ)\) the semigroup of all binary relations defined on a set \(X\) such that the composition \(\circ\) of relations is the multiplication on \(\mc{B}_{X}\). It is well-known that every semigroup \((\mc{H}, *)\) is isomorphic to a subsemigroup of \(\mc{B}_{X}\) for a suitable \(X\). The properties of \(\mc{B}_{X}\) have been investigated by many mathematicians~\cite{Namnak2006, McAlister1971, Tamura1964, Chaudhuri1980, Clifford1970, Diasamidze1990, Diasamidze1990a, Diasamidze1990b, Kim1978, Konieczny1995, Montague1969, Plemmons1970, Plemmons1970a, Preston1973, Schein1976, Zaretskii1962, Zaretskii1963}. In particular, the minimal generating sets for \(\mc{B}_{X}\) were considered in~\cite{Devadze1968} and~\cite{Konieczny2011}. The so-called complete semigroups of binary relations are investigated by Yasha Diasamidze, Shota Makharadze et al. (see, for example, \cite{Avaliani2004, Givradze2003, Diasamidze1990a, Diasamidze2002, Diasamidze2002a, Diasamidze2016, DiMa, Albayrak2018, Givradze2018}).

Following~\cite{Mckenzie1997} we say that a set \(B\) of binary relations on a set \(X\) is \emph{transitive} if for every \(\<x, y> \in X\times X\) there is \(R \in B\) such that \(\<x, y> \in R\). A homomorphism \(\Phi \colon \mc{S} \to \mc{B}_X\) of \((\mc{S}, *)\) is called \emph{transitive} if \(\Phi(\mc{S})\) is a transitive set of relations. A \emph{faithful representation} of a semigroup \((\mc{S}, *)\) by binary relations is a monomorphism \(\mc{S} \to \mc{B}_X\).

Solving a longstanding problem formulated in~\cite{Schein1963} Ralph McKenzie and Boris Schein prove that all semigroups have faithful transitive representations \cite{Mckenzie1997}. Like every outstanding result, the McKenzie-Shein theorem raises a series of related questions. According to this theorem, for every semigroup \((\mc{H}, *)\) there are a monomorphism \(\Phi \colon \mc{S} \to \mc{B}_{X}\) and a set \(A\) of generators of \(\mc{S}\) such that \(\Phi(A)\) is a cover of \(X^{2}\). What can be said about the properties of this cover? In particular, under what conditions is \(\Phi(A)\) a partition of \(X^{2}\)?

\begin{definition}\label{d1.1}
A monomorphism \(\Phi \colon \mc{S} \to \mc{B}_{X}\) is \(d\)-\emph{transitive} (\emph{disjoint-transitive}) if there is a set \(A\) of generators of \((\mc{S}, *)\) such that \(\{\Phi(a) \colon a \in A\}\) is a partition of the set \(X^{2}\) and, if \((\mc{S}, *)\) contains a zero element \(\theta\), the equality \(\Phi(\theta) = \varnothing\) holds.
\end{definition}

It is clear that every \(d\)-transitive monomorphism \(\mc{S} \to \mc{B}_{X}\) is a faithful and transitive representation of \((\mc{S}, *)\).

The following problem seems to be interesting and this is the main object of research in the paper.

\begin{problem}\label{pr2.25}
Describe the algebraic structure of semigroups \((\mc{H}, *)\) admitting \(d\)-transitive monomorphisms \(\mc{H} \to \mc{B}_{X}\).
\end{problem}

The paper is organized as follows.

In Section~2 we consider some basic partitions of Cartesian squares of sets and describe properties of these partitions.

The main results of the paper are formulated and proved in Section~3 and Section~\(4\). Theorem~\ref{ch2:th6}, Proposition~\ref{p3.11} and Theorem~\ref{t2.13} give us a ``purely algebraic'' description of some classes of semigroups \(\mc{H}\) admitting \(d\)-transitive monomorphisms \(\mc{H} \to \mc{B}_X\) for suitable \(X\).

Examples of semigroups \(\mc{H}\) which have no \(d\)-transitive monomorphisms \(\mc{H} \to \mc{B}_{X}\) are given in Proposition~\ref{p3.7} and Proposition~\ref{p4.14}.

\section{Partitions of Cartesian square}

Let \(X\) be a nonempty set and \(P = \{X_j \colon j \in J\}\) be a set of nonempty subsets of \(X\). The set \(P\) is a \emph{partition} of \(X\) if we have
\[
\bigcup_{j \in J} X_j = X \quad \text{and} \quad X_{j_1} \cap X_{j_2} = \varnothing
\]
for all distinct \(j_1\), \(j_2 \in J\). In what follows we will say that the sets \(X_j\), \(j \in J\) are the \emph{blocks} of \(P\).

We say that partitions \(P = \{X_j \colon j \in J\}\) and \(Q = \{X_i \colon i \in I\}\) of a set \(X\) are equal if and only if there is a bijective mapping \(f \colon J \to I\) such that \(X_j = X_{f(j)}\) holds for every \(j \in J\).

\begin{example}\label{ex1.3}
Let \(X\) and \(Y\) be a nonempty sets. If a mapping \(\Psi\colon X \to Y\) is surjective, then the set
\begin{equation}\label{e1.2}
P_{\Psi^{-1}} := \{\Psi^{-1}(y) \colon y \in Y\}
\end{equation}
is a partition of \(X\) with blocks \(\Psi^{-1}(y)\), \(y \in Y\). Conversely, if \(P = \{X_j \colon j \in J\}\) is a partition of \(X\), then the mapping \(F \colon X \to J\) defined by
\[
\bigl(F(x) = j\bigr) \Leftrightarrow \bigl(x \in X_j\bigr)
\]
is surjective and the equality \(P = P_{F^{-1}}\) holds.
\end{example}

Let \(X\) be a set. A binary relation \(R \subseteq X \times X\) is an \emph{equivalence relation} on \(X\) if the following conditions hold for all \(x\), \(y\), \(z \in X\):
\begin{enumerate}
\item \(\<x, x> \in R\), the \emph{reflexive} law;
\item \((\<x, y> \in R) \Leftrightarrow (\<y, x> \in R)\), the \emph{symmetric} law;
\item \(((\<x, y> \in R) \text{ and } (\<y, z> \in R)) \Rightarrow (\<x, z> \in R)\), the \emph{transitive} law.
\end{enumerate}

If \(R\) is an equivalence relation on \(X\), then an \emph{equivalence class} is defined as a subset \([a]_R\) of \(X\) having the form
\begin{equation}\label{e1.1}
[a]_R = \{x \in X \colon \<x, a> \in R\}, a \in X.
\end{equation}

There exists the well-known, one-to-one correspondence between the equivalence relations and the partitions (see, for example, \cite[Chapter~II, \S{}~5]{KurMost} or \cite[Proposition~1.4.6]{Howie2003}).

\begin{proposition}\label{p1.6}
Let \(X\) be a nonempty set. If \(P = \{X_j \colon j \in J\}\) is a partition of \(X\) and \(R_P\) is a binary relation on \(X\) such that, for every \(\<x, y> \in X \times X\),
\begin{equation*}
\bigl(\<x, y> \in R_P\bigr) \Leftrightarrow \bigl(\exists j \in J (x \in X_j \text{ and } y \in X_j)\bigr),
\end{equation*}
then \(R_P\) is an equivalence relation on \(X\) with equivalence classes \(X_j\). Conversely, if \(R\) is an equivalence relation on \(X\), then the set \(P_R\) of all distinct equivalence classes \([a]_R\) is a partition of \(X\) with blocks \([a]_R\).
\end{proposition}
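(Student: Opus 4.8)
The plan is to treat the two directions of the correspondence separately, each of which reduces to routine verification against the definitions. For the forward direction I would check that \(R_P\) satisfies the reflexive, symmetric and transitive laws and then identify its equivalence classes with the blocks \(X_j\). For the converse I would first record the standard auxiliary fact that two equivalence classes of \(R\) coincide exactly when their representatives are \(R\)-related, and then read off from it that \(P_R\) is a partition whose blocks are the classes \([a]_R\).

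For the relation \(R_P\): the reflexive law holds because \(\bigcup_{j\in J}X_j=X\), so every \(x\in X\) lies in some block \(X_j\), giving \(\langle x,x\rangle\in R_P\); the symmetric law is immediate since the defining condition ``\(\exists j\in J\,(x\in X_j\text{ and }y\in X_j)\)'' is symmetric in \(x\) and \(y\); and for the transitive law, if \(\langle x,y\rangle\in R_P\) and \(\langle y,z\rangle\in R_P\), pick \(j_1,j_2\in J\) with \(x,y\in X_{j_1}\) and \(y,z\in X_{j_2}\), note \(y\in X_{j_1}\cap X_{j_2}\neq\varnothing\), conclude \(j_1=j_2\) from the disjointness clause in the definition of a partition, and deduce \(x,z\in X_{j_1}\), i.e. \(\langle x,z\rangle\in R_P\). (This is the only place disjointness of the blocks is used.) To pin down the equivalence classes, fix \(a\in X\), let \(X_j\) be the unique block containing \(a\), and observe that for \(x\in X\) we have \(x\in[a]_{R_P}\) iff \(x\) shares a block with \(a\) iff \(x\in X_j\); hence \([a]_{R_P}=X_j\), and as \(a\) ranges over \(X\) every block occurs, so the family of equivalence classes of \(R_P\) is exactly \(P\).

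For the converse I would first prove: for all \(a,b\in X\), \([a]_R=[b]_R\) if and only if \(\langle a,b\rangle\in R\). If \([a]_R=[b]_R\), then \(a\in[a]_R=[b]_R\) by reflexivity, so \(\langle a,b\rangle\in R\); conversely, if \(\langle a,b\rangle\in R\) and \(x\in[a]_R\), then \(\langle x,a\rangle\in R\) and transitivity gives \(\langle x,b\rangle\in R\), so \([a]_R\subseteq[b]_R\), and the reverse inclusion follows symmetrically after applying the symmetric law to \(\langle a,b\rangle\). Given this, the three partition properties for \(P_R\) are immediate: each class is nonempty since \(a\in[a]_R\); the classes cover \(X\) since \(x\in[x]_R\) for every \(x\); and two distinct classes \([a]_R\), \([b]_R\) are disjoint, because a common point \(c\) would give \(\langle c,a\rangle\in R\) and \(\langle c,b\rangle\in R\), hence \(\langle a,b\rangle\in R\) by the symmetric and transitive laws, forcing \([a]_R=[b]_R\) by the auxiliary fact, a contradiction.

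I do not anticipate a genuine obstacle, as the argument is purely formal. The only points calling for a little care are bookkeeping ones: phrasing the identification in the forward direction so that ``with equivalence classes \(X_j\)'' accords with the paper's convention for equality of partitions (a bijection between the indexing of the \(X_j\) and the indexing of the classes of \(R_P\)), and keeping track of which property of \(R\) is responsible for each partition property in the converse — reflexivity for nonemptiness and for covering, and symmetry together with transitivity for pairwise disjointness.
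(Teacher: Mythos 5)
Your proof is correct and complete: the verification of the reflexive, symmetric and transitive laws for \(R_P\), the identification \([a]_{R_P}=X_j\), and the auxiliary fact that \([a]_R=[b]_R\) if and only if \(\<a,b>\in R\) together give exactly the standard argument. The paper itself offers no proof of Proposition~\ref{p1.6}, treating it as the well-known correspondence between equivalence relations and partitions and citing Kuratowski--Mostowski and Howie; your write-up is the same textbook proof one finds in those references, so there is nothing to add.
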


\begin{remark}\label{r3}
If \(X = \varnothing\), then we have \(X \times X = \varnothing\), so that \(\varnothing\) can be considered as a unique equivalence relation on \(\varnothing\). It should be noted that there is no partition of \(\varnothing\) because every block of each partition is nonempty by definition.
\end{remark}

In the following, we systematically use the notation \(R_P\) for the equivalence relation corresponding to partition \(P\) and the notation \(P_R\) for the partition corresponding to equivalence relation \(R\). In particular, for every nonempty set \(X\), Proposition~\ref{p1.6} implies the equality
\begin{equation}\label{e1.3}
P = P_{R_P}
\end{equation}
if \(P\) is a given partition of \(X\) and, respectively, the equality
\begin{equation}\label{e1.4}
R = R_{P_R}
\end{equation}
if \(R\) is a given equivalence relation on \(X\).

The trivial examples of equivalence relations on \(X\) are the Cartesian square \(X^{2}\) and the \emph{diagonal} \(\Delta_{X}\) of \(X\),
\[
\Delta_{X} := \{\<x, x> \colon x \in X\}.
\]
If \(X\) is nonempty, then we have
\[
P_{\Delta_{X}} = \{\{x\} \colon x \in X\}.
\]
Moreover, for a partition \(P = \{X_j \colon j \in J\}\) of \(X\), the equality \(P = P_{X^2}\) holds if and only if \(|J| = 1\).

In the following proposition starting from a partition \(P\) of a set \(X\) we define a partition \(P \otimes P\) of the Cartesian square \(X^{2}\).

\begin{proposition}\label{p1}
Let \(X\) be a nonempty set and \(P = \{X_j\colon j \in J\}\) be a partition of \(X\). Write
\begin{equation}\label{p1:e1}
P \otimes P := \{X_{j_1} \times X_{j_2} \colon \<j_1, j_2> \in J^2\},
\end{equation}
where \(X_{j_1} \times X_{j_2}\) is the Cartesian product of \(X_{j_1}\) and \(X_{j_2}\), and \(J^{2}\) is the Cartesian square of \(J\). Then \(P \otimes P\) is a partition of the Cartesian square \(X^{2}\) with blocks \(X_{j_1} \times X_{j_2}\), \(\<j_1, j_2> \in J^2\).
\end{proposition}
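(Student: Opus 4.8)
The plan is to verify the two defining properties of a partition for the family $P \otimes P = \{X_{j_1} \times X_{j_2} \colon \<j_1, j_2> \in J^2\}$: that each listed set is nonempty, that the union of all these sets is $X^2$, and that distinct blocks are disjoint. Nonemptiness is immediate since each $X_j$ is nonempty by the definition of a partition, hence each product $X_{j_1} \times X_{j_2}$ is nonempty.

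For the covering property, I would take an arbitrary $\<x, y> \in X^2$. Since $P$ is a partition of $X$, there is $j_1 \in J$ with $x \in X_{j_1}$ and there is $j_2 \in J$ with $y \in X_{j_2}$; then $\<x, y> \in X_{j_1} \times X_{j_2}$, so $X^2 \subseteq \bigcup_{\<j_1,j_2> \in J^2} X_{j_1} \times X_{j_2}$. The reverse inclusion holds because each $X_{j_1} \times X_{j_2} \subseteq X \times X = X^2$.

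For disjointness, suppose $\<j_1, j_2> \ne \<j_1', j_2'>$ in $J^2$, so that $j_1 \ne j_1'$ or $j_2 \ne j_2'$. If $\<x,y> \in (X_{j_1}\times X_{j_2}) \cap (X_{j_1'}\times X_{j_2'})$, then $x \in X_{j_1} \cap X_{j_1'}$ and $y \in X_{j_2} \cap X_{j_2'}$; in the first case $X_{j_1} \cap X_{j_1'} = \varnothing$ by the partition property of $P$, a contradiction, and the second case is symmetric. Hence the intersection is empty. One subtlety worth a remark: the indexing in~\eqref{p1:e1} should be understood so that blocks corresponding to distinct pairs $\<j_1,j_2>$ are counted as distinct elements of the partition (as allowed by the notion of equality of partitions introduced above), since it is a priori conceivable that $X_{j_1}\times X_{j_2} = X_{j_1'}\times X_{j_2'}$ as sets only when the pairs coincide — and indeed the disjointness argument shows this equality of sets forces the pairs to be equal, so no collision occurs.

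I do not anticipate a serious obstacle here; the only point requiring mild care is the bookkeeping of indices and invoking the equality-of-partitions convention, but the core is a direct two-line check using the definition of a partition of $X$.
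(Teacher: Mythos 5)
Your proof is correct: the paper explicitly omits the proof of this proposition as "simple," and your direct verification (nonempty blocks, covering of \(X^2\), pairwise disjointness via the disjointness of the \(X_j\)) is exactly the standard argument the author intends. The side remark that disjointness of distinct nonempty blocks rules out index collisions is a fine, if optional, touch of care.
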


The proof is simple and we omit it here.

\begin{example}\label{ex1.5}
Let \(X\) be a nonempty set and let
\begin{equation*}
P_{\Delta_{X}} = \{\{x\} \colon x \in X\}
\end{equation*}
be a partition of \(X\) corresponding to the diagonal \(\Delta_{X}\) on \(X\). Then \(P_{\Delta_{X}} \otimes P_{\Delta_{X}}\) is a partition of \(X^{2}\) corresponding to the diagonal on \(X^{2}\),
\[
P_{\Delta_{X}} \otimes P_{\Delta_{X}} = P_{\Delta_{X^2}} = \{\{\<x, y>\} \colon \<x, y> \in X^2\}.
\]
\end{example}

\begin{proposition}\label{p2}
Let \(X\) be a nonempty set. If \(R\) is an equivalence relation on \(X\) and \(P_R = \{X_j \colon j \in J\}\) is the corresponding partition of \(X\), then the equality
\begin{equation}\label{p2:e1}
R = \bigcup_{j \in J} X_j^2
\end{equation}
holds.
\end{proposition}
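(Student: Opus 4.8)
The plan is to prove the set equality \eqref{p2:e1} by establishing the two inclusions $R \subseteq \bigcup_{j \in J} X_j^2$ and $\bigcup_{j \in J} X_j^2 \subseteq R$, using only the definition of equivalence classes and Proposition~\ref{p1.6}.

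First I would recall that, by Proposition~\ref{p1.6}, the blocks $X_j$ of $P_R$ are exactly the distinct equivalence classes $[a]_R$, $a \in X$, and that \eqref{e1.4} gives $R = R_{P_R}$. Unfolding the characterization of $R_{P_R}$ furnished by Proposition~\ref{p1.6} applied to the partition $P = P_R$, we obtain, for every $\langle x, y\rangle \in X^2$, that $\langle x, y\rangle \in R$ holds if and only if there is $j \in J$ with $x \in X_j$ and $y \in X_j$. Since the conjunction $x \in X_j$ and $y \in X_j$ means precisely $\langle x, y\rangle \in X_j \times X_j = X_j^2$, this is the same as saying $\langle x, y\rangle \in R$ if and only if $\langle x, y\rangle \in \bigcup_{j \in J} X_j^2$, which is \eqref{p2:e1}. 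If one prefers to avoid citing \eqref{e1.4}, the two inclusions can be done by hand in the same spirit: for $\bigcup_{j \in J} X_j^2 \subseteq R$, a pair $\langle x, y\rangle \in X_j^2$ has $x$ and $y$ in the common class $X_j = [a]_R$, so $\langle x, a\rangle, \langle y, a\rangle \in R$ and the symmetric and transitive laws give $\langle x, y\rangle \in R$; conversely, if $\langle x, y\rangle \in R$ then $[x]_R = [y]_R$ by symmetry and transitivity, this class is one of the blocks $X_{j_0}$, and $x, y \in X_{j_0}$ yields $\langle x, y\rangle \in X_{j_0}^2$.

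There is no genuine obstacle: the statement is a reformulation of the standard equivalence-relation/partition correspondence already recorded in Proposition~\ref{p1.6}. The only point deserving a moment of care is the bookkeeping — one must use that $P_R$ consists of \emph{all} distinct equivalence classes, so that every $[y]_R$ really occurs as some block $X_j$; pairwise disjointness of the blocks is not needed for the equality itself, only the two containments are. I would therefore present the short derivation via Proposition~\ref{p1.6} and \eqref{e1.4} as the proof.
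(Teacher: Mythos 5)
Your proposal is correct and follows essentially the same route as the paper: both arguments reduce the equality to the characterization of \(R = R_{P_R}\) given by Proposition~\ref{p1.6}, checking the two inclusions (or, equivalently, the two directions of the ``if and only if''). The additional by-hand variant using the symmetric and transitive laws is a harmless elaboration of the same idea.
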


\begin{proof}
Let \(R\) be an equivalence relation on \(X\) and \(P_R = \{X_j \colon j \in J\}\). The set \(\bigcup_{j \in J} X_j^2\) is a subset of \(X^{2}\) and, consequently, it is a binary relation on \(X\). By Proposition~\ref{p1.6}, for every \(\<x_{1}, y_{1}> \in R\) there is \(j_1 \in J\) such that \(x_{1} \in X_{j_1}\) and \(y_{1} \in X_{j_1}\), i.e.,
\[
\<x_{1}, y_{1}> \in X_{j_1}^{2} \subseteq \bigcup_{j \in J} X_j^2.
\]
It implies the inclusion
\begin{equation}\label{p2:e2}
R \subseteq \bigcup_{j \in J} X_j^{2}.
\end{equation}

Now let \(\<x_{0}, y_{0}>\) be an arbitrary point of \(\bigcup_{j \in J} X_j^{2}\). Then there is \(j_0 \in J\) such that \(\<x_{0}, y_{0}> \in X_{j_0}^{2}\), which means \(x_{0} \in X_{j_0}\) and \(y_{0} \in X_{j_0}\). Since \(\{X_j \colon j \in J\}\) is the partition corresponding to \(R\), we have \(\<x_{0}, y_{0}> \in R\) by Proposition~\ref{p1.6}. Consequently, the inclusion
\begin{equation*}
\bigcup_{j \in J} X_j^{2} \subseteq R
\end{equation*}
holds. The last inclusion and~\eqref{p2:e2} imply~\eqref{p2:e1}.
\end{proof}

\begin{example}\label{ex2.7}
Let \(X\) be a nonempty set. Then the equality
\[
\Delta_{X} = \{\{x\}\times \{x\} \colon x \in X\}
\]
holds.
\end{example}

For every partition \(P = \{X_j \colon j \in J\}\) of a nonempty set \(X\) we define a partition \(P \otimes P^1\) of \(X^{2}\) as
\begin{equation}\label{e1.6}
P \otimes P^1 := \{R_P\} \cup \{X_{j_1}\times X_{j_2} \colon \<j_1, j_2> \in \nabla_{J}\},
\end{equation}
where \(\Delta_{J}\) is the diagonal of \(J\) and \(\nabla_{J} := J^2 \setminus \Delta_{J}\). We will also consider partitions \(P \otimes P_S\) and \(P \otimes P_S^1\) defined by the rules:
\begin{enumerate}
\item A subset \(B\) of \(X^{2}\) is a block of \(P \otimes P_S\) if and only if there are \(j_1\), \(j_2 \in J\) such that
\begin{equation}\label{e1.7}
B = (X_{j_1} \times X_{j_2}) \cup (X_{j_2} \times X_{j_1});
\end{equation}
\item A subset \(B\) of \(X^{2}\) is a block of \(P \otimes P_S^1\) if and only if either \(B = R_P\) or there are \emph{distinct} \(j_1\), \(j_2 \in J\) such that \eqref{e1.7} holds.
\end{enumerate}

\begin{figure}[htb]
\begin{tikzpicture}
\def\xx{0cm};
\def\yy{0cm};
\def\dx{1cm};
\def\dy{1cm};
\draw (\xx-0.1*\dx, \yy) -- (\xx, \yy) node [below left] {\(0\)} -- (\xx + \dx, \yy) node [below] {\(\frac{1}{3}\)} -- (\xx + 2*\dx, \yy) node [below] {\(\frac{2}{3}\)} -- (\xx+3*\dx, \yy) node [below] {\(1\)} -- (\xx+3.1*\dx, \yy);
\draw (\xx,\yy-0.1*\dy) -- (\xx,\yy+\dy) node [left] {\(\frac{1}{3}\)} -- (\xx,\yy+ 2*\dy) node [left] {\(\frac{2}{3}\)} -- (\xx, \yy+3*\dy) node [left] {\(1\)} -- (\xx, \yy+3.5*\dy);

\filldraw [fill=white, draw=black] (\xx, \yy) -- (\xx+\dx, \yy) -- (\xx+\dx, \yy+\dy) -- (\xx, \yy+\dy) -- cycle;
\filldraw [fill=orange, draw=black] (\xx+\dx, \yy) -- (\xx+2*\dx, \yy) -- (\xx+2*\dx, \yy+\dy) -- (\xx+\dx, \yy+\dy) -- cycle;
\filldraw [fill=green, draw=black] (\xx+2*\dx, \yy) -- (\xx+3*\dx, \yy) -- (\xx+3*\dx, \yy+\dy) -- (\xx+2*\dx, \yy+\dy)-- cycle;

\def\yy{\dy};
\filldraw [fill=orange, draw=black] (\xx, \yy) -- (\xx+\dx, \yy) -- (\xx+\dx, \yy+\dy) -- (\xx, \yy+\dy) -- cycle;
\filldraw [fill=white, draw=black] (\xx+\dx, \yy) -- (\xx+2*\dx, \yy) -- (\xx+2*\dx, \yy+\dy) -- (\xx+\dx, \yy+\dy) -- cycle;
\filldraw [fill=violet, draw=black] (\xx+2*\dx, \yy) -- (\xx+3*\dx, \yy) -- (\xx+3*\dx, \yy+\dy) -- (\xx+2*\dx, \yy+\dy) -- cycle;

\def\yy{2*\dy};
\filldraw [fill=green, draw=black] (\xx, \yy) -- (\xx+\dx, \yy) -- (\xx+\dx, \yy+\dy) -- (\xx, \yy+\dy) -- cycle;
\filldraw [fill=violet, draw=black] (\xx+\dx, \yy) -- (\xx+2*\dx, \yy) -- (\xx+2*\dx, \yy+\dy) -- (\xx+\dx, \yy+\dy) -- cycle;
\filldraw [fill=white, draw=black] (\xx+2*\dx, \yy) -- (\xx+3*\dx, \yy) -- (\xx+3*\dx, \yy+\dy) -- (\xx+2*\dx, \yy+\dy) -- cycle;
\end{tikzpicture}
\caption{The partition \(P \otimes P_S^{1}\) corresponding to trichotomy \(P = \{X_0, X_1, X_2\}\). Here \(R_P\) is white, \(R_1\) is orange, \(R_2\) is green and \(R_3\) is violet.}
\label{ch2:fig1}
\end{figure}
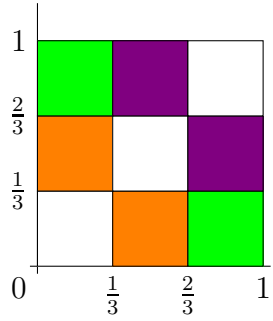

\begin{example}\label{ch2:ex11}
Let \(\RR\) be the field of real numbers and let
\begin{align*}
X & := \{x \in \RR \colon 0 \leqslant x \leqslant 1\}, & X_0 &:= \left\{x \in \RR \colon 0 \leqslant x \leqslant \frac{1}{3}\right\},\\
X_1 &:= \left\{x \in \RR \colon \frac{1}{3} < x < \frac{2}{3}\right\}, & X_2 &:= \left\{x \in \RR \colon \frac{2}{3} \leqslant x \leqslant 1\right\}.
\end{align*}
Then the trichotomy \(P = \{X_0, X_1, X_2\}\) is a partition of \(X\). Write
\begin{align*}
R_1 &:= (X_0 \times X_1) \cup (X_1 \times X_0), & R_2 &:= (X_0 \times X_2) \cup (X_2 \times X_0),\\
R_3 &:= (X_1 \times X_2) \cup (X_2 \times X_1), & R_P &:= \bigcup_{j=0}^2 X_j^2.
\end{align*}
Then \(P \otimes P_S^1 = \{R_P, R_1, R_2, R_3\}\) holds (see Figure~\ref{ch2:fig1}).
\end{example}

The partitions \(P \otimes P\), \(P \otimes P^1\), \(P \otimes P_S\) and \(P \otimes P_S^1\) can be characterized as the smallest elements of corresponding subsets of the partially ordered set of all partitions of \(X^{2}\).

\begin{definition}\label{d1.7}
Let \(X\) be a nonempty set and let \(P_1\) and \(P_2\) be partitions of \(X\). The partition \(P_{1}\) is \emph{finer} than the partition \(P_{2}\) if the inclusion
\[
[x]_{R_{P_1}} \subseteq [x]_{R_{P_2}}
\]
holds for every \(x \in X\), where \(R_{P_1}\) and \(R_{P_2}\) are equivalence relations corresponding to \(P_1\) and \(P_2\) respectively.
\end{definition}

If \(P_1\) is finer than \(P_2\), then we write \(P_{1} \leqslant_{X} P_{2}\) and say that \(P_{1}\) is a \emph{refinement} of \(P_{2}\).

\begin{remark}\label{r2.10}
Using Proposition~\ref{p2}, we see that if \(P_1\) and \(P_2\) are partitions of \(X\), then \(P_1\) is a refinement of \(P_2\) if and only if the inclusion \(R_{P_1} \subseteq R_{P_2}\) holds.
\end{remark}

\begin{example}\label{ex1.8}
Let \(X\) be a nonempty set and let \(P\) be a partition of \(X\). Then
\begin{gather*}
P \otimes P \leqslant_{X^{2}} P \otimes P^{1} \leqslant_{X^{2}} P \otimes P_{S}^{1}\\
\intertext{and}
P \otimes P \leqslant_{X^{2}} P \otimes P_{S} \leqslant_{X^{2}} P \otimes P_{S}^{1}
\end{gather*}
hold.
\end{example}

Recall that a reflexive and transitive binary relation \(\preccurlyeq\) on a set \(Y\) is a \emph{partial order} on \(Y\) if the following \emph{antisymmetric law}
\[
\bigl(\<x, y> \in \preccurlyeq \text{ and } \<y, x> \in \preccurlyeq \bigr) \Rightarrow (x = y)
\]
holds for all \(x\), \(y \in Y\).

In what follows, for partial order \(\preccurlyeq\), we write \(x \preccurlyeq y\) instead of \(\<x, y> \in \preccurlyeq\).

The following proposition is well-known (see, for example, {\cite[Example~4, \S{} 9 Chapter~2]{KurMost}}) and directly follows from Remark~\ref{r2.10}.

\begin{proposition}\label{p1.9}
Let \(X\) be a nonempty set and let \(\mathbf{\Pi}(X)\) be the set of all partitions of \(X\). Then the binary relation ``to be finer than'' is a partial order on \(\mathbf{\Pi}(X)\).
\end{proposition}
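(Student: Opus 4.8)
The plan is to reduce everything to the observation recorded in Remark~\ref{r2.10}: for partitions \(P_1\) and \(P_2\) of \(X\) one has \(P_1 \leqslant_X P_2\) if and only if \(R_{P_1} \subseteq R_{P_2}\). In other words, the relation ``to be finer than'' on \(\mathbf{\Pi}(X)\) is obtained by transporting the inclusion order on the set of all equivalence relations on \(X\) along the correspondence \(P \mapsto R_P\). Since \(\subseteq\) is a partial order on any family of sets, it then suffices to verify reflexivity, transitivity and antisymmetry of \(\leqslant_X\) working with the relations \(R_{P}\) instead of the partitions themselves.

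Reflexivity is immediate, as \(R_P \subseteq R_P\) gives \(P \leqslant_X P\) for every partition \(P\). Transitivity is inherited from transitivity of \(\subseteq\): if \(P_1 \leqslant_X P_2\) and \(P_2 \leqslant_X P_3\), then \(R_{P_1} \subseteq R_{P_2}\) and \(R_{P_2} \subseteq R_{P_3}\), hence \(R_{P_1} \subseteq R_{P_3}\) and so \(P_1 \leqslant_X P_3\) again by Remark~\ref{r2.10}. The one point I would treat with a little care is antisymmetry, since there one must match the set-theoretic equality \(R_{P_1} = R_{P_2}\) with the notion of equality of partitions introduced after Proposition~\ref{p1.6} (existence of a bijection between the index sets respecting the blocks). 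Assuming \(P_1 \leqslant_X P_2\) and \(P_2 \leqslant_X P_1\), antisymmetry of \(\subseteq\) yields \(R_{P_1} = R_{P_2}\) as subsets of \(X^2\); applying~\eqref{e1.3} to \(P_1\) and to \(P_2\) then gives
\[
P_1 = P_{R_{P_1}} = P_{R_{P_2}} = P_2,
\]
where \(P_{R_{P_1}}\) and \(P_{R_{P_2}}\) are the (literally identical) sets of all distinct equivalence classes of \(R_{P_1} = R_{P_2}\). Thus \(P_1\) and \(P_2\) coincide as sets, and a fortiori as partitions, which is the antisymmetric law.

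I do not expect any genuine obstacle here: the proof is a routine transfer of the order-theoretic properties of \(\subseteq\) through the partition--equivalence dictionary established in Proposition~\ref{p1.6} and formulas~\eqref{e1.3}--\eqref{e1.4}, the only mildly delicate moment being the bookkeeping in the antisymmetry step to confirm that equality of the associated equivalence relations is exactly equality of partitions in the sense fixed earlier in Section~2. (Alternatively, one could unwind Definition~\ref{d1.7} directly in terms of the classes \([x]_{R_{P_i}}\), but the route via Remark~\ref{r2.10} is shorter and is the one I would present.)
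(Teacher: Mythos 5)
Your proof is correct and follows exactly the route the paper indicates: the paper gives no written proof beyond observing that the statement ``directly follows from Remark~\ref{r2.10}'', and your argument is precisely the transfer of reflexivity, transitivity and antisymmetry of \(\subseteq\) through \(P \mapsto R_P\) together with~\eqref{e1.3}, with the antisymmetry bookkeeping handled appropriately.
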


\begin{remark}\label{r1.10}
The partially ordered set \((\mathbf{\Pi}(X), \leqslant_{X})\) of all partitions of \(X\) is a complete lattice (see Section~13.3 of~\cite{Andrews1984}). The theory of lattices of partitions of a given set was developed by Oystein Ore in~\cite{Ore1942}. In particular, Ore has characterized the lattices which are isomorphic to lattice \((\mathbf{\Pi}(X), \leqslant_{X})\) for some set \(X\). It was shown by Philip~M.~Whitman~\cite{Whitman1946} that every lattice is isomorphic to a sublattice of \((\mathbf{\Pi}(X), \leqslant_{X})\) for a suitable \(X\).
\end{remark}

The following lemma is straightforward.

\begin{lemma}\label{l1.11}
Let \(X\) be a nonempty set and let \(P_1\), \(P_2 \in \mathbf{\Pi}(X)\). Then the following statements are equivalent.
\begin{enumerate}
\item\label{l1.11:s1} \(P_1 \leqslant_X P_2\).
\item\label{l1.11:s2} If \(\Psi_1 \colon X \to Y_1\) and \(\Psi_2 \colon X \to Y_2\) are surjective mappings such that
\[
P_1 = P_{\Psi_{1}^{-1}} \quad \text{and} \quad P_2 = P_{\Psi_{2}^{-1}}
\]
(see~\eqref{e1.2}), then there is a surjective mapping \(\Phi \colon Y_1 \to Y_2\) for which
\[
\Psi_2(x) = \Phi(\Psi_1(x))
\]
holds for every \(x \in X\).
\item\label{l1.11:s3} Each block of \(P_2\) is a union of some blocks of \(P_1\).
\end{enumerate}
\end{lemma}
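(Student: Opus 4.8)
The plan is to prove the three-way equivalence by a cycle of implications
\(\eqref{l1.11:s1} \Rightarrow \eqref{l1.11:s3} \Rightarrow \eqref{l1.11:s2} \Rightarrow \eqref{l1.11:s1}\),
using the dictionary between partitions, equivalence relations, and surjections furnished by Proposition~\ref{p1.6}, Example~\ref{ex1.3}, and Remark~\ref{r2.10}.

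First I would establish \(\eqref{l1.11:s1} \Rightarrow \eqref{l1.11:s3}\). Write \(P_1 = \{A_i \colon i \in I\}\) and \(P_2 = \{B_j \colon j \in J\}\). Fix a block \(B_j\). For each \(x \in B_j\) we have, by Definition~\ref{d1.7}, \([x]_{R_{P_1}} \subseteq [x]_{R_{P_2}} = B_j\); since \([x]_{R_{P_1}}\) is exactly the block of \(P_1\) containing \(x\), this shows \(B_j\) is the union of those blocks of \(P_1\) that it meets, and each such block lies entirely inside \(B_j\). Hence \(B_j\) is a union of blocks of \(P_1\).

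Next, \(\eqref{l1.11:s3} \Rightarrow \eqref{l1.11:s2}\). Suppose \(\Psi_1 \colon X \to Y_1\) and \(\Psi_2 \colon X \to Y_2\) are surjections with \(P_1 = P_{\Psi_1^{-1}}\), \(P_2 = P_{\Psi_2^{-1}}\); so the fibres \(\Psi_1^{-1}(y)\), \(y \in Y_1\), are the blocks of \(P_1\), and similarly for \(\Psi_2\). Given \(y \in Y_1\), the fibre \(\Psi_1^{-1}(y)\) is a block of \(P_1\), hence by \eqref{l1.11:s3} it is contained in a single block of \(P_2\), i.e.\ in a single fibre \(\Psi_2^{-1}(y')\); define \(\Phi(y) := y'\). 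This \(\Phi \colon Y_1 \to Y_2\) is well defined (blocks of \(P_1\) are nonempty and the containing block of \(P_2\) is unique because distinct blocks are disjoint), it satisfies \(\Psi_2(x) = \Phi(\Psi_1(x))\) by construction for every \(x \in X\), and it is surjective because \(\Psi_2\) is: given \(z \in Y_2\), pick \(x\) with \(\Psi_2(x) = z\), then \(\Phi(\Psi_1(x)) = z\).

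Finally, \(\eqref{l1.11:s2} \Rightarrow \eqref{l1.11:s1}\). By Example~\ref{ex1.3} we may always choose \emph{some} surjections representing \(P_1\) and \(P_2\) — e.g.\ the canonical maps onto the index sets — so the hypothesis produces a surjection \(\Phi\) with \(\Psi_2 = \Phi \circ \Psi_1\). If \(\<x,y> \in R_{P_1}\), then \(\Psi_1(x) = \Psi_1(y)\) (two elements lie in the same block of \(P_1 = P_{\Psi_1^{-1}}\) iff they have the same \(\Psi_1\)-image), whence \(\Psi_2(x) = \Phi(\Psi_1(x)) = \Phi(\Psi_1(y)) = \Psi_2(y)\), i.e.\ \(\<x,y> \in R_{P_2}\). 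Thus \(R_{P_1} \subseteq R_{P_2}\), and Remark~\ref{r2.10} gives \(P_1 \leqslant_X P_2\).

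The only delicate point — and the place I would be most careful — is the logical structure of statement \eqref{l1.11:s2}: it is phrased as ``for all surjections \(\Psi_1, \Psi_2\) representing \(P_1, P_2\), there exists \(\Phi\)''. In the implication \(\eqref{l1.11:s3} \Rightarrow \eqref{l1.11:s2}\) one must handle arbitrary such \(\Psi_1, \Psi_2\), which is exactly what the fibre argument above does; in the implication \(\eqref{l1.11:s2} \Rightarrow \eqref{l1.11:s1}\) one only needs one convenient choice, guaranteed by Example~\ref{ex1.3}. Keeping the quantifiers straight is the whole content; everything else is the routine partition/surjection correspondence, so I would not belabor it.
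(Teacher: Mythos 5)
Your proof is correct. The paper offers no argument of its own here --- it simply declares the lemma ``straightforward'' and omits the proof --- and your cycle \(\ref{l1.11:s1}\Rightarrow\ref{l1.11:s3}\Rightarrow\ref{l1.11:s2}\Rightarrow\ref{l1.11:s1}\), resting on Proposition~\ref{p1.6}, Example~\ref{ex1.3} and Remark~\ref{r2.10}, is exactly the standard argument the author is alluding to, with the quantifier structure of \ref{l1.11:s2} handled correctly.
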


\begin{definition}\label{d1.12}
Let \(X\) be a nonempty set, let \(R\) be an equivalence relation on \(X\) and let \(\Phi\) be a mapping with domain \(X^2\). The mapping \(\Phi\) is \(R\)-\emph{coherent} if the implication
\begin{equation}\label{d1.12:e1}
\bigl(\<x_1, x_3> \in R \text{ and } \<x_2, x_4> \in R\bigr) \Rightarrow \bigl(\Phi(x_1, x_2) = \Phi(x_3, x_4)\bigr)
\end{equation}
is valid for all \(x_1\), \(x_2\), \(x_3\), \(x_4 \in X\).
\end{definition}

For a given set \(X\) and a given equivalence relation \(R\) on \(X\), we denote by \(\Coh(R)\) the class of all surjective, \(R\)-coherent mappings with domain \(X^{2}\).

Now we recall a notion of the \emph{smallest element} of a subset of a partially ordered set. Let \((Y, \preccurlyeq)\) be a partially ordered set and let \(A \subseteq Y\). An element \(a^* \in A\) is called to be the smallest element of \(A\) if
\[
a^* \preccurlyeq a
\]
holds for every \(a \in A\). It is easy to see that the smallest element of \(A\) is unique if it exists.

The following simple theorem characterizes \(P \otimes P\) as the smallest element of the set of all partitions of \(X^{2}\) generated by mappings belonging to \(\Coh(R_P)\).

\begin{theorem}\label{t1.13}
Let \(X\) be a nonempty set and let \(Q = \{X_j \colon j \in J\}\) be a partition of \(X\). Then the inequality
\begin{equation}\label{t1.13:e1}
Q \otimes Q \leqslant_{X^{2}} P_{F^{-1}}
\end{equation}
holds for every \(F \in \Coh(R_{Q})\) and, moreover, there is \(F_{0} \in \Coh(R_Q)\) such that the equality
\[
Q \otimes Q = P_{F_{0}^{-1}}
\]
holds.
\end{theorem}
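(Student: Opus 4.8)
The plan is to prove the two assertions separately, using the characterization of refinement from Lemma~\ref{l1.11} for the inequality and an explicit construction of \(F_0\) for the equality.

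For the inequality~\eqref{t1.13:e1} I would fix an arbitrary \(F \in \Coh(R_Q)\), say \(F \colon X^2 \to Y\), and show that every block of \(P_{F^{-1}}\) is a union of blocks of \(Q \otimes Q\); by Lemma~\ref{l1.11} this is equivalent to \(Q \otimes Q \leqslant_{X^2} P_{F^{-1}}\). Concretely, I would take a point \(\<x_1, x_2> \in X^2\), let \(j_1, j_2 \in J\) be the unique indices with \(x_1 \in X_{j_1}\) and \(x_2 \in X_{j_2}\) (so that \(X_{j_1} \times X_{j_2}\) is the block of \(Q \otimes Q\) containing \(\<x_1, x_2>\)), and check the inclusion \(X_{j_1} \times X_{j_2} \subseteq F^{-1}(F(x_1, x_2))\). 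This is the only place where \(R_Q\)-coherence is used: for any \(\<x_3, x_4> \in X_{j_1} \times X_{j_2}\) the points \(x_1, x_3\) lie in the common block \(X_{j_1}\) and \(x_2, x_4\) lie in the common block \(X_{j_2}\), hence \(\<x_1, x_3> \in R_Q\) and \(\<x_2, x_4> \in R_Q\) by Proposition~\ref{p1.6}, and then~\eqref{d1.12:e1} gives \(F(x_1, x_2) = F(x_3, x_4)\), i.e. \(\<x_3, x_4> \in F^{-1}(F(x_1, x_2))\). Since every block of \(P_{F^{-1}}\) is the \(F\)-preimage of a single point of \(Y\) and \(\<x_1, x_2>\) was arbitrary, each such block is a union of blocks \(X_{j_1} \times X_{j_2}\) of \(Q \otimes Q\), which is what we wanted.

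For the equality I would exhibit \(F_0\) directly. Let \(\pi \colon X \to J\) be the surjection from Example~\ref{ex1.3} determined by \(\pi(x) = j \Leftrightarrow x \in X_j\), and define \(F_0 \colon X^2 \to J^2\) by \(F_0(x_1, x_2) := \<\pi(x_1), \pi(x_2)>\). Then \(F_0\) is surjective because \(\pi\) is, and \(F_0 \in \Coh(R_Q)\): if \(\<x_1, x_3> \in R_Q\) and \(\<x_2, x_4> \in R_Q\), then \(\pi(x_1) = \pi(x_3)\) and \(\pi(x_2) = \pi(x_4)\), whence \(F_0(x_1, x_2) = F_0(x_3, x_4)\). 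Finally, unwinding definitions, \(F_0^{-1}(\<j_1, j_2>) = \{\<x_1, x_2> \colon x_1 \in X_{j_1},\ x_2 \in X_{j_2}\} = X_{j_1} \times X_{j_2}\) for each \(\<j_1, j_2> \in J^2\), so that \(P_{F_0^{-1}} = \{X_{j_1} \times X_{j_2} \colon \<j_1, j_2> \in J^2\} = Q \otimes Q\) by~\eqref{p1:e1}.

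There is essentially no hard step here; the whole argument is a routine unwinding of the definitions of \(R\)-coherence, of \(P_{F^{-1}}\), and of \(\otimes\). The one point worth a line of care is that the assignment \(\<j_1, j_2> \mapsto X_{j_1} \times X_{j_2}\) is a bijection of \(J^2\) onto the set of blocks of \(Q \otimes Q\) — this holds because each \(X_j\) is nonempty and \(Q\) is a partition — so that the identity \(P_{F_0^{-1}} = Q \otimes Q\) is genuinely an equality of partitions in the sense fixed in Section~2.
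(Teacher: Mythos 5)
Your proposal is correct and follows essentially the same route as the paper: the inequality is reduced to the inclusion \(X_{j_1} \times X_{j_2} \subseteq F^{-1}(F(x_1, x_2))\) and settled by \(R_Q\)-coherence, and the witness \(F_0(x_1,x_2) = \<\pi(x_1), \pi(x_2)>\) is exactly the paper's map~\eqref{t1.13:e2} in different notation. The only cosmetic difference is that you route the refinement criterion through Lemma~\ref{l1.11}\ref{l1.11:s3} rather than stating the block-inclusion condition directly.
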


\begin{proof}
Let \(F \in \Coh(R_Q)\). Inequality~\eqref{t1.13:e1} holds if and only if we have
\begin{equation}\label{t1.13:e3}
X_{j_1} \times X_{j_2} \subseteq F^{-1}(F(x_1, x_2))
\end{equation}
for every \(\<x_1, x_2> \in X \times X\), where \(X_{j_1} \times X_{j_2}\) is a block of \(Q \otimes Q\) such that \(\<x_1, x_2> \in X_{j_1} \times X_{j_2}\). Suppose \(\<x_3, x_4>\) is an arbitrary point of \(X_{j_1} \times X_{j_2}\). Then we have
\[
x_1, x_3 \in X_{j_1} \quad \text{and} \quad x_2, x_4 \in X_{j_2},
\]
that implies \(\<x_1, x_3> \in R_Q\) and \(\<x_2, x_4> \in R_P\). Since \(F\) is \(R_Q\)-coherent, the equality \(F(x_1, x_2) = F(x_3, x_4)\) holds. Thus, we have
\[
\<x_3, x_4> \in F^{-1}(F(x_1, x_2)).
\]
Inclusion~\eqref{t1.13:e3} follows.

Let us consider \(F_{0} \colon X^2 \to J^2\) such that
\begin{equation}\label{t1.13:e2}
\bigl(F_{0}(x, y) = \<j_1, j_2>\bigr) \Leftrightarrow \bigl(x \in X_{j_1} \text{ and } y \in X_{j_2}\bigr)
\end{equation}
is valid for every \(\<x, y> \in X^{2}\). Let \(x_1\), \(x_2\), \(x_3\), \(x_4\) belong to \(X\). If \(\<x_1, x_3> \in R_Q\) and \(\<x_2, x_4> \in R_Q\) then, by Proposition~\ref{p2}, there are \(j_1 \in J\) and \(j_2 \in J\) such that
\[
x_1, x_3 \in X_{j_1} \quad \text{and} \quad x_2, x_4 \in X_{j_2}.
\]
Using these membership relations and~\eqref{t1.13:e2} we obtain
\[
F_{0}(x_1, x_2) = \<j_1, j_2> = F_{0}(x_3, x_4).
\]
Hence, \(F_{0} \in \Coh(R_Q)\) holds. It follows from~\eqref{t1.13:e2} that
\[
Q \otimes Q = \{F_{0}^{-1}(\<j_1, j_2>) \colon \<j_1, j_2> \in J^2\}. \qedhere
\]
\end{proof}

\begin{example}\label{ex1.16}
Let \(X\) be a nonempty set and let
\[
P = P_{\Delta_{X}} = \{\{x\} \colon x \in X\}
\]
be a partition of \(X\) corresponding to the diagonal \(\Delta_{X}\) on \(X\). Then \(P \otimes P\) is a partition corresponding to the diagonal \(\Delta_{X^{2}}\) on \(X^{2}\) (see Example~\ref{ex1.5}). Hence, the inequality
\begin{equation}\label{ex1.16:e1}
P \otimes P \leqslant_{X^{2}} Q
\end{equation}
holds for every partition \(Q \in \mathbf{\Pi}(X^{2})\) and, consequently, every mapping with domain \(X^{2}\) is \(\Delta_{X}\)-coherent.
\end{example}

Recall that for every nonempty \(A \subseteq \mc{B}_{X}\) we write \(\mc{S}_{A}\) for the subsemigroup of \((\mc{B}_{X}, \circ)\) having \(A\) as a set of generators.

\begin{proposition}\label{p2.26}
If a semigroup \((\mc{H}, *)\) admits a \(d\)-transitive monomorphism \(\mc{H} \to \mc{B}_{X}\), then there are an equivalence relation \(R\) on \(X\) and a mapping \(\Phi \in \Coh(R)\) such that \((\mc{H}, *)\) and \(\mc{S}_{A}\) are isomorphic with \(A = P_{\Phi^{-1}}\).
\end{proposition}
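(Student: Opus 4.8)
The plan is to unwind the definitions and exploit the observation from Example~\ref{ex1.16} that \emph{every} surjective mapping with domain $X^2$ is $\Delta_X$-coherent; this lets us take $R$ to be the diagonal of $X$, after which the proposition becomes an essentially formal statement about generators of homomorphic images.

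First I would fix a $d$-transitive monomorphism $\Psi\colon\mc{H}\to\mc{B}_X$ together with a set $A'$ of generators of $(\mc{H},*)$ witnessing Definition~\ref{d1.1}, so that $P:=\{\Psi(a)\colon a\in A'\}$ is a partition of $X^{2}$. In particular $X\neq\varnothing$ (there is no partition of $\varnothing$, see Remark~\ref{r3}) and $\varnothing\notin P$, so that if $\mc{H}$ has a zero $\theta$, the hypothesis $\Psi(\theta)=\varnothing$ is consistent with $P$ being a partition and merely entails $\theta\notin A'$.

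Second, I would check the purely algebraic identity $\Psi(\mc{H})=\mc{S}_{P}$. The inclusion $\mc{S}_P\subseteq\Psi(\mc{H})$ holds because $\Psi(\mc{H})$ is a subsemigroup of $\mc{B}_X$ (as $\Psi(h_1)\circ\Psi(h_2)=\Psi(h_1*h_2)$) containing $P=\Psi(A')$, while the reverse inclusion holds because any $h\in\mc{H}$ equals $a_1*\cdots*a_k$ with all $a_i\in A'$, whence $\Psi(h)=\Psi(a_1)\circ\cdots\circ\Psi(a_k)\in\mc{S}_P$. Since $\Psi$ is injective, it restricts to an isomorphism of $(\mc{H},*)$ onto $\Psi(\mc{H})=\mc{S}_P$.

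Finally I would exhibit $R$ and $\Phi$. Take $R:=\Delta_X$; by Example~\ref{ex1.16} the class $\Coh(\Delta_X)$ consists of all surjective mappings with domain $X^{2}$. Writing $P=\{X'_j\colon j\in J\}$, the canonical surjection $\Phi\colon X^{2}\to J$ defined by $(\Phi(\<x,y>)=j)\Leftrightarrow(\<x,y>\in X'_j)$ satisfies $P_{\Phi^{-1}}=P$ by Example~\ref{ex1.3} applied to the partition $P$ of $X^{2}$; hence $\Phi\in\Coh(\Delta_X)$, and with $A:=P_{\Phi^{-1}}=P$ we obtain $\mc{S}_A=\mc{S}_P\cong\mc{H}$, as required. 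I do not expect a genuine obstacle here; the only points needing care are the bookkeeping around the zero element and the explicit appeal to the automatic $\Delta_X$-coherence which trivialises the choice of $R$ — the substantive content, namely that $d$-transitivity forces a generating set onto a partition of $X^{2}$, is already packaged into the hypothesis.
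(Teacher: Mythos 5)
Your proposal is correct and follows essentially the same route as the paper: both arguments reduce to the observation of Example~\ref{ex1.16} that the discrete partition \(P_{\Delta_X}\) makes every surjection with domain \(X^{2}\) automatically \(\Delta_X\)-coherent, so one may take \(R=\Delta_X\) and \(\Phi\) the canonical surjection associated with the partition \(\{\Psi(a)\colon a\in A'\}\). Your explicit verification that \(\Psi(\mc{H})=\mc{S}_P\) is a detail the paper leaves implicit, but it is the same proof.
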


\begin{proof}
Let \(F \colon \mc{H} \to \mc{B}_{X}\) be a \(d\)-transitive monomorphism of \((\mc{H}, *)\) and let \(A\) be a set of generators of \(\mc{H}\) such that
\[
Q := \{F(a) \colon a \in A\}
\]
is a partition of \(X^{2}\). As in Example~\ref{ex1.16}, we see that the inequality
\[
P \otimes P \leqslant_{X^{2}} Q
\]
holds with \(P = \{\{x\} \colon x \in X\}\). Now the proposition follows from Theorem~\ref{t1.13}.
\end{proof}

Let \(X\) be a nonempty set and let \(R\) be an equivalence relation on \(X\). Let us denote by \(\Coh^{1}(R)\) a subclass of mappings of the class \(\Coh(R)\) such that \(\Phi \in \Coh^{1}(R)\) if and only if \(\Phi \in \Coh(R)\) and
\begin{equation*}
\Phi(x, x) = \Phi(y, y) \text{ holds for all } x, y \in X.
\end{equation*}

Analogously to Theorem~\ref{t1.13} we can characterize \(P \otimes P^{1}\) as the smallest element of the set of all partitions of \(X^{2}\) generated by mappings belonging to \(\Coh^{1}(R_P)\).

\begin{theorem}\label{t1.15}
Let \(X\) be a nonempty set and let \(Q = \{X_j \colon j \in J\}\) be a partition of \(X\). Then the inequality
\begin{equation}\label{t1.15:e1}
Q \otimes Q^{1} \leqslant_{X^{2}} P_{F^{-1}}
\end{equation}
holds for every \(F \in \Coh^{1}(R_Q)\) and, moreover, there is \(F_1 \in \Coh^{1}(R_Q)\) such that the equality
\begin{equation}\label{t1.15:e2}
Q \otimes Q^{1} = P_{F_1^{-1}}
\end{equation}
holds.
\end{theorem}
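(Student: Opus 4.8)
The plan is to follow the scheme of the proof of Theorem~\ref{t1.13}, the one genuinely new point being the block $R_Q$ of $Q\otimes Q^1$, which is exactly where the extra requirement built into $\Coh^1(R_Q)$ comes into play. Recall that the blocks of $Q\otimes Q^1$ are the sets $X_{j_1}\times X_{j_2}$ with $\<j_1,j_2>\in\nabla_J$ together with the single block $R_Q=\bigcup_{j\in J}X_j^2$ (the last equality by Proposition~\ref{p2}). By Lemma~\ref{l1.11}, the inequality \eqref{t1.15:e1} is equivalent to the assertion that every block of $Q\otimes Q^1$ is contained in a single block of $P_{F^{-1}}$, i.e.\ that $F$ is constant on each block of $Q\otimes Q^1$; so the first half of the proof reduces to checking this for the two kinds of blocks.

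First I would fix $F\in\Coh^1(R_Q)$ and verify this constancy. For a block $X_{j_1}\times X_{j_2}$ with $j_1\ne j_2$ the argument is the one from Theorem~\ref{t1.13}: if $\<x_1,x_2>$ and $\<x_3,x_4>$ lie in $X_{j_1}\times X_{j_2}$, then $\<x_1,x_3>\in R_Q$ and $\<x_2,x_4>\in R_Q$, hence $F(x_1,x_2)=F(x_3,x_4)$ by $R_Q$-coherence. For the block $R_Q$, take $\<x_1,x_2>,\<x_3,x_4>\in R_Q$; then $x_1,x_2$ belong to one block $X_j$ and $x_3,x_4$ to one block $X_k$ of $Q$. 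Since $\<x_1,x_1>\in R_Q$ and $\<x_2,x_1>\in R_Q$, coherence gives $F(x_1,x_2)=F(x_1,x_1)$, and likewise $F(x_3,x_4)=F(x_3,x_3)$; as $F\in\Coh^1(R_Q)$ we have $F(x_1,x_1)=F(x_3,x_3)$, so $F(x_1,x_2)=F(x_3,x_4)$. Thus $F$ is constant on $R_Q$ as well, and \eqref{t1.15:e1} follows from Lemma~\ref{l1.11}.

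Second I would produce $F_1$. I would take $F_1\colon X^2\to Q\otimes Q^1$ to be the canonical map sending each $\<x,y>\in X^2$ to the unique block of $Q\otimes Q^1$ containing it; by Example~\ref{ex1.3} this $F_1$ is surjective and satisfies $P_{F_1^{-1}}=Q\otimes Q^1$, which is precisely \eqref{t1.15:e2}, so only $F_1\in\Coh^1(R_Q)$ remains. For $R_Q$-coherence, suppose $\<x_1,x_3>\in R_Q$ and $\<x_2,x_4>\in R_Q$, say $x_1,x_3\in X_{j_1}$ and $x_2,x_4\in X_{j_2}$: if $j_1\ne j_2$ then both $\<x_1,x_2>$ and $\<x_3,x_4>$ lie in the block $X_{j_1}\times X_{j_2}$, while if $j_1=j_2$ then both lie in $X_{j_1}^2\subseteq R_Q$, so in either case $F_1(x_1,x_2)=F_1(x_3,x_4)$. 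Finally, for every $x\in X$, writing $X_j$ for the block of $Q$ containing $x$, we have $\<x,x>\in X_j^2\subseteq R_Q$, hence $F_1(x,x)=R_Q$ regardless of $x$, which is the defining identity of $\Coh^1(R_Q)$. (The degenerate case $|J|=1$, where $\nabla_J=\varnothing$ and $Q\otimes Q^1=\{X^2\}$, is covered by this argument without change.) The only subtle point in the whole proof is the treatment of the block $R_Q$ in the first half: there $R_Q$-coherence by itself does not force $F$ to be constant on $R_Q$, and it is essential to invoke the supplementary condition $\Phi(x,x)=\Phi(y,y)$ that distinguishes $\Coh^1(R_Q)$ from $\Coh(R_Q)$.
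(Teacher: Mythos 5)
Your proof is correct and follows essentially the same route as the paper: the paper handles inequality \eqref{t1.15:e1} by the argument of Theorem~\ref{t1.13} (your treatment of the block \(R_Q\) via \(F(x_1,x_2)=F(x_1,x_1)\) and the extra condition of \(\Coh^1\) is exactly the detail the paper leaves implicit), and its witness \(F_1=\Psi\circ F_0\), which collapses the diagonal of \(J^2\) to a single point, is the same canonical block map you construct, up to relabelling the codomain.
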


\begin{proof}
Arguing as in the proof of inequality~\eqref{t1.13:e1}, it is easy to make sure that \eqref{t1.15:e1} is true for every \(F \in \Coh^{1}(R_{Q})\).

Write
\begin{equation}\label{t1.15:e4}
\nabla_J := J^{2} \setminus \Delta_{J} \quad \text{and} \quad J^{2, 1} :=  \nabla_J \cup \{\Delta_J\},
\end{equation}
i.e., the diagonal \(\Delta_J\) is deleted from the Cartesian square \(J^{2}\) and the single-point set \(\{\Delta_J\}\) is added to the set--theoretic difference of \(J^{2}\) and \(\Delta_J\). Let us consider \(\Psi \colon J^{2} \to J^{2, 1}\) such that
\[
\Psi\bigl(\<j_1, j_2>\bigr) =
\begin{cases}
\<j_1, j_2>, & \text{if } j_1 \neq j_2\\
\Delta_J, & \text{if } j_1 = j_2
\end{cases}
\]
and let \(F_1\) denote the composition
\[
X^{2} \xrightarrow{F_0} J^{2} \xrightarrow{\Psi} J^{2, 1},
\]
where \(F_0 \colon X^{2} \to J^{2}\) is defined by~\eqref{t1.13:e2}. Then \(F_1\) belongs to \(\Coh^{1}(R_Q)\) and~\eqref{t1.15:e2} holds.
\end{proof}

Let \(X\) be a nonempty set and let \(Q\) be a partition of the set \(X^{2}\), \(Q \in \mathbf{\Pi}(X^{2})\). We say that \(Q\) is \emph{symmetric} if the equivalence
\[
\bigl(\<x, y> \in B\bigr) \Leftrightarrow \bigl(\<y, x> \in B\bigr)
\]
is valid for each block \(B\) of \(Q\) and every \(\<x, y> \in X^{2}\). Thus \(Q\) is a symmetric partition of \(X^{2}\) if every block of \(Q\) is a symmetric binary relation on \(X\).

The following proposition shows that, for every \(P \in \mathbf{\Pi}(X)\), the partition \(P \otimes P_S\) defined by~\eqref{e1.7} is the smallest symmetric partition of \(X^{2}\) with refinement \(P \otimes P\).

\begin{proposition}\label{p1.16}
Let \(X\) be a nonempty set and let \(P = \{X_j \colon j \in J\}\) be a partition of \(X\). Then \(P \otimes P_S\) is symmetric and the inequality
\begin{equation}\label{p1.16:e1}
P \otimes P \leqslant_{X^{2}} P \otimes P_S
\end{equation}
holds and, moreover, if \(Q\) is an arbitrary symmetric partition of \(X^{2}\) such that
\begin{equation}\label{p1.16:e2}
P \otimes P \leqslant_{X^{2}} Q,
\end{equation}
then we also have
\begin{equation}\label{p1.16:e3}
P \otimes P_S \leqslant_{X^{2}} Q.
\end{equation}
\end{proposition}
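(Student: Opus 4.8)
The plan is to reduce all three assertions to Lemma~\ref{l1.11}, applied with \(X\) replaced by \(X^{2}\), using that condition~\ref{l1.11:s3} there rephrases \(\leqslant_{X^{2}}\) as ``each block of the coarser partition is a union of blocks of the finer one''. Throughout I would use Proposition~\ref{p1}, which tells us that the blocks of \(P\otimes P\) are exactly the rectangles \(X_{j_{1}}\times X_{j_{2}}\), \(\<j_{1},j_{2}>\in J^{2}\), and that, since the \(X_{j}\) are nonempty and pairwise disjoint, distinct index pairs \(\<j_{1},j_{2}>\) yield distinct (in fact disjoint) rectangles; hence \(P\otimes P\) has a canonical set of blocks indexed by \(J^{2}\).

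First I would verify that \(P\otimes P_{S}\) is symmetric: a block has the form \(B=(X_{j_{1}}\times X_{j_{2}})\cup(X_{j_{2}}\times X_{j_{1}})\), and if \(\<x,y>\in B\) then \(\<x,y>\) lies in one of the two rectangles, say \(X_{j_{1}}\times X_{j_{2}}\), so \(x\in X_{j_{1}}\) and \(y\in X_{j_{2}}\), whence \(\<y,x>\in X_{j_{2}}\times X_{j_{1}}\subseteq B\); the other case is symmetric. (That \(P\otimes P_{S}\) is a genuine partition of \(X^{2}\) --- its blocks cover \(X^{2}\) and are pairwise disjoint --- is an elementary consequence of \(P\) being a partition, as already noted before Example~\ref{ch2:ex11}.) Inequality~\eqref{p1.16:e1} is then immediate from Lemma~\ref{l1.11}, condition~\ref{l1.11:s3}: every block \((X_{j_{1}}\times X_{j_{2}})\cup(X_{j_{2}}\times X_{j_{1}})\) of \(P\otimes P_{S}\) is a union of at most two blocks of \(P\otimes P\).

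For the minimality, let \(Q\) be a symmetric partition of \(X^{2}\) with \(P\otimes P\leqslant_{X^{2}}Q\). By Lemma~\ref{l1.11}, condition~\ref{l1.11:s3}, it suffices to show that every block \(C\) of \(Q\) is a union of blocks of \(P\otimes P_{S}\). From the hypothesis \(P\otimes P\leqslant_{X^{2}}Q\) and the same lemma, \(C\) is a union of rectangles, so \(C=\bigcup_{\<j_{1},j_{2}>\in K}(X_{j_{1}}\times X_{j_{2}})\) for \(K:=\{\<j_{1},j_{2}>\in J^{2}\colon X_{j_{1}}\times X_{j_{2}}\subseteq C\}\). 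The crux is to show that \(K\) is symmetric, i.e. \(\<j_{1},j_{2}>\in K\) implies \(\<j_{2},j_{1}>\in K\). Given \(\<j_{1},j_{2}>\in K\), pick \(x\in X_{j_{1}}\) and \(y\in X_{j_{2}}\) (possible since blocks of \(P\) are nonempty); then \(\<x,y>\in X_{j_{1}}\times X_{j_{2}}\subseteq C\), so \(\<y,x>\in C\) because \(C\) is a symmetric block of \(Q\); the unique block of \(P\otimes P\) containing \(\<y,x>\) is \(X_{j_{2}}\times X_{j_{1}}\), and it must be contained in \(C\), so \(\<j_{2},j_{1}>\in K\). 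Consequently
\[
C=\bigcup_{\<j_{1},j_{2}>\in K}\bigl((X_{j_{1}}\times X_{j_{2}})\cup(X_{j_{2}}\times X_{j_{1}})\bigr),
\]
which displays \(C\) as a union of blocks of \(P\otimes P_{S}\); hence \eqref{p1.16:e3} holds.

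I expect the only genuine step to be the symmetry of the index set \(K\) in the last paragraph: this is exactly where nonemptiness of the \(X_{j}\) is used, to pass between ``the rectangle \(X_{j_{1}}\times X_{j_{2}}\) lies in \(C\)'' and ``\(C\) contains the point \(\<x,y>\)''. Everything else is bookkeeping with Lemma~\ref{l1.11} and the definition~\eqref{e1.7} of the blocks of \(P\otimes P_{S}\).
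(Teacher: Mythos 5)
Your proposal is correct and follows essentially the same route as the paper: both arguments reduce the minimality claim to showing that whenever a block of \(Q\) contains a rectangle \(X_{j_1}\times X_{j_2}\) it must also contain the transposed rectangle \(X_{j_2}\times X_{j_1}\), which is forced by the symmetry of that block. The paper phrases this via the converse relation \(B=B^{-1}\) while you argue pointwise through the symmetry of the index set \(K\) and package the refinement statements with Lemma~\ref{l1.11}\ref{l1.11:s3}, but these are only bookkeeping differences.
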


\begin{proof}
It follows directly from the definition of \(P \otimes P_S\) that~\eqref{p1.16:e1} holds and \(P \otimes P_S\) is symmetric.

Suppose \(Q \in \mathbf{\Pi}(X^{2})\) is symmetric and satisfies~\eqref{p1.16:e2}. Let \(\<x, y>\) be an arbitrary point of \(X^{2}\). Then there are \(j_1\), \(j_2 \in J\) such that
\begin{equation}\label{p1.16:e4}
\<x, y> \in (X_{j_1} \times X_{j_2}) \cup (X_{j_2} \times X_{j_1}).
\end{equation}
Similarly, there is a block \(B\) of \(Q\) such that
\begin{equation}\label{p1.16:e5}
\<x, y> \in B.
\end{equation}
There is also a block of \(P \otimes P\) which contains \(\<x, y>\). Using~\eqref{p1.16:e4} we can suppose, for definiteness, that this is the block \(X_{j_1} \times X_{j_2}\),
\[
\<x, y> \in X_{j_1} \times X_{j_2}.
\]
The last membership relation, \eqref{p1.16:e5} and inequality~\eqref{p1.16:e2} imply
\begin{equation}\label{p1.16:e6}
X_{j_1} \times X_{j_2} \subseteq B.
\end{equation}
Now to prove~\eqref{p1.16:e3} it suffices to show that
\begin{equation*}
(X_{j_2} \times X_{j_1}) \subseteq B.
\end{equation*}
For every binary relation \(R \subseteq X \times X\) the converse relation \(R^{-1}\) is defined as
\[
R^{-1} := \{\<y, x> \colon \<x, y> \in R\}.
\]
A binary relation \(R\) is symmetric if and only if \(R = R^{-1}\). From~\eqref{p1.16:e6} it follows that
\[
X_{j_2} \times X_{j_1} = (X_{j_1} \times X_{j_2})^{-1} \subseteq B^{-1}.
\]
Since \(B\) is symmetric, the equality \(B = B^{-1}\) holds. The inclusion
\[
X_{j_2} \times X_{j_1} \subseteq B
\]
follows.
\end{proof}

Let \(X\) be a nonempty set. A mapping \(\Phi\) with domain \(X^{2}\) is symmetric if the equality \(\Phi(x, y) = \Phi(y, x)\) holds for all \(x\), \(y \in X\).

\begin{lemma}\label{l1.17}
Let \(X\) be a nonempty set and let \(\Phi\) be a surjective mapping with domain \(X^{2}\). Then the mapping \(\Phi\) is \emph{symmetric} if and only if \(P_{\Phi^{-1}}\) is a symmetric partition of \(X^{2}\).
\end{lemma}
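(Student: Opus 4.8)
The plan is to reduce both implications to one elementary observation: for every \(\<x, y> \in X^{2}\), the unique block of \(P_{\Phi^{-1}}\) containing \(\<x, y>\) is the fiber \(\Phi^{-1}(\Phi(x, y))\); equivalently, two points of \(X^{2}\) lie in a common block of \(P_{\Phi^{-1}}\) if and only if \(\Phi\) takes the same value on them. This is immediate from the definition~\eqref{e1.2} of \(P_{\Phi^{-1}}\) together with the surjectivity of \(\Phi\) (cf. Example~\ref{ex1.3}), so no genuine work is needed at this step.

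For the implication ``\(\Phi\) symmetric \(\Rightarrow\) \(P_{\Phi^{-1}}\) symmetric'', I would take an arbitrary block \(B\) of \(P_{\Phi^{-1}}\), write \(B = \Phi^{-1}(c)\) for a suitable \(c\) in the codomain of \(\Phi\), and for \(\<x, y> \in X^{2}\) compute
\[
\bigl(\<x, y> \in B\bigr) \Leftrightarrow \bigl(\Phi(x, y) = c\bigr) \Leftrightarrow \bigl(\Phi(y, x) = c\bigr) \Leftrightarrow \bigl(\<y, x> \in B\bigr),
\]
where the middle equivalence uses the symmetry relation \(\Phi(x, y) = \Phi(y, x)\). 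Hence every block of \(P_{\Phi^{-1}}\) is a symmetric binary relation on \(X\), which is exactly the assertion that \(P_{\Phi^{-1}}\) is a symmetric partition of \(X^{2}\).

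For the converse, assume \(P_{\Phi^{-1}}\) is symmetric and fix arbitrary \(x\), \(y \in X\). Put \(B := \Phi^{-1}(\Phi(x, y))\), which by the observation above is the block of \(P_{\Phi^{-1}}\) containing \(\<x, y>\). Symmetry of the block \(B\) yields \(\<y, x> \in B\), and therefore \(\Phi(y, x) = \Phi(x, y)\). Since \(x\) and \(y\) were arbitrary, \(\Phi\) is symmetric.

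I do not anticipate any real obstacle here; the only point requiring a moment's care is the justification that the block of \(P_{\Phi^{-1}}\) through a given pair coincides with the corresponding \(\Phi\)-fiber, which is where the surjectivity hypothesis on \(\Phi\) enters (it is also what guarantees, via Example~\ref{ex1.3}, that \(P_{\Phi^{-1}}\) is a partition of \(X^{2}\) to begin with).
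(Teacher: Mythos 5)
Your proof is correct and follows exactly the route the paper intends: the paper's own proof is just ``It follows directly from the definitions,'' and your argument is the straightforward unpacking of those definitions (blocks of \(P_{\Phi^{-1}}\) are the \(\Phi\)-fibers, and symmetry of \(\Phi\) on a fiber is precisely symmetry of that fiber as a binary relation). Both directions are handled cleanly, so there is nothing to add.
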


\begin{proof}
It follows directly from the definitions.
\end{proof}

Let \(R\) be an equivalence relation on \(X\). Denote by \(\Coh_S(R)\) the class of all symmetric mappings \(\Phi \in \Coh(R)\). The following theorem characterizes \(P \otimes P_S\) as the smallest element of the set of all partitions of \(X^{2}\) generated by mappings from \(\Coh_S(R_P)\).

\begin{theorem}\label{t1.18}
Let \(X\) be a nonempty set and let \(Q = \{X_j \colon j \in J\}\) be a partition of \(X\). Then the inequality
\begin{equation}\label{t1.18:e1}
Q \otimes Q_S \leqslant_{X^{2}} P_{F^{-1}}
\end{equation}
holds for every \(F \in \Coh_S(R_Q)\) and, moreover, there is \(F_1 \in \Coh_S(R_Q)\) such that the equality
\begin{equation}\label{t1.18:e2}
Q \otimes Q_S = P_{F_1^{-1}}
\end{equation}
holds.
\end{theorem}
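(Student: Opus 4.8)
The plan is to prove the two assertions separately. For inequality~\eqref{t1.18:e1} I would not repeat the coherence computation but instead combine results already at hand; for the existence of $F_1$ I would imitate the construction used in the proof of Theorem~\ref{t1.15}, replacing the quotient that collapses $\Delta_J$ by the quotient of $J^2$ that identifies $\langle j_1,j_2\rangle$ with $\langle j_2,j_1\rangle$.

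For~\eqref{t1.18:e1}, let $F \in \Coh_S(R_Q)$. Since $\Coh_S(R_Q)\subseteq \Coh(R_Q)$, Theorem~\ref{t1.13} gives $Q\otimes Q \leqslant_{X^2} P_{F^{-1}}$. Because $F$ is surjective and symmetric, Lemma~\ref{l1.17} shows that $P_{F^{-1}}$ is a symmetric partition of $X^2$. Now apply Proposition~\ref{p1.16}, taking the partition of $X$ there to be our $Q$ and the symmetric partition of $X^2$ there to be $P_{F^{-1}}$: inequality~\eqref{p1.16:e2} holds, hence so does~\eqref{p1.16:e3}, which is exactly $Q\otimes Q_S \leqslant_{X^2} P_{F^{-1}}$.

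For the existence of $F_1$, let $\sigma\colon J^2\to J^2$ be the coordinate swap $\sigma(\langle j_1,j_2\rangle)=\langle j_2,j_1\rangle$, let $\approx$ be the equivalence relation on $J^2$ whose classes are the sets $\{\langle j_1,j_2\rangle,\sigma(\langle j_1,j_2\rangle)\}$, write $J^2_S:=J^2/{\approx}$ and let $q\colon J^2\to J^2_S$ be the canonical surjection. Put $F_1:=q\circ F_0$, where $F_0\colon X^2\to J^2$ is the map defined by~\eqref{t1.13:e2}. Three routine checks finish the argument: $F_1$ is surjective because $F_0$ and $q$ are; $F_1\in\Coh(R_Q)$ because $F_0\in\Coh(R_Q)$ was established in the proof of Theorem~\ref{t1.13} and post-composing an $R_Q$-coherent map with any map preserves $R_Q$-coherence; and $F_1$ is symmetric, since $x\in X_{j_1}$, $y\in X_{j_2}$ force $F_0(x,y)=\langle j_1,j_2\rangle$ and $F_0(y,x)=\langle j_2,j_1\rangle$ to lie in one $\approx$-class, whence $F_1(x,y)=F_1(y,x)$. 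Thus $F_1\in\Coh_S(R_Q)$, and the fibre of $F_1$ over $q(\langle j_1,j_2\rangle)$ is $F_0^{-1}(\langle j_1,j_2\rangle)\cup F_0^{-1}(\langle j_2,j_1\rangle)=(X_{j_1}\times X_{j_2})\cup(X_{j_2}\times X_{j_1})$, so $P_{F_1^{-1}}$ is precisely the partition $Q\otimes Q_S$ of~\eqref{e1.7}, giving~\eqref{t1.18:e2}.

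I do not expect a serious obstacle here: the first half is a formal consequence of Theorem~\ref{t1.13}, Lemma~\ref{l1.17} and Proposition~\ref{p1.16}, and the second half only requires a clean description of the quotient of $J^2$ by the coordinate swap together with a reuse of $F_0$. The only point deserving a moment's care is the bookkeeping with $\approx$-classes (a class has two elements unless $j_1=j_2$, when it is a singleton), but this affects neither the surjectivity of $q$ nor the fibre computation above.
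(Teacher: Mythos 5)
Your proposal is correct and follows essentially the same route as the paper: the inequality is deduced by combining Theorem~\ref{t1.13}, Lemma~\ref{l1.17} and Proposition~\ref{p1.16}, and the map \(F_1\) is obtained by post-composing \(F_0\) with the quotient of \(J^{2}\) by the coordinate swap, which is exactly the paper's surjection \(\Psi \colon J^{2} \to J \times J_S\). Your write-up is in fact slightly more detailed than the paper's, which merely asserts the surjectivity, coherence, symmetry and fibre computations that you carry out explicitly.
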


\begin{proof}
Let \(F \in \Coh_S(R_Q)\). By Theorem~\ref{t1.13}, we have the inequality
\begin{equation}\label{t1.18:e3}
Q \otimes Q \leqslant_{X^{2}} P_{F^{-1}}.
\end{equation}
Since \(F\) is a symmetric mapping, the partition \(P_{F^{-1}}\) is symmetric by Lemma~\ref{l1.17}. Using Proposition~\ref{p1.16}, we see that inequality~\eqref{t1.18:e1} follows from \eqref{t1.18:e3}.

Let
\begin{equation}\label{t1.18:e4}
J \times J_S := \bigl\{\{\<j_1, j_2>, \<j_2, j_1>\} \colon \<j_1, j_2> \in \nabla_{J}\bigr\} \cup \Delta_{J},
\end{equation}
where \(\nabla_{J} = J^{2} \setminus \Delta_{J}\). Let us consider the surjection \(\Psi \colon J^{2} \to J \times J_S\) such that
\begin{equation}\label{t1.18:e5}
\Psi(j_1, j_2) = \begin{cases}
\{\<j_1, j_1>\}, & \text{if } j_1 = j_2\\
\{\<j_1, j_2>, \<j_2, j_1>\}, & \text{if } j_1 \neq j_2.
\end{cases}
\end{equation}
Write \(F_1\) for the composition
\[
X^{2} \xrightarrow{F_0} J^{2} \xrightarrow{\Psi} J \times J_S,
\]
where \(F_0\) is defined by~\eqref{t1.13:e2}. Then \(F_1\) belongs to \(\Coh_S(R_Q)\) and equality~\eqref{t1.18:e2} holds.
\end{proof}

Let \(X\) be a nonempty set and let \(P = \{X_j \colon j \in J\}\) be a partition of \(X\). Recall that \(P \otimes P_S^{1}\) is a partition of \(X^{2}\) with the blocks \(B\) such that either \(B = R_P\) or there are distinct \(j_1\), \(j_{2} \in J\) for which
\[
B = (X_{j_1} \times X_{j_2}) \cup (X_{j_2} \times X_{j_1})
\]
holds.

The following proposition shows that \(P \otimes P_S^{1}\) is the smallest symmetric partition with refinement \(P \otimes P^{1}\).

\begin{proposition}\label{p1.19}
Let \(X\) be a nonempty set and let \(P\) be a partition of \(X\). Then \(P \otimes P_S^{1}\) is symmetric and the inequality
\begin{equation}\label{p1.19:e1}
P \otimes P^{1} \leqslant_{X^{2}} P \otimes P_S^{1}
\end{equation}
holds and, moreover, if \(Q\) is an arbitrary symmetric partition of \(X^{2}\) such that
\begin{equation}\label{p1.19:e2}
P \otimes P^{1} \leqslant_{X^{2}} Q,
\end{equation}
then we also have
\begin{equation}\label{p1.19:e3}
P \otimes P_S^{1} \leqslant_{X^{2}} Q.
\end{equation}
\end{proposition}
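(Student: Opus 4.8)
The plan is to mirror the argument of Proposition~\ref{p1.16}, the only new ingredient being that \(P \otimes P^{1}\) has a distinguished block \(R_P\) which is also a block of \(P \otimes P_S^{1}\). First I would dispose of the two easy assertions. Symmetry of \(P \otimes P_S^{1}\) is immediate from the description of its blocks: the block \(R_P = \bigcup_{j \in J} X_j^{2}\) is a union of the symmetric relations \(X_j^{2}\), hence symmetric, and every other block has the form \((X_{j_1} \times X_{j_2}) \cup (X_{j_2} \times X_{j_1})\), which equals its own converse. For inequality~\eqref{p1.19:e1} I would invoke the equivalence of \ref{l1.11:s1} and \ref{l1.11:s3} in Lemma~\ref{l1.11}: it suffices to see that every block of \(P \otimes P_S^{1}\) is a union of blocks of \(P \otimes P^{1}\), and indeed \(R_P\) is literally a block of \(P \otimes P^{1}\), while \((X_{j_1} \times X_{j_2}) \cup (X_{j_2} \times X_{j_1})\) is the union of the two \emph{distinct} blocks \(X_{j_1} \times X_{j_2}\) and \(X_{j_2} \times X_{j_1}\) of \(P \otimes P^{1}\) (distinctness uses \(j_1 \neq j_2\)).

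For the principal claim, assume \(Q \in \mathbf{\Pi}(X^{2})\) is symmetric and satisfies~\eqref{p1.19:e2}, and fix an arbitrary point \(\<x, y> \in X^{2}\); by Definition~\ref{d1.7} it is enough to show that the block of \(P \otimes P_S^{1}\) containing \(\<x, y>\) is contained in the (unique) block \(B\) of \(Q\) containing \(\<x, y>\). I split into two cases. If \(\<x, y> \in R_P\), then \(R_P\) is the block of \(P \otimes P_S^{1}\) through \(\<x, y>\); since \(R_P\) is also a block of \(P \otimes P^{1}\), inequality~\eqref{p1.19:e2} forces \(R_P \subseteq B\), which is what we need. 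If \(\<x, y> \notin R_P\), then \(x \in X_{j_1}\) and \(y \in X_{j_2}\) for some distinct \(j_1\), \(j_2 \in J\), and the block of \(P \otimes P_S^{1}\) through \(\<x, y>\) equals \((X_{j_1} \times X_{j_2}) \cup (X_{j_2} \times X_{j_1})\). Now I repeat the computation from Proposition~\ref{p1.16}: \(X_{j_1} \times X_{j_2}\) is a block of \(P \otimes P^{1}\) containing \(\<x, y>\), so~\eqref{p1.19:e2} gives \(X_{j_1} \times X_{j_2} \subseteq B\); passing to converse relations and using \(B = B^{-1}\) (symmetry of \(B\)) yields \(X_{j_2} \times X_{j_1} = (X_{j_1} \times X_{j_2})^{-1} \subseteq B^{-1} = B\), hence \((X_{j_1} \times X_{j_2}) \cup (X_{j_2} \times X_{j_1}) \subseteq B\), establishing~\eqref{p1.19:e3}.

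I do not expect any genuine obstacle: the argument is a routine variant of Proposition~\ref{p1.16}. The one place that requires a moment of attention is the case \(\<x, y> \in R_P\), where it is essential that \(R_P\) occurs as a \emph{block} of \(P \otimes P^{1}\) (rather than merely as a union of such blocks), so that the hypothesis \(P \otimes P^{1} \leqslant_{X^{2}} Q\) can be applied to it verbatim; apart from that, one only has to note that a point \(\<x, y> \notin R_P\) lies in exactly one of \(X_{j_1} \times X_{j_2}\) and \(X_{j_2} \times X_{j_1}\), so the choice made in the inclusion argument is unambiguous.
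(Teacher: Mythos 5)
Your proof is correct and follows essentially the same route as the paper: the decisive step in both is that a block \(X_{j_1} \times X_{j_2}\) of \(P \otimes P^{1}\) lands inside a block \(B\) of \(Q\), whence \(X_{j_2} \times X_{j_1} \subseteq B^{-1} = B\) by symmetry of \(B\). The only cosmetic difference is that the paper deduces \(P \otimes P \leqslant_{X^{2}} Q\) and cites Proposition~\ref{p1.16} as a black box before combining the two types of blocks, whereas you inline that computation directly in your second case.
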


\begin{proof}
It follows directly from the definitions of \(P \otimes P_S^{1}\) and \(P \otimes P^{1}\) that \(P \otimes P_S^{1}\) is symmetric and~\eqref{p1.19:e1} holds.

Suppose \(Q \in \mathbf{\Pi}(X^{2})\) is symmetric and satisfies~\eqref{p1.19:e2}. Then we evidently have
\begin{equation}\label{p1.19:e4}
P \otimes P \leqslant_{X^{2}} Q.
\end{equation}
By Proposition~\ref{p1.16}, inequality~\eqref{p1.19:e4} implies the inequality
\begin{equation}\label{p1.19:e5}
P \otimes P_S \leqslant_{X^{2}} Q.
\end{equation}
Inequality~\eqref{p1.19:e3} follows from~\eqref{p1.19:e5} and \eqref{p1.19:e2} because every block of \(P \otimes P_S^{1}\) is a block of \(P \otimes P^{1}\) or a block of \(P \otimes P_S\).
\end{proof}

Let \(X\) be a nonempty set and let \(R\) be an equivalence relation on \(X\). We will denote by \(\Coh_S^{1}(R)\) the class of all symmetric mappings \(\Phi \in \Coh(R)\) satisfying the equality
\[
\Phi(x, x) = \Phi(y, y)
\]
for all \(x\), \(y \in X\). It is clear that the equality
\begin{equation}\label{e1.35}
\Coh_S^{1}(R) = \Coh_S(R) \cap \Coh^{1}(R)
\end{equation}
holds for every nonvoid X and every equivalence relation \(R\) on \(X\).

\begin{theorem}\label{t1.20}
Let \(X\) be a nonempty set and let \(Q = \{X_j \colon j \in J\}\) be a partition of \(X\). Then the inequality
\begin{equation}\label{t1.20:e1}
Q \otimes Q_S^{1} \leqslant_{X^{2}} P_{\Phi^{-1}}
\end{equation}
holds for every \(\Phi \in \Coh_S^{1}(R_Q)\) and, moreover, there is \(\Phi_1 \in \Coh_S^{1}(R_Q)\) such that the equality
\begin{equation}\label{t1.20:e2}
Q \otimes Q_S^{1} = P_{\Phi_1^{-1}}
\end{equation}
holds.
\end{theorem}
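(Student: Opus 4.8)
The plan is to follow verbatim the two-part scheme used for Theorems~\ref{t1.13}, \ref{t1.15} and~\ref{t1.18}: first establish the inequality~\eqref{t1.20:e1} for an arbitrary $\Phi \in \Coh_S^{1}(R_Q)$, and then produce one specific $\Phi_1 \in \Coh_S^{1}(R_Q)$ attaining the equality~\eqref{t1.20:e2}.

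For~\eqref{t1.20:e1} I would invoke the identity~\eqref{e1.35}. If $\Phi \in \Coh_S^{1}(R_Q)$, then $\Phi \in \Coh^{1}(R_Q)$, so Theorem~\ref{t1.15} gives $Q \otimes Q^{1} \leqslant_{X^{2}} P_{\Phi^{-1}}$; also $\Phi \in \Coh_S(R_Q)$, so $P_{\Phi^{-1}}$ is a symmetric partition of $X^{2}$ by Lemma~\ref{l1.17}. Applying Proposition~\ref{p1.19} with $P_{\Phi^{-1}}$ playing the role of the symmetric partition $Q$ there, the refinement $Q \otimes Q^{1} \leqslant_{X^{2}} P_{\Phi^{-1}}$ is promoted to $Q \otimes Q_S^{1} \leqslant_{X^{2}} P_{\Phi^{-1}}$, which is~\eqref{t1.20:e1}.

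For~\eqref{t1.20:e2} I would quotient the canonical map $F_0 \colon X^{2} \to J^{2}$ of~\eqref{t1.13:e2} by the equivalence on $J^{2}$ that collapses the diagonal $\Delta_J$ to a single point \emph{and} identifies each off-diagonal pair with its transpose. Explicitly, set
\[
(J \times J_S)^{1} := \bigl\{\{\<j_1, j_2>, \<j_2, j_1>\} \colon \<j_1, j_2> \in \nabla_J\bigr\} \cup \{\Delta_J\}
\]
and let $\Psi \colon J^{2} \to (J \times J_S)^{1}$ be the surjection with $\Psi(j, j) = \Delta_J$ for every $j \in J$ and $\Psi(j_1, j_2) = \{\<j_1, j_2>, \<j_2, j_1>\}$ whenever $j_1 \neq j_2$ (so $\Psi$ is the obvious combination of the quotient maps used in the proofs of Theorems~\ref{t1.15} and~\ref{t1.18}). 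Define $\Phi_1$ as the composition $X^{2} \xrightarrow{F_0} J^{2} \xrightarrow{\Psi} (J \times J_S)^{1}$. Then $\Phi_1$ is surjective and $R_Q$-coherent because $F_0$ is (coherence is preserved under post-composition), $\Phi_1$ is symmetric because $\Psi(j_1, j_2) = \Psi(j_2, j_1)$, and $\Phi_1(x, x) = \Delta_J$ does not depend on $x$; hence $\Phi_1 \in \Coh_S(R_Q) \cap \Coh^{1}(R_Q) = \Coh_S^{1}(R_Q)$ by~\eqref{e1.35}. Finally, computing the fibers of $\Phi_1$ and using Proposition~\ref{p2}, one gets $\Phi_1^{-1}(\Delta_J) = \bigcup_{j \in J} X_j^{2} = R_Q$ and $\Phi_1^{-1}(\{\<j_1, j_2>, \<j_2, j_1>\}) = (X_{j_1} \times X_{j_2}) \cup (X_{j_2} \times X_{j_1})$ for distinct $j_1, j_2$, so $P_{\Phi_1^{-1}} = Q \otimes Q_S^{1}$ by the definition of $Q \otimes Q_S^{1}$.

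Since every step of this plan is a direct appeal to an already-proved statement (Theorem~\ref{t1.15}, Lemma~\ref{l1.17}, Proposition~\ref{p1.19}, Proposition~\ref{p2}) or a one-line verification, I do not anticipate a real obstacle. The only place deserving a moment's attention is the bookkeeping in the definition of $(J \times J_S)^{1}$: one must check that the extra element $\Delta_J$ of the target is genuinely distinct from every two-element block $\{\<j_1, j_2>, \<j_2, j_1>\}$ — which holds because $\Delta_J$ consists only of diagonal pairs while such a block consists only of off-diagonal pairs — so that distinct elements of the target really do yield the distinct blocks of $Q \otimes Q_S^{1}$ and nothing collapses unintentionally.
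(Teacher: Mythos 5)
Your proposal is correct and follows essentially the same route as the paper: the inequality is obtained exactly as in the paper by combining \eqref{e1.35}, Theorem~\ref{t1.15}, Lemma~\ref{l1.17} and Proposition~\ref{p1.19}, and your witness \(\Phi_1\) is the same map the paper constructs (the paper merely factors your quotient \(\Psi\) through the intermediate set \(J \times J_S\) as a composition of two collapsing maps, which is an immaterial difference). Your explicit fiber computation and the check that \(\Delta_J\) is distinct from every two-element block are fine.
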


\begin{proof}
Let \(\Phi \in \Coh_S^{1}(R_P)\). Then, by~\eqref{e1.35}, we obtain that \(P_{\Phi^{-1}}\) is a symmetric partition of \(X^{2}\). Since \(\Phi \in \Coh(R_P)\) holds, Theorem~\ref{t1.15} implies
\begin{equation}\label{t1.20:e3}
Q \otimes Q^{1} \leqslant_{X^{2}} P_{\Phi^{-1}}.
\end{equation}
Inequality~\eqref{t1.20:e1} follows from \(\Phi \in \Coh_S(R_Q)\), Proposition~\ref{p1.19} and \eqref{t1.20:e3}.

Let us find \(\Phi_1 \in \Coh_S^{1}(R_Q)\) such that~\eqref{t1.20:e2} holds. Similarly \eqref{t1.15:e4} and \eqref{t1.18:e4}, we define \(J \times J_S^{1} \in \mathbf{\Pi}(X^{2})\) as
\[
J \times J_S^{1} := \nabla_{J} \cup \bigl\{\Delta_J\bigr\}.
\]
Let us consider the mapping \(\Psi_1 \colon J \times J_S \to J \times J_S^{1}\) defined such that
\[
\Psi_1(\<j, j>) = \{\Delta_J\} \quad \text{and} \quad \Psi_1(\{\<j_1, j_2>, \<j_2, j_1>\}) = \{\<j_1, j_2>, \<j_2, j_1>\}
\]
hold for every \(j \in J\) and all distinct \(j_1\), \(j_2 \in J\). Write \(\Phi_1\) for the composition
\[
X^{2} \xrightarrow{F_0} J^{2} \xrightarrow{\Psi} J \times J_S \xrightarrow{\Psi_1} J \times J_S^{1},
\]
where \(F_0\) and \(\Psi\) are defined by \eqref{t1.13:e2} and \eqref{t1.18:e5}, respectively. Then \(\Phi_1\) belongs to \(\Coh_S^{1}(R_Q)\) and satisfies equality \eqref{t1.20:e2}.
\end{proof}

The results of the present section are quite elementary and should be known to experts in the theory of relations in one form or another. Note also that these results can be naturally generalized to the case of partitions of the set \(X^{K}\) for arbitrary \(K\) with \(|K| \geqslant 2\). The partitions \(P \otimes P\), \(P \otimes P^{1}\), \(P \otimes P_S\) and \(P \otimes P_S^{1}\) of \(X^{2}\) can also be described as Cartesian products of disjoint unions of complete graphs with interpretation of \(R_P\)-coherent mappings as homomorphisms of corresponding graphs. (See, for example, \cite{Imrich2008} and \cite{HN} for some results related to Cartesian products and, respectively, morphisms of graphs.)

The algebraic structure of the subsemigroups of \(\mc{B}_{X}\) generated by \(P \otimes P\), \(P \otimes P^{1}\) and \(P \otimes P_S\),\(P \otimes P_S^{1}\) will be described in Section~3 and, respectively, Section~4 of the paper.

\section{Semigroups generated by finest partitions of Cartesian squares}

Let \((\mc{S}, *)\) be a semigroup. If \(\mc{S}\) is a single-point set, \(\mc{S} = \{e\}\), then we consider that \(e\) is the identity element of \((\mc{S}, *)\). The usual convention says that \((\mc{S}, *)\) must have at least two elements to posses a zero (see, for example, \cite{Higgins1992}).

An element \(i \in \mc{S}\) is an \emph{idempotent element} of \((\mc{S}, *)\) if
\[
i^2 = i * i = i.
\]
It is clear that the identity element \(e\) and the zero \(\theta\) are idempotents. We will say that \(e\) and \(\theta\) are the \emph{trivial idempotent elements}.

\begin{definition}\label{ch2:d7}
Let \((\mc{S}, \circ)\) and \((\mc{H}, *)\) be semigroups. A mapping \(F \colon \mc{S} \to \mc{H}\) is a \emph{homomorphism} if
\[
F(s_1 \circ s_2) = F(s_1) * F(s_2)
\]
holds for all \(s_1\), \(s_2 \in \mc{S}\). If a homomorphism is injective, then it is a \emph{monomorphism}. The bijective homomorphisms are called the \emph{isomorphisms}.

The semigroups \(\mc{S}\) and \(\mc{H}\) are \emph{isomorphic} if there is an isomorphism \(F \colon \mc{S} \to \mc{H}\).
\end{definition}

Recall that, for every nonempty set \(Q\) of binary relations on a set \(X\), we denote by \(\mc{S}_{Q}\) a subsemigroup of \(\mc{B}_{X}\) having \(Q\) as a set of generators. In particular, if \(Q\) is a partition of \(X^{2}\), then every block of \(Q\) is a binary relation on \(X\) so that we can consider the semigroup \(\mc{S}_{Q}\).

Now let \(P\) be a partition of \(X\). Then \(P \otimes P\) is a partition of \(X^{2}\) and our first goal is to describe the algebraic structure of the semigroup \(\mc{S}_{P\otimes P}\) up to isomorphism.

\begin{theorem}\label{ch2:th6}
Let \((\mc{H}, *)\) be a semigroup. The following two statements are equivalent.
\begin{enumerate}
\item\label{ch2:th6:s1} There are a nonempty set \(X\) and a partition \(P\) of \(X\) such that the semigroup \((\mc{S}_{P\otimes P}, \circ)\) is isomorphic to \((\mc{H}, *)\).
\item\label{ch2:th6:s2} The semigroup \((\mc{H}, *)\) satisfies the following conditions.
\begin{enumerate}
\item \((\mc{H}, *)\) contains a zero element \(\theta\) if \(|\mc{H}| \geqslant 2\).
\item The equality
\begin{equation}\label{ch2:th6:e1}
x * y = \theta
\end{equation}
holds for all distinct idempotent elements \(x\), \(y \in \mc{H}\).
\item If \(i_{l}\) and \(i_{r}\) are nontrivial idempotent elements of \(\mc{H}\), then there is a unique nonzero \(a \in \mc{H}\) such that
\begin{equation}\label{ch2:th6:e3}
a = i_{l} * a * i_{r}.
\end{equation}
\item If \(|\mc{H}| \geqslant 2\) holds, then for every nonzero \(a \in \mc{H}\) there is a unique pair \((i_{la}, i_{ra})\) of nontrivial idempotent elements of \(\mc{H}\) such that
\begin{equation}\label{ch2:th6:e2}
a = i_{la} * a * i_{ra}.
\end{equation}
\end{enumerate}
\end{enumerate}
\end{theorem}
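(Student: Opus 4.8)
The plan is to make $\mc{S}_{P\otimes P}$ completely explicit and then recognize the list (ii) as an abstract axiomatization of the class so obtained. First I would compute compositions of generators: for a partition $P=\{X_j\colon j\in J\}$ of $X$, disjointness of the blocks yields
\[
(X_{j_1}\times X_{j_2})\circ(X_{k_1}\times X_{k_2})=\begin{cases}X_{j_1}\times X_{k_2}, & j_2=k_1,\\ \varnothing, & j_2\neq k_1.\end{cases}
\]
Consequently $\mc{S}_{P\otimes P}=\{X^{2}\}$ when $|J|=1$, whereas for $|J|\geqslant 2$ it equals $\{X_{j_1}\times X_{j_2}\colon\langle j_1,j_2\rangle\in J^{2}\}\cup\{\varnothing\}$; its idempotents are the relations $X_j^{2}$ ($j\in J$) together with $\varnothing$, it has no identity when $|J|\geqslant 2$, and $\varnothing$ is its zero. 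Thus $\mc{S}_{P\otimes P}$ is either a one-element semigroup or a ``combinatorial Brandt semigroup'' built on the index set $J$, and the whole point is that the conditions in (ii) should single out exactly these semigroups.

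For the implication (i)$\Rightarrow$(ii), using the displayed product it is routine to verify (a)--(d): $\varnothing$ is the zero; $X_p^{2}\circ X_q^{2}=\varnothing$ for distinct $p,q$; for nontrivial idempotents $X_p^{2},X_q^{2}$ the relation $X_p\times X_q$ is the unique nonzero solution of $a=X_p^{2}\circ a\circ X_q^{2}$; and for $a=X_{j_1}\times X_{j_2}$ the unique admissible pair is $(X_{j_1}^{2},X_{j_2}^{2})$. The case $|\mc{H}|=1$ makes (ii) hold vacuously.

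For (ii)$\Rightarrow$(i), if $|\mc{H}|=1$ I take $X$ a one-point set; assume $|\mc{H}|\geqslant 2$, so by (a) there is a zero $\theta$. First I would show $\mc{H}$ has no identity: if $e$ were one, applying (d) to $a=e$ produces nontrivial idempotents $i_l,i_r$ with $e=i_l*i_r$; distinct $i_l,i_r$ contradict (b), while $i_l=i_r=i$ forces $e=i$, contradicting nontriviality. Hence the set $J$ of nontrivial idempotents coincides with the set of all idempotents other than $\theta$; it is nonempty by (d), and $|J|=1$ is impossible since (c) and (d) would then force $\mc{H}=\{i,\theta\}$ with $i$ a two-sided identity — so $|J|\geqslant 2$. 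For $p,q\in J$ let $a_{p,q}$ be the unique nonzero element with $a_{p,q}=p*a_{p,q}*q$ given by (c); condition (d) then makes $(p,q)\mapsto a_{p,q}$ a bijection of $J^{2}$ onto $\mc{H}\setminus\{\theta\}$, with $a_{j,j}=j$. From $a_{p,q}=p*a_{p,q}*q$ one extracts $p*a_{p,q}=a_{p,q}=a_{p,q}*q$ and, using (b), $p'*a_{p,q}=\theta$ for $p'\neq p$ and $a_{p,q}*q'=\theta$ for $q'\neq q$. Hence $a_{p,q}*a_{r,s}=(a_{p,q}*q)*(r*a_{r,s})=a_{p,q}*(q*r)*a_{r,s}=\theta$ whenever $q\neq r$, matching the Brandt rule, while for $q=r$ the element $c:=a_{p,q}*a_{q,s}$ satisfies $p*c*s=c$, so $c\in\{\theta,a_{p,s}\}$ by (d). Granting $c=a_{p,s}$, the assignment $\theta\mapsto\varnothing$, $a_{p,q}\mapsto X_p\times X_q$ is an isomorphism of $\mc{H}$ onto $\mc{S}_{P\otimes P}$, where $X$ is any set equipped with a partition $\{X_j\}_{j\in J}$ into $|J|$ nonempty blocks.

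The hard part will be precisely the remaining point: showing that $a_{p,q}*a_{q,s}\neq\theta$, i.e.\ that the nonzero elements genuinely compose as matrix units instead of collapsing to $\theta$. I expect this to need the uniqueness clauses of (iii) and (iv) together with an associativity argument — for instance analyzing the products $a_{p,q}*a_{q,p}$, or using that the idempotents $p\in J$ together with the $a_{p,q}$ form a generating family and that $a_{p,q}$ must be ``invertible'' between $p$ and its paired idempotents — and this is the step where all four hypotheses have to be brought to bear simultaneously; everything else is bookkeeping.
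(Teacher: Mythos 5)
Your treatment of \ref{ch2:th6:s1}\(\Rightarrow\)\ref{ch2:th6:s2} and the skeleton of the converse agree with the paper, which likewise partitions the set of idempotents into singletons and sends \(\{\langle i_1,i_2\rangle\}\) to the unique nonzero solution of \(x=i_1*x*i_2\). But the step you defer --- proving \(a_{p,q}*a_{q,s}\neq\theta\) --- is not merely the hard part of the argument: it is a genuine gap, and it cannot be closed from conditions \((ii_1)\)--\((ii_4)\) at all. Consider \(\mc{H}=\{\theta,e,f,a,b\}\) with \(e,f\) idempotent, \(e*a=a*f=a\), \(f*b=b*e=b\), and every other product (in particular \(a*b\), \(b*a\), \(e*f\), \(a*a\), \(b*b\)) equal to \(\theta\). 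This is associative, since it is realized inside \(\mc{B}_X\) for \(X=\{1,2,3,4\}\) by \(\theta=\varnothing\), \(e=\{\langle 1,1\rangle,\langle 3,3\rangle\}\), \(f=\{\langle 2,2\rangle,\langle 4,4\rangle\}\), \(a=\{\langle 1,2\rangle\}\), \(b=\{\langle 4,3\rangle\}\); these five relations are closed under \(\circ\) and reproduce exactly this Cayley table. One checks directly that \((ii_1)\)--\((ii_4)\) hold: \(e\) and \(f\) are the nontrivial idempotents, the unique nonzero solutions for the pairs \((e,e),(e,f),(f,e),(f,f)\) are \(e,a,b,f\) respectively, and each nonzero element has a unique sandwiching pair. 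Yet \(\mc{H}\) is not isomorphic to any \(\mc{S}_{P\otimes P}\): by Lemma~\ref{l3.3} one would need \(|P|=2\), and there the two non-idempotent nonzero elements satisfy \((X_1\times X_2)\circ(X_2\times X_1)=X_1^2\neq\varnothing\), whereas \(a*b=b*a=\theta\). So the implication \ref{ch2:th6:s2}\(\Rightarrow\)\ref{ch2:th6:s1} fails as stated, and no amount of juggling the uniqueness clauses will produce the missing nondegeneracy.

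It is worth knowing that the paper's own proof stumbles at exactly the point you isolated: after deriving \(F(s_1)*F(s_2)=x_1*x_2=i_{1,1}*(x_1*x_2)*i_{2,2}\), it invokes the uniqueness of the \emph{nonzero} solution \(y\) of \(y=i_{1,1}*y*i_{2,2}\) to conclude \(F(s_1)*F(s_2)=F(s_1\circ s_2)\), tacitly assuming \(x_1*x_2\neq\theta\); since \(\theta\) also satisfies \(\theta=i_{1,1}*\theta*i_{2,2}\), this is precisely the unjustified step, and the example above shows it is not recoverable. To repair the statement one must add a hypothesis forcing the products to survive, for instance that \(a*b\neq\theta\) whenever \(i_{ra}=i_{lb}\) in the notation of \((ii_4)\); with such a clause your argument closes (the element \(c=a_{p,q}*a_{q,s}\) is then a nonzero solution of \(c=p*c*s\), hence equals \(a_{p,s}\) by \((ii_3)\)), and the rest of your proposal, which is indeed bookkeeping, goes through.
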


\begin{proof}
\(\ref{ch2:th6:s1} \Rightarrow \ref{ch2:th6:s2}\). Let \(P = \{X_j \colon j \in J\}\) be a partition of a set \(X\) such that the semigroup \((\mc{H}, *)\) is isomorphic to \((\mc{S}_{P\otimes P}, \circ)\). We must prove that \((\mc{H}, *)\) satisfies conditions \((ii_1)\)--\((ii_4)\). Since \((\mc{H}, *)\) and \((\mc{S}_{P\otimes P}, \circ)\) are isomorphic, it suffices to show that the similar conditions hold for \((\mc{S}_{P\otimes P}, \circ)\). Let us do it.

First of all we note that conditions \((ii_1)-(ii_4)\) are satisfied if \(|P| = 1\). Suppose \(|P| \geqslant 2\) holds.

\((ii_1)\). The inequality \(|P| \geqslant 2\) implies that there are two distinct \(X_{j_1}\), \(X_{j_2} \in P\). Then we have \(X_{j_1} \times X_{j_1} \in P \otimes P\) and \(X_{j_2} \times X_{j_2} \in P\otimes P\) and \(X_{j_1} \cap X_{j_2} = \varnothing\). Consequently
\[
\mc{S}_{P\otimes P} \ni (X_{j_1} \times X_{j_1}) \circ (X_{j_2} \times X_{j_2}) = \varnothing.
\]
Thus \(\mc{S}_{P\otimes P}\) contains the empty binary relation. This is the zero element of \((\mc{S}_{P\otimes P}, \circ)\).

In order to verify the fulfillment of \((ii_2)\)--\((ii_4)\), we note that
\begin{equation}\label{ch2:th6:e4}
(X_{j_1} \times X_{j_2}) \circ (X_{j_3} \times X_{j_4}) = \begin{cases}
\varnothing, & \text{if } X_{j_2} \neq X_{j_3}\\
(X_{j_1} \times X_{j_4}), & \text{if } X_{j_2} = X_{j_3}
\end{cases}
\end{equation}
holds for all \(X_{j_1}\), \(X_{j_2}\), \(X_{j_3}\), \(X_{j_4} \in P\). Thus every element of \((\mc{S}_{P\otimes P}, \circ)\) is either empty or belongs to \(P\otimes P\).

\((ii_2)\). From~\eqref{ch2:th6:e4} it follows that every nontrivial idempotent element \(i\) of \((\mc{S}_{P\otimes P}, \circ)\) has the form \(i = X_{j} \times X_{j}\) for some \(X_{j} \in P\). Now \((ii_2)\) follows from the equality \(X_{j_1} \cap X_{j_2} = \varnothing\) which holds for all different \(X_{j_1}\), \(X_{j_2} \in P\).

\((ii_3)\). Let \(i_l\) and \(i_r\) be nontrivial idempotent elements of \((\mc{S}_{P\otimes P}, \circ)\). Then there are \(j_1\), \(j_2 \in J\) such that
\[
i_l := X_{j_1} \times X_{j_1} \quad \text{and} \quad i_r := X_{j_2} \times X_{j_2}.
\]
It is clear that~\eqref{ch2:th6:e3} holds if \(a = X_{j_1} \times X_{j_2}\). Suppose now that there is a nonzero \(b \in \mc{H}\) such that
\begin{equation}\label{ch2:th6:e6}
b = i_l * b * i_r.
\end{equation}
Then we can find \(j_3\), \(j_4 \in J\) such that \(b = X_{j_3} \times X_{j_4}\). This equality and \eqref{ch2:th6:e6} give us
\begin{multline*}
b = (X_{j_3} \times X_{j_3}) * b * (X_{j_4} \times X_{j_4}) \\
= (X_{j_1} \times X_{j_1}) * (X_{j_3} \times X_{j_3}) * b * (X_{j_4} \times X_{j_4}) * (X_{j_2} \times X_{j_2}) \neq \varnothing.
\end{multline*}
Using \((ii_2)\), we have \(X_{j_1} = X_{j_3}\) and \(X_{j_2} = X_{j_4}\).

\((ii_4)\). Let \(a \in \mc{H}\) be nonzero. It was shown above that there are \(X_{j_1}\), \(X_{j_2} \in P\) such that
\[
a = X_{j_1} \times X_{j_2}.
\]
Write
\[
i_l := X_{j_1} \times X_{j_1} \quad \text{and} \quad i_r := X_{j_2} \times X_{j_2}.
\]
Now~\eqref{ch2:th6:e2} follows from~\eqref{ch2:th6:e4}. The uniqueness of representation~\eqref{ch2:th6:e2} can be proved as above.

\(\ref{ch2:th6:s2} \Rightarrow \ref{ch2:th6:s1}\). Let \((\mc{H}, *)\) satisfy condition~\ref{ch2:th6:s2}. Let \(E = E(\mc{H})\) be the set of all idempotent elements of \(\mc{H}\) and let \(P\) be the partition of \(E\) on the one-point subsets of \(E\),
\[
P = \{\{i\} \colon i \in E\}.
\]
We claim that the semigroup \((\mc{S}_{P\otimes P}, \circ)\) is isomorphic to \((\mc{H}, *)\). Using Proposition~\ref{p1} and formula~\eqref{ch2:th6:e4} we see that every element of \((\mc{S}_{P\otimes P}, \circ)\) is either empty or has a form \(s = \{i_1\} \times \{i_2\}\) for some \(i_1\), \(i_2 \in E\). From the definition of the Cartesian product, we have the equality
\[
\{i_1\} \times \{i_2\} = \{\<i_1, i_2>\},
\]
thus
\begin{equation}\label{ch2:th6:e7}
s = \{\<i_1, i_2>\},
\end{equation}
holds, where \(\<i_1, i_2> \in E \times E\). Conditions \((ii_3)\) and \((ii_4)\) imply that there is a bijection \(F \colon \mc{S}_{P\otimes P} \to \mc{H}\) such that \(F(\varnothing) = \theta\), where \(\theta\) is the zero element of \((\mc{H}, *)\), and \(F(\{\<i_1, i_2>\}) = x\), where \(x\) is a unique nonzero element of \(\mc{H}\) such that
\begin{equation}\label{ch2:th6:e8}
x = i_1 * x * i_2.
\end{equation}
It suffices to show that \(F \colon \mc{S}_{P\otimes P} \to \mc{H}\) is an isomorphism. Let \(s_1\) and \(s_2\) belong to \(\mc{S}_{P\otimes P}\). We must show that
\begin{equation}\label{ch2:th6:e10}
F(s_1) * F(s_2) = F(s_1 \circ s_2).
\end{equation}
This equality is trivially valid if \(s_1 = \varnothing\) or \(s_2 = \varnothing\). Suppose now that \(s_1 \neq \varnothing \neq s_2\) but
\begin{equation}\label{ch2:th6:e11}
s_1 \circ s_2 = \varnothing
\end{equation}
holds. Using~\eqref{ch2:th6:e7} we can find \(i_{1,1}\), \(i_{1,2}\), \(i_{2,1}\), \(i_{2,2} \in E\) such that
\begin{equation}\label{ch2:th6:e12}
s_1 = \{\<i_{1,1}, i_{1,2}>\} \quad \text{and} \quad s_1 = \{\<i_{2,1}, i_{2,2}>\}.
\end{equation}
From~\eqref{ch2:th6:e11} it follows that \(i_{1,2} \neq i_{2,1}\). By \((ii_3)\), there are the unique nonzero \(x_1\) and \(x_2 \in \mc{H}\) such that
\begin{equation}\label{ch2:th6:e13}
x_1 = i_{1,1} * x_1 * i_{1,2}\quad \text{and} \quad x_2 = i_{2,1} * x_2 * i_{2,2}.
\end{equation}
By definition of \(F\), we have the equalities \(F(\varnothing) = \theta\) and \(F(s_i) = x_i\) for \(i = 1\), \(2\). Now using~\eqref{ch2:th6:e13}, condition \((ii_2)\) and \(i_{1,2} \neq i_{2,1}\) we obtain
\begin{multline*}
F(s_1)*F(s_2) = x_1 * x_2 = i_{1,1} * x_1 * (i_{1,2} * i_{2,1}) * x_2 * i_{2,2} \\
= i_{1,1} * x_1 * \theta * x_2 * i_{2,2} = \theta = F(\varnothing).
\end{multline*}

Suppose now that \(s_1\), \(s_2\), \(s_1 \circ s_2\) are nonzero elements of \((\mc{S}_{P\times P}, \circ)\). Then \eqref{ch2:th6:e12} and \eqref{ch2:th6:e13} hold with \(i_{1,2} = i_{2,1}\) and \(s_1 \circ s_2 = \{\<i_{1,1}, i_{2,2}>\}\). It implies
\begin{equation}\label{ch2:th6:e14}
F(s_1)*F(s_2) = (i_{1,1} * x_1 * i_{1,2}) * (i_{1,2} * x_2 * i_{2,2}) = x_1 * x_2.
\end{equation}
Since \(i_{1,1}\), \(i_{1,2}\), \(i_{2,2}\) are idempotent, from~\eqref{ch2:th6:e13} we have
\[
i_{1,1} * x_1 * i_{1,2} = i_{1,1} * x_1 \quad \text{and} \quad i_{2,1} * x_1 * i_{2,2} = x_1 * i_{2,2}.
\]
Now using \eqref{ch2:th6:e14} we obtain
\begin{equation}\label{ch2:th6:e15}
F(s_1)*F(s_2) = i_{1,1} * (x_1 * x_2) * i_{2,2}.
\end{equation}
By definition of \(F\), there is a unique nonzero \(y \in \mc{H}\) such that
\[
F(\{\<i_{1,1}, i_{2,2}>\}) = y = i_{1,1} * y * i_{2,2}.
\]
Thus
\[
F(s_1 \circ s_2) = F(s_1)*F(s_2)
\]
holds for all \(s_1\), \(s_2 \in \mc{S}_{P\otimes P}\). The implication \(\ref{ch2:th6:s2} \Rightarrow \ref{ch2:th6:s1}\) follows.
\end{proof}

Let us denote by \(\mb{H}_1\) the class of all semigroups \((\mc{H}, *)\) satisfying conditions \((ii_1) - (ii_4)\) from Theorem~\ref{ch2:th6}.

\begin{lemma}\label{l3.3}
Let \(X\) be a set and let \(P\) be a partition of \(X\) with \(|P| \geqslant 2\). Then the equality
\begin{equation}\label{l3.3:e1}
\mc{S}_{P \otimes P} = \{\varnothing\} \cup P \otimes P
\end{equation}
holds.
\end{lemma}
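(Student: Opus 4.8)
The plan is to exploit the multiplication rule for blocks of $P\otimes P$ already recorded as formula~\eqref{ch2:th6:e4} in the proof of Theorem~\ref{ch2:th6}, namely
\[
(X_{j_1} \times X_{j_2}) \circ (X_{j_3} \times X_{j_4}) =
\begin{cases}
\varnothing, & X_{j_2} \neq X_{j_3},\\
X_{j_1} \times X_{j_4}, & X_{j_2} = X_{j_3},
\end{cases}
\]
valid for all $X_{j_1}, X_{j_2}, X_{j_3}, X_{j_4} \in P$. Since $\varnothing \circ R = R \circ \varnothing = \varnothing$ for every binary relation $R$, this rule shows immediately that the set $\{\varnothing\} \cup P\otimes P$ is closed under the composition $\circ$, i.e.\ it is a subsemigroup of $\mc{B}_X$ (it is nonempty because $|P|\geqslant 2 \geqslant 1$). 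As it contains the generating set $P\otimes P$, it contains $\mc{S}_{P\otimes P}$; explicitly, an easy induction on $k$ using the displayed formula shows that every product $s_1 \circ \dots \circ s_k$ of blocks of $P\otimes P$ lies in $\{\varnothing\}\cup P\otimes P$, which gives the inclusion $\mc{S}_{P\otimes P} \subseteq \{\varnothing\} \cup P\otimes P$.

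For the reverse inclusion, note first that each block of $P\otimes P$ is a one-term product of generators, hence belongs to $\mc{S}_{P\otimes P}$; thus $P\otimes P \subseteq \mc{S}_{P\otimes P}$. It remains only to check that $\varnothing \in \mc{S}_{P\otimes P}$. Here is the one place the hypothesis $|P|\geqslant 2$ is used: choosing two distinct blocks $X_{j_1}, X_{j_2} \in P$ we have $X_{j_1} \cap X_{j_2} = \varnothing$, so the displayed formula yields
\[
(X_{j_1}\times X_{j_1}) \circ (X_{j_2}\times X_{j_2}) = \varnothing,
\]
exhibiting $\varnothing$ as a product of two generators. Hence $\{\varnothing\}\cup P\otimes P \subseteq \mc{S}_{P\otimes P}$, and combining the two inclusions gives~\eqref{l3.3:e1}.

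There is no real obstacle here; the argument is a short closure verification. The only point requiring any care is to make sure the multiplication rule~\eqref{ch2:th6:e4} is being quoted correctly and that the degenerate case $|P|=1$ (in which $\varnothing$ need not arise and the statement would fail) is genuinely excluded by the hypothesis — both of which are handled as above.
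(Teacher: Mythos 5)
Your proof is correct and follows the same route as the paper, which simply cites formula~\eqref{ch2:th6:e4}: you spell out the closure of \(\{\varnothing\}\cup P\otimes P\) under \(\circ\), the containment of the generators, and the use of \(|P|\geqslant 2\) to produce \(\varnothing\) as a product of two disjoint idempotent blocks. Nothing is missing.
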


\begin{proof}
It follows from formula~\eqref{ch2:th6:e4}.
\end{proof}

\begin{remark}\label{r3.4}
Equality~\eqref{l3.3:e1} does not hold if \(|P| = 1\). In this case we have \(\mc{S}_{P \otimes P} = P \otimes P\).
\end{remark}

\begin{corollary}\label{c3.6}
Every semigroup \((\mc{H}, *) \in \mb{H}_1\) admits a \(d\)-transitive monomorphism \(\mc{H} \to \mc{B}_X\) with a suitable set \(X\).
\end{corollary}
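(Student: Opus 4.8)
The plan is to read the corollary off from Theorem~\ref{ch2:th6} by checking that the tautological inclusion \(\mc{S}_{P\otimes P}\hookrightarrow\mc{B}_X\) is already a \(d\)-transitive monomorphism in the sense of Definition~\ref{d1.1}.

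First I would take an arbitrary \((\mc{H},*)\in\mb{H}_1\) and apply the implication \(\ref{ch2:th6:s2}\Rightarrow\ref{ch2:th6:s1}\) of Theorem~\ref{ch2:th6} to produce a nonempty set \(X\), a partition \(P\) of \(X\), and an isomorphism \(\varphi\colon(\mc{H},*)\to(\mc{S}_{P\otimes P},\circ)\). Since \(\mc{S}_{P\otimes P}\) is a subsemigroup of \(\mc{B}_X\), the inclusion map \(\iota\colon\mc{S}_{P\otimes P}\to\mc{B}_X\) is an injective homomorphism, so the composition \(\Phi:=\iota\circ\varphi\colon\mc{H}\to\mc{B}_X\) is a monomorphism.

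Next I would exhibit the generating set demanded by Definition~\ref{d1.1}. Put \(A:=\varphi^{-1}(P\otimes P)\subseteq\mc{H}\). Because \(P\otimes P\) is a set of generators of \(\mc{S}_{P\otimes P}\) and \(\varphi\) is an isomorphism, \(A\) is a set of generators of \((\mc{H},*)\); moreover \(\{\Phi(a)\colon a\in A\}=P\otimes P\), which is a partition of \(X^2\) by Proposition~\ref{p1}. If \(|\mc{H}|=1\) the semigroup \(\mc{H}\) has no zero, so nothing further is required. If \(|\mc{H}|\geqslant2\), then condition \((ii_1)\) gives a zero \(\theta\) of \(\mc{H}\); I would note that in the construction underlying \(\ref{ch2:th6:s2}\Rightarrow\ref{ch2:th6:s1}\) one has \(X=E(\mc{H})\), and condition \((ii_4)\) forces the existence of nontrivial idempotents, so \(|P|=|E(\mc{H})|\geqslant2\). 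By Lemma~\ref{l3.3} the empty relation \(\varnothing\) then belongs to \(\mc{S}_{P\otimes P}\) and is its zero element; since isomorphisms carry zero to zero, \(\Phi(\theta)=\iota(\varphi(\theta))=\varnothing\). Hence \(\Phi\) satisfies all the requirements of Definition~\ref{d1.1}.

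The argument is essentially bookkeeping on top of Theorem~\ref{ch2:th6}, so there is no genuine obstacle. The only point worth a careful sentence is the verification that \(|\mc{H}|\geqslant2\) really does entail \(|P|\geqslant2\) in that construction, which is exactly what guarantees that \(\varnothing\in\mc{S}_{P\otimes P}\) and hence that the zero of \(\mc{H}\) is sent to \(\varnothing\).
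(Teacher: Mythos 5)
Your argument is correct and follows essentially the same route as the paper: compose the isomorphism \(\mc{H}\to\mc{S}_{P\otimes P}\) supplied by Theorem~\ref{ch2:th6} with the inclusion \(\mc{S}_{P\otimes P}\hookrightarrow\mc{B}_X\), and verify Definition~\ref{d1.1} via Lemma~\ref{l3.3}. You are in fact slightly more careful than the published proof, which leaves implicit the check that \(|\mc{H}|\geqslant 2\) forces \(|P|\geqslant 2\), so that \(\varnothing\in\mc{S}_{P\otimes P}\) and the zero of \(\mc{H}\) is sent to \(\varnothing\).
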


\begin{proof}
By Theorem~\ref{ch2:th6} for every \((\mc{H}, *) \in \mb{H}_1\) there are \(X\) and \(P \in \mathbf{\Pi}(X)\) such that \((\mc{H}, *)\) and \((\mc{S}_{P \otimes P}, \circ)\) are isomorphic. From Lemma~\ref{l3.3} and Definition~\ref{d1.1} it follows that the identity mapping \(\Id \colon \mc{S}_{P \otimes P} \to \mc{B}_X\), \(\Id(s) = s\) for every \(s \in \mc{S}_{P \otimes P}\), is a \(d\)-transitive monomorphism for every \(P \in \mathbf{\Pi}(X)\). Consequently, if \(\Phi \colon \mc{H} \to \mc{S}_{P \otimes P}\) is an isomorphism, then the mapping
\[
\mc{H} \xrightarrow{\Phi} \mc{S}_{P \otimes P} \xrightarrow{\Id} \mc{B}_X
\]
is a \(d\)-transitive monomorphism.
\end{proof}

Analyzing the proof of Theorem~\ref{ch2:th6} and using Example~\ref{ex1.5} we obtain the next corollary.

\begin{corollary}\label{c3.5}
The following conditions are equivalent for every semigroup \(\mc{H}\).
\begin{enumerate}
\item \(\mc{H} \in \mb{H}_1\).
\item There is a nonempty set \(X\) such that \(\mc{H}\) and \(\mc{S}_{P_{\Delta_{X^2}}}\) are isomorphic, where \(P_{\Delta_{X^2}}\) is a partition of \(X^{2}\) corresponding to the diagonal on \(X^{2}\).
\end{enumerate}
\end{corollary}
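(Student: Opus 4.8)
The plan is to deduce Corollary~\ref{c3.5} directly from Theorem~\ref{ch2:th6}, using the single observation recorded in Example~\ref{ex1.5}: for a nonempty set \(X\) one has \(P_{\Delta_{X^{2}}} = P_{\Delta_X} \otimes P_{\Delta_X}\). Consequently the semigroup \(\mc{S}_{P_{\Delta_{X^{2}}}}\) is precisely of the special form \(\mc{S}_{P \otimes P}\) with \(P = P_{\Delta_X} \in \mathbf{\Pi}(X)\), so both implications of the corollary become restrictions of the corresponding implications of Theorem~\ref{ch2:th6}.

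For the implication \((ii) \Rightarrow (i)\) I would argue as follows. Suppose \(\mc{H}\) is isomorphic to \(\mc{S}_{P_{\Delta_{X^{2}}}}\) for some nonempty \(X\). By Example~\ref{ex1.5} this semigroup equals \(\mc{S}_{P_{\Delta_X} \otimes P_{\Delta_X}}\), so statement~\ref{ch2:th6:s1} of Theorem~\ref{ch2:th6} holds for \(\mc{H}\) with the partition \(P_{\Delta_X}\) of \(X\). The implication \(\ref{ch2:th6:s1} \Rightarrow \ref{ch2:th6:s2}\) of that theorem then gives that \(\mc{H}\) satisfies conditions \((ii_1)\)--\((ii_4)\), i.e., \(\mc{H} \in \mb{H}_1\).

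For the converse \((i) \Rightarrow (ii)\) I would revisit the proof of \(\ref{ch2:th6:s2} \Rightarrow \ref{ch2:th6:s1}\) in Theorem~\ref{ch2:th6}. There, starting from \(\mc{H} \in \mb{H}_1\), one sets \(E = E(\mc{H})\) and takes \(P\) to be the partition of \(E\) into one-point blocks, which is exactly \(P = P_{\Delta_E}\); the construction yields an isomorphism \(\mc{H} \cong \mc{S}_{P \otimes P} = \mc{S}_{P_{\Delta_E} \otimes P_{\Delta_E}}\). Here \(E \neq \varnothing\): if \(|\mc{H}| = 1\) the unique element is idempotent, and if \(|\mc{H}| \geqslant 2\) the zero \(\theta\) furnished by \((ii_1)\) is idempotent, so \(P_{\Delta_E}\) is a genuine partition of the nonempty set \(E\). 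Applying Example~\ref{ex1.5} once more, \(P_{\Delta_E} \otimes P_{\Delta_E} = P_{\Delta_{E^{2}}}\), whence \(\mc{H} \cong \mc{S}_{P_{\Delta_{E^{2}}}}\), and statement \((ii)\) holds with \(X = E\).

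I do not expect a genuine obstacle: the entire content has already been distilled into Theorem~\ref{ch2:th6}. The only points needing a word of care are the identification of the singleton partition used inside the proof of Theorem~\ref{ch2:th6} with \(P_{\Delta_E}\), the non-emptiness of \(E\) noted above, and the implicit use --- exactly as in the proof of Theorem~\ref{ch2:th6} --- of the fact that conditions \((ii_1)\)--\((ii_4)\) are invariant under isomorphism of semigroups.
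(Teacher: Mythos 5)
Your proof is correct and follows exactly the route the paper intends: the paper justifies Corollary~\ref{c3.5} only by the remark that it is obtained ``analyzing the proof of Theorem~\ref{ch2:th6} and using Example~\ref{ex1.5}'', and your argument simply spells this out, identifying the singleton partition of \(E=E(\mc{H})\) used in the proof of \(\ref{ch2:th6:s2}\Rightarrow\ref{ch2:th6:s1}\) with \(P_{\Delta_E}\) and applying \(P_{\Delta_E}\otimes P_{\Delta_E}=P_{\Delta_{E^2}}\). The extra care you take (non-emptiness of \(E\), invariance of \((ii_1)\)--\((ii_4)\) under isomorphism) is sound and welcome but does not change the approach.
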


The last corollary claims that a semigroup \(\mc{H}\) belongs to \(\mb{H}_1\) if and only if there is a nonempty set \(X\) such that \(\mc{H}\) is generated by set of all single-point subsets of \(X^{2}\).

\begin{corollary}\label{c2}
Let \(\mc{H} \in \mb{H}_1\) and \(\mc{S} \in \mb{H}_1\) hold. Then \(\mc{H}\) and \(\mc{S}\) are isomorphic semigroups if and only if \(|\mc{H}| = |\mc{S}|\).
\end{corollary}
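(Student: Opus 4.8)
The plan is to exploit the characterization of $\mb{H}_1$ provided by Corollary~\ref{c3.5}: a semigroup $\mc{H}$ lies in $\mb{H}_1$ precisely when it is isomorphic to $\mc{S}_{P_{\Delta_{X^2}}}$ for some nonempty set $X$, i.e. to the semigroup generated by all singletons $\{\<x,y>\}$ inside $\mc{B}_X$. By Lemma~\ref{l3.3} (together with Remark~\ref{r3.4} for the degenerate case), this semigroup is $\{\varnothing\}\cup (P_{\Delta_X}\otimes P_{\Delta_X}) = \{\varnothing\}\cup \{\{\<x,y>\}\colon \<x,y>\in X^2\}$ when $|X|\ge 2$, and equals the one-point set when $|X|=1$. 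So the cardinality of such a semigroup is $|X^2|+1 = |X|^2+1$ when $|X|\ge 2$, and $1$ when $|X|=1$. Since this description is entirely determined by $|X|$, the ``only if'' direction is immediate: isomorphic semigroups have equal cardinality.

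For the ``if'' direction, suppose $\mc{H},\mc{S}\in\mb{H}_1$ with $|\mc{H}|=|\mc{S}|$. By Corollary~\ref{c3.5} pick nonempty sets $X$ and $Y$ with $\mc{H}\cong\mc{S}_{P_{\Delta_{X^2}}}$ and $\mc{S}\cong\mc{S}_{P_{\Delta_{Y^2}}}$. It suffices to show $\mc{S}_{P_{\Delta_{X^2}}}\cong\mc{S}_{P_{\Delta_{Y^2}}}$. First I would observe that $|\mc{S}_{P_{\Delta_{X^2}}}| = |\mc{S}_{P_{\Delta_{Y^2}}}|$ forces $|X|=|Y|$. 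Indeed if both have at least two elements, then $|X|^2+1 = |Y|^2+1$, so $|X^2|=|Y^2|$, hence $|X|=|Y|$ (for infinite cardinals $\kappa$ one has $\kappa^2=\kappa$, so equal squares give equal cardinals; for finite sets it is elementary). The case $|X|=1$ gives a one-point semigroup, which has cardinality $1$, and $|\mc{S}_{P_{\Delta_{Y^2}}}|=1$ forces $|Y|=1$ as well, since $|Y|\ge 2$ would yield at least $5$ elements.

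Having $|X|=|Y|$, fix a bijection $g\colon X\to Y$. It induces a bijection $g\times g\colon X^2\to Y^2$, and hence a bijection $P_{\Delta_{X^2}}\to P_{\Delta_{Y^2}}$ by $\{\<x_1,x_2>\}\mapsto\{\<g(x_1),g(x_2)>\}$, which we extend to $F\colon\mc{S}_{P_{\Delta_{X^2}}}\to\mc{S}_{P_{\Delta_{Y^2}}}$ by sending $\varnothing$ to $\varnothing$ (omitting this clause when the semigroups are singletons). I would then check that $F$ is a homomorphism by direct computation using~\eqref{ch2:th6:e4}: the composition $\{\<x_1,x_2>\}\circ\{\<x_3,x_4>\}$ equals $\{\<x_1,x_4>\}$ if $x_2=x_3$ and $\varnothing$ otherwise, and the analogous identity holds in $\mc{B}_Y$; since $g$ is injective, $x_2=x_3$ if and only if $g(x_2)=g(x_3)$, so $F$ preserves both cases. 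Thus $F$ is an isomorphism, and composing with the isomorphisms from Corollary~\ref{c3.5} gives $\mc{H}\cong\mc{S}$.

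I do not expect a genuine obstacle here; the only point requiring a little care is the cardinal-arithmetic step $|X^2|=|Y^2|\Rightarrow|X|=|Y|$ for infinite $X$, which relies on $\kappa\cdot\kappa=\kappa$ for infinite $\kappa$ (a standard consequence of the axiom of choice), and the bookkeeping of the degenerate cases $|X|=1$ and $|X|\ge 2$ so that the ``$+1$'' from the adjoined $\varnothing$ is handled consistently on both sides.
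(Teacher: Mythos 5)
Your proof is correct and follows exactly the route the paper intends (the paper states Corollary~\ref{c2} without proof, immediately after Corollary~\ref{c3.5} and Lemma~\ref{l3.3}, which are precisely the ingredients you use): reduce to the canonical models \(\mc{S}_{P_{\Delta_{X^2}}}\), observe their cardinality is \(|X|^2+1\) (or \(1\)), and transport a bijection \(X\to Y\) to an isomorphism via formula~\eqref{ch2:th6:e4}. The cardinal-arithmetic and degenerate-case bookkeeping you flag is handled adequately.
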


Using Lemma~\ref{l3.3} and Remark~\ref{r3.4}, we obtain the following.

\begin{corollary}\label{c2.8}
If \((\mc{H}, *) \in \mb{H}_1\) is finite, then \(|\mc{H}| \neq 2\) and there is a nonnegative integer \(n\) such that \(|\mc{H}| = n^2 + 1\). Conversely, if \(n \neq 1\) is a nonnegative integer, then there is \((\mc{H}, *) \in \mb{H}_1\) such that \(|\mc{H}| = n^2 + 1\).
\end{corollary}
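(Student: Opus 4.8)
The plan is to deduce everything from the structural description of $\mb{H}_1$ given by Theorem~\ref{ch2:th6} together with the cardinality formulas in Lemma~\ref{l3.3} and Remark~\ref{r3.4}. By Theorem~\ref{ch2:th6}, a semigroup lies in $\mb{H}_1$ exactly when it is isomorphic to $(\mc{S}_{P\otimes P},\circ)$ for some nonempty set $X$ and some $P\in\mathbf{\Pi}(X)$, so the admissible values of $|\mc{H}|$ for $\mc{H}\in\mb{H}_1$ are precisely the values of $|\mc{S}_{P\otimes P}|$.

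First I would record the count of $|\mc{S}_{P\otimes P}|$. If $|P|=1$, then $\mc{S}_{P\otimes P}=P\otimes P$ by Remark~\ref{r3.4}, and this is a one-element set, so $|\mc{S}_{P\otimes P}|=1$. If $|P|\geqslant 2$, then $\mc{S}_{P\otimes P}=\{\varnothing\}\cup P\otimes P$ by Lemma~\ref{l3.3}; each block $X_{j_1}\times X_{j_2}$ of $P\otimes P$ is a nonempty subset of $X^{2}$ (being a product of nonempty blocks of $P$), so $\varnothing\notin P\otimes P$, and since distinct pairs $\<j_1,j_2>$ yield distinct blocks $X_{j_1}\times X_{j_2}$, one obtains $|\mc{S}_{P\otimes P}|=1+|P|^{2}$.

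For the direct assertion, let $(\mc{H},*)\in\mb{H}_1$ be finite and pick $X$ and $P$ with $\mc{H}$ isomorphic to $\mc{S}_{P\otimes P}$. Then $\mc{S}_{P\otimes P}$ is finite, so by the preceding count $P$ is finite; set $n:=0$ when $|P|=1$ and $n:=|P|$ when $|P|\geqslant 2$. In both cases $n$ is a nonnegative integer with $n\neq 1$ and $|\mc{H}|=n^{2}+1$; in particular $|\mc{H}|\neq 1^{2}+1=2$.

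For the converse, let $n\neq 1$ be a nonnegative integer. If $n=0$, take a one-point set $X$ and $P:=\{X\}$, so that $|\mc{S}_{P\otimes P}|=1=n^{2}+1$. If $n\geqslant 2$, take $X:=\{1,\dots,n\}$ with $P$ the partition of $X$ into singletons, so that $|P|=n$ and $|\mc{S}_{P\otimes P}|=n^{2}+1$. In either case, applying Theorem~\ref{ch2:th6} with the identity isomorphism shows that $\mc{S}_{P\otimes P}$ satisfies $(ii_1)$--$(ii_4)$, hence $\mc{S}_{P\otimes P}\in\mb{H}_1$. There is no serious obstacle in this argument; the only points needing attention are that $\varnothing\notin P\otimes P$ (so that the union in Lemma~\ref{l3.3} is disjoint and contributes the summand $1$) and the bookkeeping of the edge cases $n=0$ and $n=1$.
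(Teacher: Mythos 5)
Your proof is correct and follows exactly the route the paper intends: the paper derives this corollary directly from Lemma~\ref{l3.3} and Remark~\ref{r3.4} (via Theorem~\ref{ch2:th6}), which is precisely your counting argument, including the careful handling of the cases \(|P|=1\) (giving \(n=0\)) and \(|P|\geqslant 2\) (giving \(n=|P|\)), and the observation that \(n=1\) is thereby excluded so \(|\mc{H}|\neq 2\).
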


Condition \((ii_2)\) of Theorem~\ref{ch2:th6} implies that, for every \((\mc{H}, *) \in \mb{H}_1\), the set \(E = E(\mc{H})\) of all idempotent elements of \((\mc{H}, *)\) is a subsemigroup of \(\mc{H}\). The following proposition shows that a \(d\)-transitive monomorphism \(E(\mc{H}) \to \mc{B}_{X}\) exists if and only if \(|\mc{H}| = 1\).

\begin{proposition}\label{p3.7}
Let \((\mc{H}, *)\) be a semigroup with a zero element \(\theta\). Suppose that all elements of \((\mc{H}, *)\) are idempotent, and the equality
\begin{equation}\label{p3.7:e1}
e_1 * e_2 = \theta
\end{equation}
holds for all distinct \(e_1\), \(e_2 \in \mc{H}\). Then \((\mc{H}, *)\) does not admit any \(d\)-transitive monomorphism of the form \(\mc{H} \to \mc{B}_{X}\).
\end{proposition}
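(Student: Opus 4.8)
The plan is to argue by contradiction. Suppose \(\Phi\colon \mc{H}\to\mc{B}_{X}\) is a \(d\)-transitive monomorphism, witnessed by a set \(A\) of generators of \((\mc{H},*)\) for which \(\{\Phi(a)\colon a\in A\}\) is a partition of \(X^{2}\) and \(\Phi(\theta)=\varnothing\). First I would record three preliminary facts. Since \((\mc{H},*)\) possesses a zero it has at least two elements, and \(X\) is nonempty (otherwise \(\{\Phi(a)\colon a\in A\}\) could not be a partition of \(X^{2}\)), so \(X^{2}\neq\varnothing\). Because the blocks of a partition are nonempty, \(\Phi(\theta)=\varnothing\) cannot be one of them, hence \(\theta\notin A\); in particular every element of \(A\) is a \emph{nonzero} idempotent of \(\mc{H}\). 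Finally \(|A|\geqslant 2\): the element \(\theta\) is a product \(a_{1}*\dots*a_{k}\) of elements of \(A\), but if all \(a_{i}\) coincided, say with \(a\in A\), idempotency would give \(a_{1}*\dots*a_{k}=a\neq\theta\); so at least two distinct elements of \(A\) occur in this product.

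Next I would translate the multiplicative structure of \(\mc{H}\) into statements about composition of relations. Since \(\Phi\) is a homomorphism and \(a*c=\theta\) holds for all distinct \(a,c\in\mc{H}\), we obtain
\[
\Phi(a)\circ\Phi(c)=\Phi(a*c)=\Phi(\theta)=\varnothing \qquad \text{for all distinct } a,c\in A,
\]
whereas idempotency of each \(a\in A\) gives \(\Phi(a)\circ\Phi(a)=\Phi(a*a)=\Phi(a)\).

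The heart of the argument is to show that \emph{every} block \(\Phi(a)\), \(a\in A\), equals the whole of \(X^{2}\). Fix \(a\in A\) and pick \(\<x,y>\in\Phi(a)\) (the block is nonempty). Let \(\Phi(d)\) be the block containing \(\<x,x>\); using the intermediate point \(x\) we see \(\<x,y>\in\Phi(d)\circ\Phi(a)\), so this composition is nonempty, which by the displayed identities forces \(d=a\), i.e.\ \(\<x,x>\in\Phi(a)\). Now I would ``inflate'' this diagonal point. For any \(z\in X\), let \(\Phi(c)\) be the block containing \(\<x,z>\); using the intermediate point \(x\) again, \(\<x,z>\in\Phi(a)\circ\Phi(c)\), so \(a=c\) and \(\<x,z>\in\Phi(a)\); thus \(\{x\}\times X\subseteq\Phi(a)\). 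Composing on the other side through \(x\) shows symmetrically \(X\times\{x\}\subseteq\Phi(a)\). Finally, for an arbitrary \(\<u,v>\in X^{2}\) we have \(\<u,x>\in\Phi(a)\) and \(\<x,v>\in\Phi(a)\), hence \(\<u,v>\in\Phi(a)\circ\Phi(a)=\Phi(a)\). Therefore \(\Phi(a)=X^{2}\).

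Consequently the partition \(\{\Phi(a)\colon a\in A\}\) consists of the single block \(X^{2}\). But \(\Phi\) is injective and \(|A|\geqslant 2\), so \(a\mapsto\Phi(a)\) maps \(A\) onto a set with at least two elements — a contradiction. Hence \((\mc{H},*)\) admits no \(d\)-transitive monomorphism \(\mc{H}\to\mc{B}_{X}\). The main point requiring care is the inflation step: at each application of the ``distinct blocks compose to \(\varnothing\)'' identity one must verify that the candidate block, composed with \(\Phi(a)\), genuinely captures the point in question — which is always arranged by choosing the shared coordinate \(x\) as the intermediate element — so that the identity applies and pins the block down to \(\Phi(a)\).
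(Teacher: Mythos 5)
Your proof is correct and follows essentially the same strategy as the paper: both arguments hinge on the fact that distinct generators \(a \neq c\) in \(A\) give \(\Phi(a) \circ \Phi(c) = \Phi(\theta) = \varnothing\), and use this together with the partition property to show that every block contains a diagonal point \(\<x, x>\). The only divergence is the finish: the paper shows \(\Phi(a) = X_a^{2}\) for a partition \(\{X_a \colon a \in A\}\) of \(X\) and concludes from \(X^{2} = \bigcup_{a} X_a^{2}\) that \(|A| = 1\), whereas you inflate each block to all of \(X^{2}\) and contradict \(|A| \geqslant 2\) (which you justify cleanly via the generation of \(\theta\) and idempotency) together with injectivity of \(\Phi\); both endgames are valid.
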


\begin{proof}
Suppose contrary that there is a \(d\)-transitive monomorphism \(\Phi \colon \mc{H} \to \mc{B}_{X}\).

Let \(A\) be a set of generators of \((\mc{H}, *)\) such that \(\{\Phi(a) \colon a \in A\}\) is a partition of \(X^{2}\). Let us define a subset \(A_1\) of the set \(A\) by the rule: a point \(a \in A\) belongs to \(A_1\) if and only if there is \(x_1 \in X\) such that
\[
\<x_1, x_1> \in \Phi(a).
\]
We claim that the equality \(A_1 = A\) holds. Indeed, suppose the set \(A \setminus A_1\) is nonempty. Let \(b \in A \setminus A_1\). Then \(\Phi(b)\) is a block of the partition \(\{\Phi(a) \colon a \in A\}\). Hence, \(\Phi(b)\) is a nonempty subset of \(X^{2}\). Let \(x_1\), \(x_2\) be points of \(X\) such that
\begin{equation}\label{p3.7:e2}
\<x_1, x_2> \in \Phi(b).
\end{equation}
Then there is \(a_1 \in A_1\) for which
\begin{equation}\label{p3.7:e3}
\<x_1, x_1> \in \Phi(a_1)
\end{equation}
holds. Since \(A_1 \cap (A \setminus A_1) = \varnothing\), we have \(a_1 \neq b\). From~\eqref{p3.7:e1}, it follows that \(a_1 * b = \theta\) and, by Definition~\ref{d1.1},
\begin{equation}\label{p3.7:e4}
\Phi(a_1 * b) = \Phi(\theta) = \varnothing.
\end{equation}
Since \(\Phi\) is a homomorphism, \(\Phi(a_1 * b) = \Phi(a_1) \circ \Phi(b)\) holds. From~\eqref{p3.7:e2} and \eqref{p3.7:e3} it follows that
\[
\<x_1, x_2> \in \Phi(a_1) \circ \Phi(b),
\]
that contradicts~\eqref{p3.7:e4}.

For every \(a \in A\) define a subset \(X_{a}\) of the set \(X\) as
\[
X_{a} := \{x \in X \colon \<x, x> \in \Phi(a)\}.
\]
Since \(\{\Phi(a) \colon a \in A\}\) is a partition of \(X^{2}\) and for every \(a \in A\) we have
\[
\Delta_{X} \cap \Phi(a) \neq \varnothing,
\]
the set \(\{X_{a} \colon a \in A\}\) is a partition of \(X\). Arguing as above, we see that the equality
\begin{equation}\label{p3.7:e5}
\Phi(a) = X_{a}^{2}
\end{equation}
holds for every \(a \in A\). The equality
\[
X^{2} = \bigcup_{a \in A} \Phi(a)
\]
and~\eqref{p3.7:e5} imply that
\[
X^{2} = \bigcup_{a \in A} X_{a}^{2}.
\]
The last equality holds if and only if \(|A| = 1\). The set \(A\) is a set of generators of \((\mc{H}, *)\). Consequently, we have \(|\mc{H}| = 1\), contrary to \(\theta \in \mc{H}\).
\end{proof}

Recall that a semigroup \((\mc{H}, *)\) is a \emph{group with zero} if \((\mc{H}, *)\) contains a zero \(\theta\) and the set \(\mc{H} \setminus \{\theta\}\) is a group with respect to the multiplication \(*\). The following proposition is almost evident.

\begin{proposition}\label{p3.8}
Let \((\mc{H}, *)\) be a group with zero. Then \((\mc{H}, *)\) does not admit any \(d\)-transitive monomorphism of the form \(\mc{H} \to \mathcal{B}_{X}\).
\end{proposition}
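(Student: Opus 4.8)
The plan is to argue by contradiction, and the whole argument rests on a single observation: the zero $\theta$ of a group with zero cannot belong to any generating set of the kind demanded by Definition~\ref{d1.1}.

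So suppose, contrary to the assertion, that $\Phi \colon \mc{H} \to \mc{B}_{X}$ is a $d$-transitive monomorphism, and let $A$ be a set of generators of $(\mc{H}, *)$ such that $Q := \{\Phi(a) \colon a \in A\}$ is a partition of $X^{2}$. Since a group with zero does possess a zero element, namely $\theta$, Definition~\ref{d1.1} forces $\Phi(\theta) = \varnothing$. Every block of a partition is nonempty by definition, so $\varnothing \notin Q$; consequently $\theta \notin A$, i.e. $A \subseteq \mc{H} \setminus \{\theta\}$.

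Write $G := \mc{H} \setminus \{\theta\}$. By hypothesis $G$ is a group under $*$, hence in particular $G$ is closed under $*$, so every finite product $a_{1} * \ldots * a_{k}$ with $a_{1}, \ldots, a_{k} \in A \subseteq G$ again lies in $G$. But $A$ generates $(\mc{H}, *)$, so every element of $\mc{H}$ is such a product; therefore $\mc{H} \subseteq G = \mc{H} \setminus \{\theta\}$, which is absurd because $\theta \in \mc{H}$. This contradiction completes the proof.

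There is no genuinely hard step here, which is precisely why the statement is ``almost evident''. The only points that call for a moment's attention are that the defining clause of a $d$-transitive monomorphism really does apply (a group with zero has a zero, so $\Phi(\theta) = \varnothing$ is mandatory and $\varnothing$ cannot be one of the partition blocks), and that $G = \mc{H} \setminus \{\theta\}$ is nonempty and closed under $*$, so the final inclusion chain is not vacuous; both are immediate from the definition of a group with zero together with the fact that such a semigroup has at least two elements.
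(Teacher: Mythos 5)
Your proof is correct and follows essentially the same route as the paper's: both hinge on the observation that \(\Phi(\theta) = \varnothing\) cannot be a block of the partition \(\{\Phi(a) \colon a \in A\}\), so the generating set \(A\) must lie in \(G = \mc{H} \setminus \{\theta\}\). The only difference is the final step --- the paper writes \(\theta\) as a product of generators and multiplies by inverses to conclude \(\theta = e\), whereas you invoke closure of \(G\) under \(*\) directly, a marginally more elementary finish that would in fact apply to any semigroup with zero whose nonzero elements form a subsemigroup.
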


\begin{proof}
Let \(e\) be the identity of the group \(\mc{H} \setminus \{\theta\}\). If \(\Phi \colon \mc{H} \to \mathcal{B}_{X}\) is a \(d\)-transitive monomorphism with some nonempty set \(X\), then \(\Phi(\theta) = \varnothing\) and there is a set \(A\) of generators of \((\mc{H}, *)\) such that \(P = \{\Phi(a) \colon a \in A\}\) is a partition of \(X^2\). In particular, the equality
\begin{equation}\label{p3.8:e1}
\theta = a_1 * \ldots * a_n
\end{equation}
holds with some \(a_1\), \(\ldots\), \(a_n \in A\). Since every block \(\Phi(a)\) of \(P\) is nonempty subset of \(X^{2}\), equality \(\Phi(\theta) = \varnothing\) implies that \(A\) is a subset of \(\mc{H} \setminus \{\theta\}\). Hence, every \(a \in A\) has an inverse element \(a^{-1} \in \mc{H} \setminus \{\theta\}\). Using equality~\eqref{p3.8:e1}, we obtain
\[
\theta = a_n^{-1} * \ldots a_1^{-1} * \theta = (a_n^{-1} * \ldots a_1^{-1}) * (a_1 * \ldots * a_n) = e.
\]
Thus, \(\theta = e\) and, consequently, \(\theta \in \mc{H} \setminus \{\theta\}\), contrary to the definition.
\end{proof}

Corollary~\ref{c3.6} and Proposition~\ref{p3.7} show, in particular, that the existence of \(d\)-transitive monomorphism is not, in general, a hereditary property of semigroups. Now we consider an example of a semigroup for which this property is hereditary. Recall that a semigroup \((\mc{H}, *)\) is a \emph{right zero semigroup} if \(x*y = y\) holds for all \(x\), \(y \in \mc{H}\). The \emph{left zero semigroups} are defined in a dual way (see, for example, \cite[p.~4]{Clifford1961}).

\begin{proposition}\label{p3.11}
Let \((\mc{H}, *)\) be a right (left) zero semigroup. Then every subsemigroup \(\mc{S}\) of \((\mc{H}, *)\) admits a \(d\)-transitive monomorphism \(\mc{S} \to \mc{B}_{X}\) for a suitable set \(X\).
\end{proposition}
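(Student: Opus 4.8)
The plan is to realize any right zero semigroup concretely as a semigroup of binary relations of a special type, and then observe that this realization restricts nicely to subsemigroups. Let $(\mc{H}, *)$ be a right zero semigroup, so $x * y = y$ for all $x$, $y \in \mc{H}$; note every element is idempotent and, by the usual convention, $\mc{H}$ has no zero when $|\mc{H}| \geqslant 2$, so condition in Definition~\ref{d1.1} about $\Phi(\theta)$ is vacuous. The key observation is that for a set $Y$, the ``full rectangles over a fixed right-hand column'' behave like a right zero semigroup under $\circ$: if we fix a partition of a set and look at relations of the form $Y \times X_j$, then $(Y \times X_{j_1}) \circ (Y \times X_{j_2})$ equals $Y \times X_{j_2}$ provided $Y \times X_{j_1}$ has nonempty intersection with the ``diagonal strip'' needed to compose — more precisely $(Y \times X_{j_1}) \circ (Y \times X_{j_2}) = Y \times X_{j_2}$ as long as $X_{j_1} \neq \varnothing$, which always holds.

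Concretely, first I would take $X := \mc{H}$ itself and define, for each $a \in \mc{H}$, the relation $\Phi(a) := X \times \{a\} = \{\langle x, a\rangle \colon x \in X\}$. Then $\{\Phi(a) \colon a \in \mc{H}\}$ is exactly $P_{\Delta_X} \otimes P$ where $P = P_{X^2}$ is... — more cleanly, $\{X \times \{a\} \colon a \in X\}$ is literally a partition of $X^2 = X \times X$ into ``vertical columns,'' hence a partition of $X^2$. Second, I would compute the composition: $\langle x, y\rangle \in \Phi(a) \circ \Phi(b)$ iff there is $z$ with $\langle x, z\rangle \in X \times \{a\}$ and $\langle z, b\rangle$... wait, $\langle z, y\rangle \in \Phi(b) = X \times \{b\}$, which forces $y = b$ and requires $z = a \in X$, always true. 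Hence $\Phi(a) \circ \Phi(b) = X \times \{b\} = \Phi(b) = \Phi(a * b)$, so $\Phi$ is a homomorphism. Third, $\Phi$ is injective because $X \times \{a\} = X \times \{b\}$ forces $a = b$ (as $X = \mc{H} \neq \varnothing$). Thus $\Phi$ is a $d$-transitive monomorphism of $(\mc{H}, *)$ with generating set $A = \mc{H}$.

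For a subsemigroup $\mc{S} \subseteq \mc{H}$, I would not simply restrict $\Phi$, because $\{\Phi(a) \colon a \in \mc{S}\}$ covers only $X \times \mc{S}$, not all of $X^2$. Instead set $X := \mc{S}$ and repeat the construction with $\mc{S}$ in place of $\mc{H}$: define $\Phi_{\mc{S}}(a) := \mc{S} \times \{a\}$ for $a \in \mc{S}$; since $\mc{S}$ is itself a right zero semigroup (the identity $x * y = y$ is inherited), the argument above applies verbatim and yields a $d$-transitive monomorphism $\Phi_{\mc{S}} \colon \mc{S} \to \mc{B}_{\mc{S}}$. The left zero case is handled by the mirror construction: take $X := \mc{S}$ and $\Phi_{\mc{S}}(a) := \{a\} \times \mc{S}$, so that $\Phi_{\mc{S}}(a) \circ \Phi_{\mc{S}}(b) = \{a\} \times \mc{S} = \Phi_{\mc{S}}(a) = \Phi_{\mc{S}}(a * b)$, matching $x * y = x$.

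The only subtlety — and the one point I would state carefully rather than the ``main obstacle,'' since the argument is genuinely short — is the observation that a subsemigroup of a right (left) zero semigroup is again a right (left) zero semigroup, so that the property in question is trivially hereditary here and one simply re-runs the construction on each $\mc{S}$ with $X = \mc{S}$; there is no need to extend a representation of $\mc{H}$ to handle $\mc{S}$. One should also note the degenerate case $|\mc{S}| = 1$: then $\Phi_{\mc{S}}$ sends the unique element $e$ to $\{\langle e, e\rangle\} = \mc{S} \times \mc{S}$, which is a one-block partition of $\mc{S}^2$, and $\{e\} \cong (\mc{S}_{P}, \circ)$ trivially, consistent with the convention that a one-element semigroup is a monoid with no zero.
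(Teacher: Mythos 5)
Your proposal is correct and follows essentially the same route as the paper: reduce to the case \(\mc{S} = \mc{H}\) by noting that subsemigroups of right (left) zero semigroups are again right (left) zero semigroups, take \(X = \mc{H}\), and send \(a\) to the column \(X \times \{a\}\) (row \(\{a\} \times X\) in the left case), which yields a partition of \(X^{2}\) and a monomorphism since \((X \times \{a\}) \circ (X \times \{b\}) = X \times \{b\}\). Your extra remarks on the absence of a zero and the one-element case are consistent with the paper's conventions.
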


\begin{proof}
Suppose \((\mc{H}, *)\) is a right zero semigroup. Since every subsemigroup of \((\mc{H}, *)\) is also a right zero semigroup, it suffices to find a set \(X\) and \(d\)-transitive monomorphism \(\Phi \colon \mathcal{H} \to \mathcal{B}_X\). Write \(X := \mathcal{H}\) and
\begin{equation}\label{p3.11:e1}
P := \{X \times \{x\} \colon x \in X\}.
\end{equation}
Then \(P\) is a partition of \(X^{2}\). Let us define a mapping \(\Phi \colon \mathcal{H} \to \mathcal{B}_X\) as \(\Phi(x) = X \times \{x\}\) for every \(x \in \mathcal{H}\). Then the equalities
\[
\Phi(x*y) = \Phi(y) = X \times \{y\}
\]
and
\[
\Phi(x) \circ \Phi(y) = (X \times \{x\}) \circ (X \times \{y\}) = X \times \{y\}
\]
hold for all \(x\), \(y \in \mathcal{H}\). Hence, \(\Phi\) is a homomorphism. It is clear that \(\Phi\) is injective and \(\Phi(\mathcal{H}) = P\). Since \(\mathcal{H}\) is a set of generators of \((\mathcal{H}, *)\) and \(\mathcal{H}\) has no zero element, the mapping \(\Phi\) is \(d\)-transitive monomorphism.

For the case when \((\mathcal{H}, *)\) is a left zero semigroup it suffices to consider the partition \(\{\{x\} \times X \colon x \in X\}\) instead of the partition \(P\) defined by~\eqref{p3.11:e1}.
\end{proof}

\begin{example}\label{ex3.12}
Let \(X = \{x \in \mathbb{R} \colon 0 \leqslant x \leqslant 1\}\) and let \(P = \{X_0, X_1, X_2\}\) be a trichotomy of \(X\) defined in Example~\ref{ch2:ex11}. Write
\[
P^{r} := \{X \times X_0, X \times X_1, X \times X_2\}
\]
and
\[
P^{l} := \{X_0 \times X, X_1 \times X, X_2 \times X\}.
\]
Then \(P^{r}\) and \(P^{l}\) are partitions of \(X^{2}\) (see Figure~\ref{fig2}), and \(S_{P^{r}}\) is a right zero semigroup, and \(S_{P^{l}}\) is a left zero semigroup.
\end{example}

\begin{figure}[htb]
\begin{tikzpicture}
\def\xx{0cm};
\def\yy{0cm};
\def\dx{1cm};
\def\dy{1cm};
\draw (\xx - 0.5*\dx, \yy + 2*\dy) node {$P^{r}$};
\draw (\xx-0.1*\dx, \yy) -- (\xx, \yy) node [below left] {\(0\)} -- (\xx + \dx, \yy) node [below] {\(\frac{1}{3}\)} -- (\xx + 2*\dx, \yy) node [below] {\(\frac{2}{3}\)} -- (\xx+3*\dx, \yy) node [below] {\(1\)} -- (\xx+3.1*\dx, \yy);
\draw (\xx,\yy-0.1*\dy) -- (\xx,\yy+\dy) -- (\xx,\yy+ 2*\dy) -- (\xx, \yy+3*\dy) node [left] {\(1\)} -- (\xx, \yy+3.5*\dy);

\filldraw [fill=red, draw=black] (\xx, \yy) -- (\xx+\dx, \yy) -- (\xx+\dx, \yy+3*\dy) -- (\xx, \yy+3*\dy) -- cycle;
\filldraw [fill=yellow, draw=black] (\xx+\dx, \yy) -- (\xx+2*\dx, \yy) -- (\xx+2*\dx, \yy+3*\dy) -- (\xx+\dx, \yy+3*\dy) -- cycle;
\filldraw [fill=blue, draw=black] (\xx+2*\dx, \yy) -- (\xx+3*\dx, \yy) -- (\xx+3*\dx, \yy+3*\dy) -- (\xx+2*\dx, \yy+3*\dy)-- cycle;

\def\xx{5cm};
\draw (\xx + 3.5*\dx, \yy + 2*\dy) node {$P^{l}$};
\draw (\xx-0.1*\dx, \yy) -- (\xx, \yy) node [below left] {\(0\)} -- (\xx + \dx, \yy) -- (\xx + 2*\dx, \yy) -- (\xx+3*\dx, \yy) node [below] {\(1\)} -- (\xx+3.1*\dx, \yy);
\draw (\xx,\yy-0.1*\dy) -- (\xx,\yy+\dy) node [left] {\(\frac{1}{3}\)} -- (\xx,\yy+ 2*\dy) node [left] {\(\frac{2}{3}\)} -- (\xx, \yy+3*\dy) node [left] {\(1\)} -- (\xx, \yy+3.5*\dy);

\filldraw [fill=red, draw=black] (\xx, \yy) -- (\xx+3*\dx, \yy) -- (\xx+3*\dx, \yy+\dy) -- (\xx, \yy+\dy) -- cycle;
\filldraw [fill=yellow, draw=black] (\xx, \yy+\dy) -- (\xx+3*\dx, \yy+\dy) -- (\xx+3*\dx, \yy+2*\dy) -- (\xx, \yy+2*\dy) -- cycle;
\filldraw [fill=blue, draw=black] (\xx, \yy+2*\dy) -- (\xx+3*\dx, \yy+2*\dy) -- (\xx+3*\dx, \yy+3*\dy) -- (\xx, \yy+3*\dy)-- cycle;
\end{tikzpicture}
\caption{The partitions \(P^{r}\) and \(P^{l}\) corresponding to the trichotomy \(\{X_0, X_1, X_2\}\). Here \(X \times X_0\) (\(X_0 \times X\)) is red, \(X \times X_1\) (\(X_1 \times X\)) is yellow and \(X \times X_2\) (\(X_2 \times X\)) is blue.}
\label{fig2}
\end{figure}
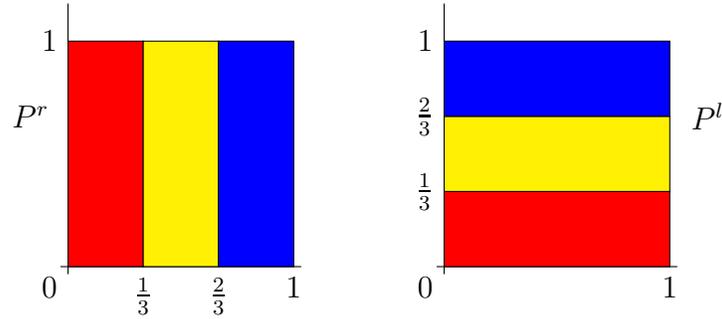

To describe the algebraic structure of the semigroup \(\mc{S}_{P\otimes P^1}\) (see~\eqref{e1.6}) we recall the procedure of ``the adjunction of an identity element''.

Let \((\mc{S}, *)\) be an arbitrary semigroup and let \(\{e\}\) be a single-point set such that \(e \notin \mc{S}\). We can extend the multiplication \(*\) from \(\mc{S}\) to \(\mc{S} \cup \{e\}\) by the rule:
\begin{equation}\label{e2.16}
e*e = e \quad \text{and} \quad e * x = x *e = x
\end{equation}
for every \(x \in \mc{S}\). Following~\cite{Clifford1961} we use the notation
\begin{equation}\label{e2.17}
\mc{S}^{1} := \begin{cases}
\mc{S}, & \text{if \((\mc{S}, *)\) has an identity element}\\
\mc{S} \cup \{e\}, & \text{otherwise}.
\end{cases}
\end{equation}
It is clear that \(e\) is an identity element of \((\mc{S}^{1}, *)\). Thus the semigroup \((\mc{S}^{1}, *)\) is obtained from \((\mc{S}, *)\) by ``adjunction of an identity element to \((\mc{S}, *)\)''.

Now we want to prove that for every nonempty set \(X\) and every partition \(P\) of \(X\), the semigroup \((\mc{S}_{P\otimes P^1}, \circ)\) can be obtained from \((\mc{S}_{P\otimes P}, \circ)\) by adjunction of an identity element.

\begin{lemma}\label{l2.10}
Let \((\mc{H}, *)\) belong to \(\mb{H}_1\). Then \((\mc{H}, *)\) contains an identity element if and only if \(|\mc{H}| = 1\).
\end{lemma}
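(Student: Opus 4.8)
The plan is to prove the two implications of the equivalence separately. The implication that \(|\mc{H}| = 1\) forces the existence of an identity element is immediate from the convention adopted at the beginning of this section: the unique element of a one-point semigroup is, by agreement, its identity element, so nothing has to be checked in this direction.

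For the converse I would argue by contraposition, showing that if \(|\mc{H}| \geqslant 2\) then \(\mc{H}\) cannot contain an identity element. Assume, to derive a contradiction, that \(e \in \mc{H}\) is an identity. By condition \((ii_1)\) of Theorem~\ref{ch2:th6}, the inequality \(|\mc{H}| \geqslant 2\) forces \(\mc{H}\) to possess a zero element \(\theta\), and \(e \neq \theta\), since \(e = \theta\) would give \(s = e * s = \theta * s = \theta\) for every \(s \in \mc{H}\), i.e.\ \(|\mc{H}| = 1\). Thus \(e\) and \(\theta\) are two distinct trivial idempotents of \(\mc{H}\).

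The key step is to produce a \emph{nontrivial} idempotent and play it against \(e\). Since \(|\mc{H}| \geqslant 2\), there is a nonzero element \(a \in \mc{H}\) (any element different from \(\theta\)), and condition \((ii_4)\) then supplies a pair \((i_{la}, i_{ra})\) of nontrivial idempotent elements with \(a = i_{la} * a * i_{ra}\). Being nontrivial, \(i_{la}\) differs from both \(e\) and \(\theta\); hence \(e\) and \(i_{la}\) are distinct idempotents, and condition \((ii_2)\) yields \(e * i_{la} = \theta\). On the other hand, \(e\) is an identity, so \(e * i_{la} = i_{la}\). Therefore \(i_{la} = \theta\), contradicting the fact that \(i_{la}\) is a nontrivial idempotent. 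This contradiction shows that no identity element exists when \(|\mc{H}| \geqslant 2\), which completes the proof.

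I do not expect a genuine obstacle here; the argument is short once the conditions \((ii_1)\), \((ii_2)\) and \((ii_4)\) are assembled in the right order. The only point that deserves a moment's care is the existence of a nontrivial idempotent, which is exactly what \((ii_4)\) delivers as soon as a nonzero element is at hand. An alternative, slightly less economical, route would be to invoke Theorem~\ref{ch2:th6} to realize \(\mc{H}\) as \(\mc{S}_{P \otimes P}\) with \(|P| \geqslant 2\), use Lemma~\ref{l3.3} to write its elements as \(\varnothing\) together with the Cartesian products \(X_{j_1} \times X_{j_2}\), and observe from formula~\eqref{ch2:th6:e4} that such a product can be a two-sided identity only if \(X_{j_2}\) equals every block of \(P\), which is impossible for \(|P| \geqslant 2\); the abstract argument above is shorter.
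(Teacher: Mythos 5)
Your proof is correct, and its skeleton matches the paper's: both handle \(|\mc{H}|=1\) by the stated convention and, for \(|\mc{H}|\geqslant 2\), use \((ii_1)\) to obtain the zero \(\theta\), observe \(e\neq\theta\), and then use \((ii_4)\) to produce a nontrivial idempotent whose existence clashes with the presence of an identity. The closing step, however, is genuinely different. The paper applies \((ii_4)\) to the identity \(e\) itself, obtaining nontrivial idempotents \(i_{le}, i_{re}\) with \(e=i_{le}*e*i_{re}\), and then computes
\[
i_{le}=i_{le}*e=i_{le}*(i_{le}*e*i_{re})=(i_{le}*i_{le})*e*i_{re}=i_{le}*e*i_{re}=e,
\]
contradicting the nontriviality of \(i_{le}\); condition \((ii_2)\) is never invoked. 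You instead apply \((ii_4)\) to an arbitrary nonzero element and finish with \((ii_2)\): the identity \(e\) and the nontrivial idempotent \(i_{la}\) are distinct idempotents, so \(e*i_{la}=\theta\), while \(e*i_{la}=i_{la}\), forcing \(i_{la}=\theta\). This is legitimate, since \((ii_2)\) as stated quantifies over \emph{all} distinct idempotents, trivial or not, and that literal reading is indeed satisfied in \(\mc{S}_{P\otimes P}\). The trade-off is that the paper's argument is more robust (it would survive a weakening of \((ii_2)\) to nontrivial idempotents only), whereas yours avoids the associativity/idempotency computation entirely; both are complete.
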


\begin{proof}
If \(|\mc{H}| = 1\) holds, then \(\mc{H}\) contains the identity element. Suppose now \(|\mc{H}| \neq 1\). By condition~\((ii_1)\) of Theorem~\ref{ch2:th6}, \((\mc{H}, *)\) contains a zero element \(\theta\). If \(e\) is an identity element of \(\mc{H}\), then \(e \neq \theta\) holds. By condition~\((ii_4)\) of Theorem~\ref{ch2:th6}, there exists a unique pair of idempotent elements \(i_{le}\), \(i_{re} \notin \{\theta, e\}\) such that
\[
e = i_{le} * e * i_{re}
\]
holds. Now we obtain
\[
i_{le} = i_{le} * e = i_{le} * (i_{le} * e * i_{re}) = (i_{le} * i_{le}) * e * i_{re} = i_{le} * e * i_{re} = e.
\]
Thus \(i_{le} = e\) holds contrary to \(i_{le} \notin \{\theta, e\}\).
\end{proof}

\begin{theorem}\label{th2.11}
Let \((L, \cdot)\) be a semigroup. The following statements are equivalent.
\begin{enumerate}
\item\label{th2.11:s1} There are a set \(X\) and a partition \(P\) of \(X\) such that the semigroup \((\mc{S}_{P\otimes P^1}, \circ)\) is isomorphic to \((L, \cdot)\).
\item\label{th2.11:s2} There is a semigroup \((\mc{H}, *) \in \mb{H}_1\) such that \((L, \cdot)\) and \((\mc{H}^1, *)\) are isomorphic.
\end{enumerate}
\end{theorem}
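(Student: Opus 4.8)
The plan is to prove that, for every nonempty set $X$ and every partition $P$ of $X$, the semigroup $(\mc{S}_{P\otimes P^1}, \circ)$ is isomorphic to $((\mc{S}_{P\otimes P})^1, \circ)$; since $\mc{S}_{P\otimes P} \in \mb{H}_1$ by Theorem~\ref{ch2:th6}, the equivalence $\ref{th2.11:s1} \Leftrightarrow \ref{th2.11:s2}$ then follows immediately. Indeed, if $\ref{th2.11:s1}$ holds we may take $\mc{H} := \mc{S}_{P\otimes P}$, and conversely, given $\mc{H} \in \mb{H}_1$, Theorem~\ref{ch2:th6} supplies $X$ and $P \in \mathbf{\Pi}(X)$ with $\mc{H} \cong \mc{S}_{P\otimes P}$, so that $\mc{H}^1 \cong (\mc{S}_{P\otimes P})^1 \cong \mc{S}_{P\otimes P^1}$ and hence $(L, \cdot) \cong \mc{S}_{P\otimes P^1}$, which is $\ref{th2.11:s1}$.

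The case $|P| = 1$ is trivial: then $\nabla_J = \varnothing$, so by~\eqref{e1.6} we have $P \otimes P^1 = \{R_P\}$ with $R_P = X^2$, and both $\mc{S}_{P\otimes P^1}$ and $\mc{S}_{P\otimes P}$ coincide with the one-element semigroup $\{X^2\}$, which already has an identity element, so $(\mc{S}_{P\otimes P})^1 = \mc{S}_{P\otimes P}$.

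Now suppose $|P| \geqslant 2$. The first step is to show that $\mc{S}_{P\otimes P^1} = \mc{S}_{P\otimes P} \cup \{R_P\}$. For the inclusion $\supseteq$ note that, by~\eqref{e1.6}, both $R_P$ and all off-diagonal blocks $X_{j_1}\times X_{j_2}$ with $j_1 \neq j_2$ are generators of $\mc{S}_{P\otimes P^1}$; moreover, using $|P|\geqslant 2$ and~\eqref{ch2:th6:e4}, every diagonal block $X_j\times X_j$ equals $(X_j\times X_{j'})\circ(X_{j'}\times X_j)$ for some $j'\neq j$, and $\varnothing = (X_{j_1}\times X_{j_2})\circ(X_{j_3}\times X_{j_4})$ whenever $X_{j_2}\neq X_{j_3}$, so $\mc{S}_{P\otimes P} = \{\varnothing\}\cup P\otimes P$ (Lemma~\ref{l3.3}) is contained in $\mc{S}_{P\otimes P^1}$. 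For the inclusion $\subseteq$ it suffices to check that $\mc{S}_{P\otimes P}\cup\{R_P\}$ is closed under $\circ$; writing $R_P = \bigcup_{j\in J} X_j^2$ (Proposition~\ref{p2}) and applying~\eqref{ch2:th6:e4} term by term yields $R_P\circ R_P = R_P$ and $R_P\circ s = s\circ R_P = s$ for every $s\in\mc{S}_{P\otimes P}$ (trivially if $s=\varnothing$, and by a one-line computation if $s = X_{j_1}\times X_{j_2}$), which together with the fact that $\mc{S}_{P\otimes P}$ is a semigroup gives the required closure.

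The identities recorded above also show that $R_P$ is an identity element of $\mc{S}_{P\otimes P^1}$. Since $|P|\geqslant 2$, the relation $R_P$ is a nonempty union of at least two distinct blocks of $P\otimes P$, hence is not a single block, and therefore $R_P\notin\{\varnothing\}\cup P\otimes P = \mc{S}_{P\otimes P}$; also $\mc{S}_{P\otimes P}$ has at least two elements, so it has no identity element by Lemma~\ref{l2.10} (recall $\mc{S}_{P\otimes P}\in\mb{H}_1$). Consequently the map $(\mc{S}_{P\otimes P})^1 \to \mc{S}_{P\otimes P^1}$ that is the identity on $\mc{S}_{P\otimes P}$ and sends the adjoined identity element to $R_P$ is a well-defined bijection, and it is a homomorphism by the identities above; this is the required isomorphism, and the proof is complete. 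The only mildly delicate point — routine rather than an obstacle — is the verification, via~\eqref{ch2:th6:e4}, that $R_P$ acts as a two-sided identity on $\mc{S}_{P\otimes P}$ and that $R_P$ does not already lie in $\mc{S}_{P\otimes P}$.
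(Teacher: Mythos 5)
Your proposal is correct and follows essentially the same route as the paper: the paper also reduces Theorem~\ref{th2.11} to the statement that \((\mc{S}_{P\otimes P^1}, \circ)\) is obtained from \((\mc{S}_{P\otimes P}, \circ)\) by adjoining an identity element (its Theorem~\ref{th2.12}), and proves it by the same computations \(R_P \circ s = s \circ R_P = s\), \(R_P \circ R_P = R_P\), \(R_P \notin \mc{S}_{P\otimes P}\), together with the observation that the off-diagonal blocks already generate \(\mc{S}_{P\otimes P}\). Your write-up fills in the same details (including the appeal to Lemma~\ref{l2.10} and the trivial case \(|P|=1\)), so there is nothing to correct.
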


Tacking into account Theorem~\ref{ch2:th6}, we obtain the following equivalent reformulation of Theorem~\ref{th2.11}.

\begin{theorem}\label{th2.12}
Let \(X\) be a set and let \(P = \{X_j \colon j \in J\}\) be a partition of \(X\). Then the semigroups \((\mc{S}_{P\otimes P}^1, \circ)\) and \((\mc{S}_{P\otimes P^1}, \circ)\) are isomorphic, where \((\mc{S}_{P\otimes P}^1, \circ)\) is obtained from \((\mc{S}_{P\otimes P}, \circ)\) by adjunction of an identity element.
\end{theorem}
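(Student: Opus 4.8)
The plan is to identify the underlying set of $\mc{S}_{P\otimes P^{1}}$ explicitly, to recognize $R_P$ as an identity element that sits outside $\mc{S}_{P\otimes P}$, and then to build the isomorphism by hand. Write $P = \{X_j : j\in J\}$ and $R_P = \bigcup_{j\in J} X_j^{2}$. I would first dispose of the degenerate case $|J| = 1$: then $P\otimes P = P\otimes P^{1} = \{X^{2}\}$, and $\mc{S}_{P\otimes P} = \{X^{2}\}$ already possesses an identity (namely $X^{2}$ itself), so $\mc{S}_{P\otimes P}^{1} = \mc{S}_{P\otimes P}$ and all semigroups in sight are one-element; the assertion is trivial. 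From now on assume $|J|\geqslant 2$.

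The key computational step is the multiplication table. Formula~\eqref{ch2:th6:e4} says $(X_{j_1}\times X_{j_2})\circ(X_{j_3}\times X_{j_4})$ equals $\varnothing$ if $j_2\neq j_3$ and equals $X_{j_1}\times X_{j_4}$ if $j_2 = j_3$. Since composition of relations distributes over arbitrary unions, this immediately yields $R_P\circ R_P = R_P$ and $R_P\circ(X_{j_1}\times X_{j_2}) = (X_{j_1}\times X_{j_2})\circ R_P = X_{j_1}\times X_{j_2}$ for every $\langle j_1, j_2\rangle\in J^{2}$, together with $R_P\circ\varnothing = \varnothing\circ R_P = \varnothing$. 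From this I would conclude that $\{\varnothing\}\cup(P\otimes P)\cup\{R_P\}$ is closed under $\circ$ and contains $P\otimes P^{1} = \{R_P\}\cup\{X_{j_1}\times X_{j_2} : \langle j_1, j_2\rangle\in\nabla_J\}$, hence contains $\mc{S}_{P\otimes P^{1}}$; and conversely that every element of $\{\varnothing\}\cup(P\otimes P)\cup\{R_P\}$ lies in $\mc{S}_{P\otimes P^{1}}$, since $R_P$ and every off-diagonal block are generators, each diagonal block satisfies $X_j\times X_j = (X_j\times X_{j'})\circ(X_{j'}\times X_j)$ for a chosen $j'\neq j$, and $\varnothing = (X_j\times X_{j'})\circ(X_j\times X_{j'})$. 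So $\mc{S}_{P\otimes P^{1}} = \{\varnothing\}\cup(P\otimes P)\cup\{R_P\}$.

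Next I would separate $R_P$ from the rest. Lemma~\ref{l3.3} (applicable because $|P|\geqslant 2$) gives $\mc{S}_{P\otimes P} = \{\varnothing\}\cup(P\otimes P)$, and this subset is closed under $\circ$, so it is a subsemigroup of $\mc{S}_{P\otimes P^{1}}$; moreover $R_P\notin\mc{S}_{P\otimes P}$, since $R_P\neq\varnothing$ and an equality $R_P = X_{j_1}\times X_{j_2}$ would force $X = X_{j_1} = X_{j_2}$ (because $\Delta_X\subseteq R_P$), contradicting $|J|\geqslant 2$. Thus $\mc{S}_{P\otimes P^{1}} = \mc{S}_{P\otimes P}\cup\{R_P\}$ is a disjoint union, and the table above shows $R_P$ is an identity element of $\mc{S}_{P\otimes P^{1}}$. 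On the other side, by Theorem~\ref{ch2:th6} we have $\mc{S}_{P\otimes P}\in\mb{H}_1$, and since $|\mc{S}_{P\otimes P}|\geqslant 2$ Lemma~\ref{l2.10} shows $\mc{S}_{P\otimes P}$ has no identity; hence $\mc{S}_{P\otimes P}^{1} = \mc{S}_{P\otimes P}\cup\{e\}$ with $e\notin\mc{S}_{P\otimes P}$, the element $e$ turned into an identity by~\eqref{e2.16}.

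To finish, I would define $F\colon\mc{S}_{P\otimes P}^{1}\to\mc{S}_{P\otimes P^{1}}$ by $F(e) = R_P$ and $F(s) = s$ for $s\in\mc{S}_{P\otimes P}$; it is a bijection by the disjoint-union descriptions. Checking that $F$ is a homomorphism is a short case analysis: on pairs from $\mc{S}_{P\otimes P}$ it is the inclusion of a subsemigroup, and any product in which $e$ appears is settled using that $e$ is the identity of the domain and $R_P$ the identity of the codomain. I do not expect a genuine obstacle; the only points needing care are the exact determination $\mc{S}_{P\otimes P^{1}} = \{\varnothing\}\cup(P\otimes P)\cup\{R_P\}$ and the reminder that Lemma~\ref{l3.3} fails when $|P| = 1$ (see Remark~\ref{r3.4}), which is exactly why the one-block case is peeled off at the start.
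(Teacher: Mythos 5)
Your proposal is correct and follows essentially the same route as the paper: peel off the one-block case, compute via~\eqref{ch2:th6:e4} that \(R_P\) acts as a two-sided identity with \(R_P \circ R_P = R_P\), identify \(\mc{S}_{P\otimes P^{1}} = \{\varnothing\} \cup (P\otimes P) \cup \{R_P\}\) as the disjoint union of \(\mc{S}_{P\otimes P}\) (via Lemma~\ref{l3.3}) and \(\{R_P\}\), and invoke Lemma~\ref{l2.10} to see that \(\mc{S}_{P\otimes P}\) has no identity of its own. Your write-up is somewhat more explicit than the paper's (in particular in justifying \(R_P \notin \mc{S}_{P\otimes P}\) and in the final bijection check), but the underlying argument is identical.
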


\begin{proof}
The case \(|J| = 1\) is trivial.

Let \(|J| \geqslant 2\) hold. In this case, by Lemma~\ref{l2.10}, \((\mc{S}_{P\otimes P}, \circ)\) does not contain any identity element. It is easy to see that
\[
P\otimes P^{-} := \{X_{j_1} \times X_{j_2} \colon \<j_1, j_2> \in \nabla_{J}\},
\]
where \(\nabla_{J} = J^{2} \setminus \Delta_{J}\), is a set of generators of the semigroup \((\mc{S}_{P\otimes P}, \circ)\). Indeed, the equality
\[
X_{j_1}^{2} = (X_{j_1} \times X_{j_2}) \circ (X_{j_2} \times X_{j_1})
\]
holds for all \(j_1\), \(j_2 \in J\). Hence, we have \(P\otimes P \subseteq \mc{S}_{P\otimes P^{-}}\). Since \(P\otimes P\) is a set of generators of \((\mc{S}_{P\otimes P}, \circ)\) and
\[
P\otimes P^{-} \subseteq P\otimes P
\]
holds, the equality \((\mc{S}_{P\otimes P^{-}}, \circ) = (\mc{S}_{P\otimes P}, \circ)\) follows.

For every \((X_{j_1} \times X_{j_2}) \in P\otimes P\) we evidently have
\begin{align*}
(X_{j_1} \times X_{j_2}) \circ R_P &= (X_{j_1} \times X_{j_2}) \circ \left(\bigcup_{j \in J} X_j^{2} \right) \\
&= \bigcup_{j \in J} (X_{j_1} \times X_{j_2}) \circ X_j^{2} = X_{j_1} \times X_{j_2}
\end{align*}
and, similarly,
\[
R_P \circ (X_{j_1} \times X_{j_2}) = X_{j_1} \times X_{j_2}
\]
holds. Moreover, we have
\begin{align*}
R_P \circ R_P &= \left(\bigcup_{j \in J} X_j^{2}\right) \circ \left(\bigcup_{j \in J} X_j^{2}\right) \\
&= \bigcup_{i, j \in J} X_i^{2} \circ X_j^{2} = \bigcup_{j \in J} X_j^{2} = R_P
\end{align*}
and \(R_P \notin \mc{S}_{P\otimes P}\). Since \(\{R_P\} = \mc{S}_{P\otimes P^1} \setminus \mc{S}_{P\otimes P}\) and \(R_P\) is the identity element of \((\mc{S}_{P\otimes P^1}, \circ)\), the semigroup \((\mc{S}_{P\otimes P^1}, \circ)\) is obtained by adjunction of the identity element \(R_P\) to \((\mc{S}_{P\otimes P}, \circ)\).
\end{proof}

\begin{corollary}\label{c2.14}
Let \(X\) be a nonempty set and let \(P\) be a partition of \(X\) with \(|P| \geqslant 2\). Then we have
\[
\mc{S}_{P \otimes P^{1}} = \{\varnothing\} \cup \{R_P\} \cup P \otimes P.
\]
\end{corollary}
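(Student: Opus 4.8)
The plan is to read this off directly from Theorem~\ref{th2.12} together with Lemma~\ref{l3.3}. By Lemma~\ref{l3.3}, the hypothesis \(|P| \geqslant 2\) gives \(\mc{S}_{P \otimes P} = \{\varnothing\} \cup P \otimes P\), so it suffices to show that \(\mc{S}_{P \otimes P^{1}} = \mc{S}_{P \otimes P} \cup \{R_P\}\). This last identity is precisely what the proof of Theorem~\ref{th2.12} establishes in the case \(|J| \geqslant 2\): there it is checked that \(R_P \notin \mc{S}_{P \otimes P}\), that \(R_P\) is the identity element of \((\mc{S}_{P \otimes P^{1}}, \circ)\), and that \(\{R_P\} = \mc{S}_{P \otimes P^{1}} \setminus \mc{S}_{P \otimes P}\). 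Substituting the expression for \(\mc{S}_{P \otimes P}\) into \(\mc{S}_{P \otimes P^{1}} = \mc{S}_{P \otimes P} \cup \{R_P\}\) then yields the claim.

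If one prefers a self-contained argument, one can verify the two inclusions by hand. For ``\(\supseteq\)'': \(R_P\) is a generator of \(\mc{S}_{P \otimes P^{1}}\); choosing distinct \(X_{j}, X_{j'} \in P\) (possible as \(|P| \geqslant 2\)), formula~\eqref{ch2:th6:e4} gives \((X_{j} \times X_{j'}) \circ (X_{j} \times X_{j'}) = \varnothing\), so \(\varnothing \in \mc{S}_{P \otimes P^{1}}\); and any \(X_{j_1} \times X_{j_2} \in P \otimes P\) is either already a generator (if \(j_1 \neq j_2\)) or equals \((X_{j_1} \times X_{j'}) \circ (X_{j'} \times X_{j_1})\) for any \(j' \neq j_1\) (if \(j_1 = j_2\)), again using \(|P| \geqslant 2\). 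For ``\(\subseteq\)'': an element of \(\mc{S}_{P \otimes P^{1}}\) is a finite composition of the generators \(R_P\) and \(X_{j_1} \times X_{j_2}\) with \(j_1 \neq j_2\); since \(R_P\) is a two-sided identity (by the computations \(R_P \circ R_P = R_P\) and \(R_P \circ (X_{j_1} \times X_{j_2}) = X_{j_1} \times X_{j_2} = (X_{j_1} \times X_{j_2}) \circ R_P\) from the proof of Theorem~\ref{th2.12}), one may delete every occurrence of \(R_P\): a composition consisting only of \(R_P\)'s equals \(R_P\), and in every other case one is left with a nonempty composition of blocks of \(P \otimes P\), which by~\eqref{ch2:th6:e4} is either \(\varnothing\) or again a single block of \(P \otimes P\). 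Hence every element lies in \(\{\varnothing\} \cup \{R_P\} \cup P \otimes P\).

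The argument is essentially bookkeeping, so there is no genuine obstacle. The only point requiring a little care is that Theorem~\ref{th2.12} is stated as an isomorphism rather than a literal set identity, so one should quote (or re-run) the explicit description of \(\mc{S}_{P \otimes P^{1}}\) obtained inside its proof. One should also note that the hypothesis \(|P| \geqslant 2\) is used twice: once, via Lemma~\ref{l3.3}, to ensure \(\varnothing \in \mc{S}_{P \otimes P}\), and once to express the ``diagonal'' blocks \(X_j^{2}\) through the off-diagonal generators of \(P \otimes P^{1}\); for \(|P| = 1\) the corollary fails, compare Remark~\ref{r3.4}.
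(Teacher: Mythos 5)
Your proposal is correct and follows essentially the same route as the paper, which leaves this corollary without a separate proof precisely because it is the combination of Lemma~\ref{l3.3} with the identity \(\{R_P\} = \mc{S}_{P\otimes P^{1}} \setminus \mc{S}_{P\otimes P}\) established inside the proof of Theorem~\ref{th2.12}. Your optional self-contained verification via formula~\eqref{ch2:th6:e4} is also sound, and your closing observation that \(|P| \geqslant 2\) is genuinely needed (and that the statement fails for \(|P|=1\), cf.\ Remark~\ref{r3.4}) is accurate.
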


The proof of the next corollary is similar to the proof of Corollary~\ref{c3.6}.

\begin{corollary}\label{c3.17}
Let \((\mc{H}, *)\) belong to \(\mathbf{H}_{1}\). Then \((\mc{H}^{1}, *)\) admits a \(d\)-transitive monomorphism \(\mc{H}^{1} \to \mc{B}_{X}\) for a suitable set \(X\).
\end{corollary}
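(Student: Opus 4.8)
The plan is to mirror the proof of Corollary~\ref{c3.6}, replacing the appeal to Theorem~\ref{ch2:th6} by one to Theorem~\ref{th2.11} (equivalently, by Theorem~\ref{th2.12}).

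First, given \((\mc{H}, *) \in \mathbf{H}_{1}\), I would apply the implication \(\ref{th2.11:s2}\Rightarrow\ref{th2.11:s1}\) of Theorem~\ref{th2.11} with \((L,\cdot) := (\mc{H}^{1}, *)\): since \((\mc{H},*) \in \mathbf{H}_{1}\) and \((\mc{H}^{1},*)\) is trivially isomorphic to itself, condition~\ref{th2.11:s2} is fulfilled, so there are a nonempty set \(X\) and a partition \(P\) of \(X\) for which \((\mc{S}_{P\otimes P^{1}}, \circ)\) is isomorphic to \((\mc{H}^{1}, *)\). Fix such \(X\), \(P\) and an isomorphism \(\Theta \colon \mc{H}^{1} \to \mc{S}_{P\otimes P^{1}}\).

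Next I would check that the inclusion \(\Id \colon \mc{S}_{P\otimes P^{1}} \to \mc{B}_{X}\) is a \(d\)-transitive monomorphism, exactly as in the proof of Corollary~\ref{c3.6}. It is plainly an injective homomorphism. By definition \(\mc{S}_{P\otimes P^{1}}\) has the set \(A := P\otimes P^{1}\) (see~\eqref{e1.6}) as a set of generators, and \(A\) is a partition of \(X^{2}\), so \(\{\Id(a)\colon a\in A\}=A\) is a partition of \(X^{2}\). Moreover, if \((\mc{S}_{P\otimes P^{1}},\circ)\) has a zero element, it is the empty relation: this follows from Corollary~\ref{c2.14} when \(|P|\geqslant 2\), while for \(|P|=1\) the semigroup \(\mc{S}_{P\otimes P^{1}}\) is a one-element semigroup and has no zero. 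Hence \(\Id(\varnothing)=\varnothing\) whenever a zero is present, and Definition~\ref{d1.1} is satisfied.

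Finally, the composition \(\Phi := \Id\circ\Theta \colon \mc{H}^{1}\to\mc{B}_{X}\) is a monomorphism. Putting \(A' := \Theta^{-1}(P\otimes P^{1})\), which generates \(\mc{H}^{1}\) because \(\Theta\) is an isomorphism and \(P\otimes P^{1}\) generates \(\mc{S}_{P\otimes P^{1}}\), we get that \(\{\Phi(a')\colon a'\in A'\}=P\otimes P^{1}\) is a partition of \(X^{2}\), and \(\Phi\) sends the zero of \(\mc{H}^{1}\), if any, to \(\varnothing\). Thus \(\Phi\) is the required \(d\)-transitive monomorphism. There is no real obstacle here; the only points to keep straight are that the generating partition to exhibit for \(\mc{S}_{P\otimes P^{1}}\) is its defining generating set \(P\otimes P^{1}\), and that the zero of \(\mc{S}_{P\otimes P^{1}}\) (hence of \(\mc{H}^{1}\)) is \(\varnothing\). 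If one prefers to avoid Theorem~\ref{th2.11}, the same conclusion follows by using Theorem~\ref{ch2:th6} to obtain \((\mc{H},*)\cong(\mc{S}_{P\otimes P},\circ)\), extending this isomorphism over the adjoined identity elements --- legitimate because, by Lemma~\ref{l2.10}, neither semigroup possesses an identity when \(|\mc{H}|\geqslant 2\) --- and then invoking Theorem~\ref{th2.12}.
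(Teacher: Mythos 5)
Your argument is correct and is precisely the proof the paper intends when it says the corollary is proved ``similarly to Corollary~\ref{c3.6}'': apply Theorem~\ref{th2.11} (via Theorems~\ref{ch2:th6} and~\ref{th2.12}) to identify \((\mc{H}^{1},*)\) with some \((\mc{S}_{P\otimes P^{1}},\circ)\), and then observe, using Corollary~\ref{c2.14} and Definition~\ref{d1.1}, that the inclusion \(\Id\colon \mc{S}_{P\otimes P^{1}}\to\mc{B}_{X}\) is a \(d\)-transitive monomorphism with generating partition \(P\otimes P^{1}\) and zero \(\varnothing\). Your handling of the degenerate case \(|P|=1\) and of the zero element matches what the paper's template requires, so nothing is missing.
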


\section{Semigroups generated by finest symmetric partitions of Cartesian squares}

In what follows we say that a subsemigroup \(\mc{H}_1\) of a semigroup \((\mc{H}, *)\) is an \emph{ideal} of \(\mc{H}\) if
\[
\mc{H}_1 * \mc{H} \subseteq \mc{H}_1 \quad \text{and} \quad \mc{H} * \mc{H}_1 \subseteq \mc{H}_1
\]
holds, where we write
\begin{equation}\label{e2.19}
A*B := \{x*y \colon x \in A,\ y \in B\}
\end{equation}
for all nonempty subsets \(A\) and \(B\) of \(\mc{H}\). An ideal \(\mc{H}_1\) of a semigroup \(\mc{H}\) is \emph{proper} if \(|\mc{H}_1| > 1\) and \(\mc{H} \neq \mc{H}_1\) hold.

\begin{lemma}\label{l2.18}
Let \(\mc{C}\) be an ideal of a semigroup \((\mc{H}, *)\) and let \(\theta\) be the zero of \(\mc{C}\). Then \(\theta\) is also the zero of \(\mc{H}\).
\end{lemma}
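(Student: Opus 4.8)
Let $\mc{C}$ be an ideal of a semigroup $(\mc{H}, *)$ and let $\theta$ be the zero of $\mc{C}$. Then $\theta$ is also the zero of $\mc{H}$.

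The plan is to verify directly that $\theta * h = \theta = h * \theta$ for every $h \in \mc{H}$, exploiting only two facts: that $\theta$ is absorbing inside $\mc{C}$, and that $\mc{C}$ is closed under multiplication on both sides by arbitrary elements of $\mc{H}$. First I would fix an arbitrary $h \in \mc{H}$. Since $\theta \in \mc{C}$ and $\mc{C}$ is an ideal, the product $\theta * h$ lies in $\mc{C}$; call it $c := \theta * h \in \mc{C}$. Now I use that $\theta$ is the zero of the subsemigroup $\mc{C}$: applying it to $c$ on the left gives $\theta * c = \theta$. On the other hand, associativity rewrites $\theta * c = \theta * (\theta * h) = (\theta * \theta) * h = \theta * h = c$, where $\theta * \theta = \theta$ because $\theta$ is idempotent (being the zero of $\mc{C}$). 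Hence $c = \theta$, i.e. $\theta * h = \theta$.

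The argument for $h * \theta = \theta$ is the mirror image: set $c' := h * \theta \in \mc{C}$ (again using that $\mc{C}$ is an ideal), use that $\theta$ is the zero of $\mc{C}$ to get $c' * \theta = \theta$, and rewrite $c' * \theta = (h * \theta) * \theta = h * (\theta * \theta) = h * \theta = c'$, whence $c' = \theta$. Combining the two identities shows $\theta$ is a zero of $\mc{H}$; uniqueness of the zero (if one wants to call it \emph{the} zero) is automatic since a semigroup has at most one zero element.

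I do not anticipate any real obstacle here — the only point that needs a moment's care is making sure one is allowed to multiply $\theta$ by an arbitrary element of $\mc{H}$ and land back in $\mc{C}$, which is exactly the defining property $\mc{C} * \mc{H} \subseteq \mc{C}$ and $\mc{H} * \mc{C} \subseteq \mc{C}$ of an ideal; and noting that idempotency of $\theta$ is part of being a zero. Everything else is a two-line associativity manipulation on each side.
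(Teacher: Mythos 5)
Your proof is correct and follows essentially the same route as the paper: both arguments use the ideal property to conclude $\theta * h \in \mc{C}$ and then the associativity computation $\theta = \theta * (\theta * h) = (\theta * \theta) * h = \theta * h$, with the mirror argument for the other side. No issues.
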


\begin{proof}
Let \(a\) belong to \(\mc{H} \setminus \mc{C}\). Then \(\theta * a\) belongs to \(\mc{C}\) because \(\theta \in \mc{C}\) and \(\mc{C}\) is an ideal of \(\mc{H}\). Consequently, we have
\[
\theta = \theta * (\theta * a) = (\theta * \theta) * a = \theta * a.
\]
Similarly we obtain \(a * \theta = \theta\). Thus, \(\theta\) is a zero of \((\mc{H}, *)\).
\end{proof}

A semigroup is a \emph{band} if every element of this semigroup is idempotent. (This notion was introduced in~\cite{KL-B}.) For every \((\mc{H}, *) \in \mathbf{H}_1\) the set \(E = E(\mc{H})\) is a commutative band (this band was consider above in Proposition~\ref{p3.7}). A right (left) zero semigroup is an example of non-commutative band (see Proposition~\ref{p3.11}).

If the set \(E(\mc{H})\) of all idempotent elements of a semigroup \(\mc{H}\) is a band, then the set \(E(\mc{H}^{1})\) is also a band.

\begin{example}\label{ex2.15}
Let \(P = \{X_j \colon j \in J\}\) be a partition of a nonempty set \(X\) and let \(|J| \geqslant 2\). Then the sets \(E(\mathcal{S}_{P \otimes P^{1}})\) and \(E(\mathcal{S}_{P \otimes P})\) are commutative bands,
\[
E(\mathcal{S}_{P \otimes P^{1}}) = \{\varnothing\} \cup \{R_P\} \cup \{X_j^{2} \colon j \in J\}, \quad E(\mathcal{S}_{P \otimes P}) = \{\varnothing\} \cup \{X_j^{2} \colon j \in J\}.
\]
\end{example}

\begin{figure}[htb]
\begin{tikzpicture}[place/.style={circle,minimum size=14pt,inner sep=0pt}, node distance=1.5cm, vec/.style={-{Latex[length=3mm]}}]

\node (A1)  [place,draw=black,fill=black] {};
\node (A22) [place,draw=black,fill=yellow,below=of A1] {};
\node (A21) [place,draw=black,fill=red,left=of A22] {};
\node (A23) [place,draw=black,fill=blue,right=of A22] {};
\node (A3)  [place,draw=black,fill=white,below=of A22] {};
\draw [vec] (A1) -- ($0.3*(A1)+0.7*(A21)$); \draw ($0.3*(A1)+0.7*(A21)$) -- (A21);
\draw [vec] (A1) -- ($0.3*(A1)+0.7*(A22)$); \draw ($0.3*(A1)+0.7*(A22)$) -- (A22);
\draw [vec] (A1) -- ($0.3*(A1)+0.7*(A23)$); \draw ($0.3*(A1)+0.7*(A23)$) -- (A23);

\draw [vec] (A21) -- ($0.3*(A21)+0.7*(A3)$); \draw ($0.3*(A21)+0.7*(A3)$) -- (A3);
\draw [vec] (A22) -- ($0.3*(A22)+0.7*(A3)$); \draw ($0.3*(A22)+0.7*(A3)$) -- (A3);
\draw [vec] (A23) -- ($0.3*(A23)+0.7*(A3)$); \draw ($0.3*(A23)+0.7*(A3)$) -- (A3);
\end{tikzpicture}
\caption{\(R_P\) is white, \(X_0^{2}\) is red, \(X_1^{2}\) is yellow, \(X_2^{2}\) is blue, and \(\varnothing\) is black.}
\label{fig2.2}
\end{figure}

Every commutative band \((E, *)\) has a natural partial order \(\leqslant\) defined by
\begin{equation}\label{e2.21}
(i_2 \leqslant i_1) \Leftrightarrow (i_1 * i_2 = i_1).
\end{equation}
A colored Hasse diagram of \((E, \leqslant)\) is plotted in Figure~\ref{fig2.2} for the case when \(E\) is the band of all idempotents of \(\mc{S}_{P \otimes P^{1}}\) and \(P = \{X_0, X_1, X_2\}\) is the trichotomy introduced in Example~\ref{ch2:ex11}. A standard definition of the Hasse diagram for finite partially ordered sets can be found in~\cite[p.~15]{Howie2003}.

\begin{definition}\label{d2.15}
Let \((\mc{H}, *)\) be a semigroup and let \(\mc{C}\) be an ideal of \((\mc{H}, *)\). The semigroup \((\mc{H}, *)\) is a \emph{band of subsemigroups with core} \(\mc{C}\) if there is a partition \(\{\mc{H}_{\alpha} \colon \alpha \in \Omega\}\) of the set \(\mc{H} \setminus \mc{C}\) such that every \(\mc{H}_{\alpha}\) is a subsemigroup of \(\mc{H}\) and \(\mc{H}_{\alpha_1} * \mc{H}_{\alpha_2} \subseteq \mc{C}\) holds for all distinct \(\alpha_1\), \(\alpha_2 \in \Omega\).
\end{definition}

If \(\mc{H}\) is a band of semigroups with core \(\mc{C}\), then we write
\[
\mc{H} \approx \{\mc{H}_{\alpha}\colon \alpha \in \Omega\} \sqcup \{\mc{C}\}.
\]

\begin{example}\label{ex2.19}
Let \(\{(\mc{H}_{\alpha}, *_{\alpha}) \colon \alpha \in \Omega\}\) be a nonempty set of disjoint semigroups and \((\mc{C}, \circ)\) be a semigroup with the zero element \(\theta\) and such that
\[
\mc{C} \cap \mc{H}_{\alpha} = \varnothing
\]
holds for every \(\alpha \in \Omega\). Let us define a binary operation \(*\) on
\[
\mc{H} := \mc{C} \cup \left(\bigcup_{\alpha \in \Omega} \mc{H}_{\alpha}\right)
\]
as
\begin{equation}\label{ex2.19:e1}
x * y = \begin{cases}
x \circ y, & \text{if } x, y \in \mc{C}\\
x *_{\alpha} y, & \text{if } x, y \in \mc{H}_{\alpha}, \alpha \in \Omega\\
\theta, & \text{otherwise}.
\end{cases}
\end{equation}
It is easy to prove that \(*\) is associative. Hence, \((\mc{H}, *)\) is a semigroup, and, in addition, from~\eqref{ex2.19:e1} it follows directly that
\[
\mc{H} \approx \{\mc{H}_{\alpha}\colon \alpha \in \Omega\} \sqcup \{\mc{C}\}.
\]
\end{example}

\begin{example}\label{ex2.20}
Let \((\mc{C}, \circ)\) and \((\mc{S}, \cdot)\) be disjoint semigroups and let \(\mc{H} = \mc{C} \cup \mc{S}\). Write
\begin{equation}\label{ex2.20:e1}
x*y = \begin{cases}
x \circ y, & \text{if } x, y \in \mc{C}\\
x \cdot y, & \text{if } x, y \in \mc{S}\\
x, & \text{if } x\in \mc{C} \text{ and } y \in \mc{S}\\
y, & \text{if } y\in \mc{C} \text{ and } x \in \mc{S}.
\end{cases}
\end{equation}
Then \(* \colon \mc{H} \times \mc{H} \to \mc{H}\) is an associative operation, and \((\mc{H}, *)\) is a band of semigroups with core \(\mc{C}\).
\end{example}

The defined above band of subsemigroups with given core can be considered as a special case of the \emph{union of band of semigroups} (see, for example, \cite[p.~25]{Clifford1961}). Recall that a semigroup \((\mc{H}, *)\) is a union of band of subsemigroups \(\mc{H}_{\alpha}\), \(\alpha \in \Omega\) if
\begin{equation}\label{e2.22}
P^{\mc{H}} := \{\mc{H}_{\alpha} \colon \alpha \in \Omega\}
\end{equation}
is a partition of \(\mc{H}\) and \(\mc{H}_{\alpha} * \mc{H}_{\alpha} \subseteq \mc{H}_{\alpha}\) holds for every \(\alpha \in \Omega\) and, moreover, for every pair of distinct \(\alpha\), \(\beta \in \Omega\) there is \(\gamma \in \Omega\) such that \(\mc{H}_{\alpha} * \mc{H}_{\beta} \subseteq \mc{H}_{\gamma}\).

The next theorem gives us a characterization of subsemigroups of \(\mc{B}_{X}\) generated by partitions \(P \otimes P_S\) of \(X^{2}\) (see formula~\eqref{e1.7} and Proposition~\ref{p1.16}).

In what follows we denote by \(\theta\) a zero element of semigroup \((\mc{H}, *)\).

\begin{theorem}\label{t2.13}
Let \((\mc{H}, *)\) be a semigroup and let \(E = E(\mc{H})\) be the set of all idempotent elements of \((\mc{H}, *)\). Then the following conditions~\ref{t2.13:s1} and \ref{t2.13:s2} are equivalent.
\begin{enumerate}
\item\label{t2.13:s1} There is a nonempty set \(X\) and a partition \(P\) of \(X\) such that \(|P| \geqslant 2\) and the semigroup \((\mc{H}, *)\) is isomorphic to \((\mc{S}_{P \otimes P_S}, \circ)\).
\item\label{t2.13:s2} The semigroup \((\mc{H}, *)\) is a band of semigroups \(\mc{H}_{\alpha}\) with a core \(\mc{C}\),
\[
\mc{H} \approx \{\mc{H}_{\alpha}\colon \alpha \in \Omega\} \sqcup \{\mc{C}\},
\]
and the following conditions hold.
\begin{enumerate}
\item \(\mc{C}\) belongs to \(\mathbf{H}_{1}\).
\item Every \(\mc{H}_{\alpha}\) is a group of order \(2\).
\item \(E\) is a commutative band.
\item If \(e_1\), \(e_2\) are two distinct nontrivial idempotent elements of \(\mc{C}\), then there is a unique \(e \in E\setminus \mc{C}\) such that
\begin{equation}\label{t2.13:e0}
e_1 = e_1*e \quad \text{and} \quad e_2 = e_2*e.
\end{equation}
Conversely, if \(e \in E\setminus \mc{C}\), then there are exactly two distinct nontrivial \(e_1\), \(e_2 \in \mc{C} \cap E\) such that~\eqref{t2.13:e0} holds.
\item For every \(x \in E \cap \mc{C}\) and every \(y \in \mc{H} \setminus E\) the equality \(x*y=\theta\) (\(y*x = \theta\)) holds if and only if \(x*y\) (\(y*x\)) is idempotent.
\end{enumerate}
\end{enumerate}
\end{theorem}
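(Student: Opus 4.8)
The plan is to prove the equivalence by first establishing $\ref{t2.13:s1}\Rightarrow\ref{t2.13:s2}$ via an explicit computation of the multiplication table of $\mc{S}_{P\otimes P_S}$, and then reversing the construction for $\ref{t2.13:s2}\Rightarrow\ref{t2.13:s1}$. Throughout, write $P=\{X_j\colon j\in J\}$ with $|J|\geqslant 2$ and set $B_{\{j_1,j_2\}}:=(X_{j_1}\times X_{j_2})\cup(X_{j_2}\times X_{j_1})$ for $j_1,j_2\in J$ (so $B_{\{j\}}=X_j^2$). The first step is a careful bookkeeping lemma: using~\eqref{ch2:th6:e4}, compute $B_{\{i_1,i_2\}}\circ B_{\{k_1,k_2\}}$ in all cases. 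One finds that the composition is $\varnothing$ unless $\{i_1,i_2\}$ and $\{k_1,k_2\}$ share an element, and when they do share exactly one element one gets another block $B_{\{\,\cdot\,,\cdot\,\}}$ of $P\otimes P_S$, while if $\{i_1,i_2\}=\{k_1,k_2\}$ with $i_1\neq i_2$ one gets $X_{i_1}^2\cup X_{i_2}^2$, which is \emph{not} a block of $P\otimes P_S$. This forces us to describe $\mc{S}_{P\otimes P_S}$ precisely: its elements are $\varnothing$, the diagonal blocks $X_j^2$, the off-diagonal blocks $B_{\{j_1,j_2\}}$ with $j_1\neq j_2$, and the ``two-diagonal'' relations $X_{j_1}^2\cup X_{j_2}^2$ (and no further elements, since any longer composition stabilizes — this needs a short induction).

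With this description in hand I would identify the structural pieces of $\ref{t2.13:s2}$. The core $\mc{C}$ is the set of relations supported on the diagonal strip, namely $\{\varnothing\}\cup\{X_j^2\colon j\in J\}\cup\{X_{j_1}^2\cup X_{j_2}^2\colon j_1\neq j_2\}$; one checks it is an ideal (any product involving an off-diagonal $B_{\{j_1,j_2\}}$ with a core element either lands in the core or is $\varnothing$) and that it lies in $\mathbf{H}_1$ essentially by Corollary~\ref{c3.5} applied after noting $\mc{C}$ is generated by the singletons $\{\langle i,i'\rangle\}$ in the $X^2$ obtained by collapsing each $X_j$ — more precisely $\mc{C}\cong\mc{S}_{Q\otimes Q}$ for the trivial partition $Q$ on a $|J|$-point set wait, one must instead verify the four axioms $(ii_1)$--$(ii_4)$ directly, which is routine. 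The groups $\mc{H}_\alpha$ are indexed by unordered pairs $\{j_1,j_2\}$ with $j_1\neq j_2$, with $\mc{H}_{\{j_1,j_2\}}=\{B_{\{j_1,j_2\}},\,X_{j_1}^2\cup X_{j_2}^2\}$: here $B_{\{j_1,j_2\}}$ is the involution and $X_{j_1}^2\cup X_{j_2}^2$ is the identity, since $B\circ B = X_{j_1}^2\cup X_{j_2}^2$ and $(X_{j_1}^2\cup X_{j_2}^2)\circ B = B$. Condition $(ii_3)$ (commutativity of $E$) and $(ii_4)$ and $(ii_5)$ then all read off from the composition table. Condition $(ii_4)$ is the transparent statement that $X_{j_1}^2$ and $X_{j_2}^2$ are both absorbed by the idempotent $X_{j_1}^2\cup X_{j_2}^2\in E\setminus\mc{C}$ and that this idempotent is the unique one doing so, and conversely each $X_{j_1}^2\cup X_{j_2}^2$ absorbs exactly the two diagonal blocks $X_{j_1}^2, X_{j_2}^2$.

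For the converse $\ref{t2.13:s2}\Rightarrow\ref{t2.13:s1}$, the idea is to reconstruct the index set $J$ and the partition. Condition $(ii_1)$ gives $\mc{C}\in\mathbf{H}_1$, so by Corollary~\ref{c3.5} there is a set $Y$ with $\mc{C}\cong\mc{S}_{P_{\Delta_{Y^2}}}$; the nontrivial idempotents of $\mc{C}$ are in bijection with $Y$, which will be our index set $J$ (so $X_j$ are disjoint copies and $X:=\bigsqcup_{j\in J}X_j$, say each $|X_j|=1$ suffices). Then $(ii_2)$--$(ii_4)$ let us match: each group $\mc{H}_\alpha$ has a nontrivial idempotent $e_\alpha\in E\setminus\mc{C}$, and by $(ii_4)$ this $e_\alpha$ corresponds to an unordered pair $\{j_1,j_2\}$ of elements of $J$ (the two diagonal idempotents it absorbs), and the non-idempotent generator of $\mc{H}_\alpha$ is sent to $B_{\{j_1,j_2\}}$. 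Defining $\Phi$ on generators this way and checking it respects products amounts to re-deriving the composition table of part one from the axioms; conditions $(ii_2)$, $(ii_3)$, $(ii_5)$ are exactly what is needed to verify the mixed products (group element times core element, and element of $\mc{H}_{\alpha_1}$ times element of $\mc{H}_{\alpha_2}$). \textbf{The main obstacle} I anticipate is the bookkeeping in the converse direction: one must show the axioms $(ii_1)$--$(ii_5)$ are not merely necessary but \emph{sufficient} to pin down every product in $\mc{H}$, in particular that a product $a*b$ with $a\in\mc{H}_{\{j_1,j_2\}}$ and $b\in\mc{H}_{\{j_2,j_3\}}$ (sharing one index) lands in $\mc{H}_{\{j_1,j_3\}}$ at the correct element — this is forced by $(ii_5)$ together with the band structure of $E$, but teasing it out cleanly, and separately handling degenerate cases like $j_1=j_3$, is where the real work lies. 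A secondary subtlety is the requirement $|P|\geqslant 2$ and ensuring the reconstructed $P$ genuinely has at least two blocks, which follows as long as $\mc{C}$ has at least two nontrivial idempotents, equivalently $|\mc{C}|\geqslant 2$ in the right sense; one should check the axioms indeed guarantee this (if $\Omega\neq\varnothing$ then by $(ii_4)$ there are two distinct nontrivial idempotents in $\mc{C}$, and if $\Omega=\varnothing$ the statement degenerates and must be excluded or handled by noting $\mc{H}=\mc{C}$ then needs $|P|\ge 2$ anyway).
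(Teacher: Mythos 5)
Your overall strategy (compute the multiplication table of \(\mc{S}_{P\otimes P_S}\), read off the band-with-core decomposition, then reverse the construction) is the same as the paper's, but your structural identification contains an error that propagates through both directions. Your inventory of \(\mc{S}_{P\otimes P_S}\) omits the asymmetric rectangles \(X_{j_1}\times X_{j_2}\) with \(j_1\neq j_2\): these are genuinely in the semigroup, since for example \(B_{\{j_1,j_2\}}\circ X_{j_2}^{2}=X_{j_1}\times X_{j_2}\). For \(|J|=2\) the semigroup has \(7\) elements (cf.\ Remark~\ref{r4.12}), whereas your list gives only \(5\). Consequently your candidate core \(\{\varnothing\}\cup\{X_j^{2}\}\cup\{X_{j_1}^{2}\cup X_{j_2}^{2}\}\) is wrong on three counts: it is not an ideal (\(X_{j_1}^{2}\circ B_{\{j_1,j_2\}}=X_{j_1}\times X_{j_2}\) escapes it); it consists entirely of idempotents, so it cannot satisfy condition \((ii_3)\) of Theorem~\ref{ch2:th6} and hence is not in \(\mathbf{H}_1\); and it overlaps the groups \(\mc{H}_{\{j_1,j_2\}}=\{B_{\{j_1,j_2\}},X_{j_1}^{2}\cup X_{j_2}^{2}\}\), which by Definition~\ref{d2.15} must partition \(\mc{H}\setminus\mc{C}\). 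The correct core is \(\mc{C}=\mc{S}_{P\otimes P}=\{\varnothing\}\cup\{X_{j_1}\times X_{j_2}\colon \<j_1,j_2>\in J^{2}\}\), i.e.\ it contains all the (possibly non-idempotent) rectangles; this is exactly what makes \((ii_1)\) and \((ii_5)\) nonvacuous, since \((ii_5)\) governs products of diagonal idempotents with the non-idempotent elements \(X_{j_1}\times X_{j_2}\in\mc{C}\setminus E\) and \(B_{\{j_1,j_2\}}\in\mc{H}\setminus(\mc{C}\cup E)\).

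The same misunderstanding surfaces in your converse direction: you assert that a product \(a*b\) with \(a\in\mc{H}_{\{j_1,j_2\}}\) and \(b\in\mc{H}_{\{j_2,j_3\}}\) should land in \(\mc{H}_{\{j_1,j_3\}}\). By Definition~\ref{d2.15}, distinct components multiply into the \emph{core}, and indeed in the model \(B_{\{j_1,j_2\}}\circ B_{\{j_2,j_3\}}=X_{j_1}\times X_{j_3}\in\mc{S}_{P\otimes P}\), which is not an element of any \(\mc{H}_\alpha\). Once the core is identified correctly, the converse proceeds as you intend but with different bookkeeping: Theorem~\ref{ch2:th6} supplies an isomorphism \(\Phi\colon\mc{C}\to\mc{S}_{P\otimes P}\) (onto the full rectangle semigroup, not a diagonal one), which one extends first to the idempotents of \(E\setminus\mc{C}\) via \((ii_4)\) (sending each to \(X_{j_1}^{2}\cup X_{j_2}^{2}\)) and then to the remaining group elements (sending each to \(B_{\{j_1,j_2\}}\)); the homomorphism check reduces mixed products to the core using \(t=t^{3}\) in each order-\(2\) group, with \((ii_5)\) needed precisely to rule out \(x*y\) being a nonzero idempotent when it should be an asymmetric rectangle. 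As written, your decomposition would have to be discarded and rebuilt before either implication can be completed.
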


\begin{proof}
\(\ref{t2.13:s1} \Rightarrow \ref{t2.13:s2}\). Let \(P = \{X_j \colon j \in J\}\) be a partition of a set \(X\) with \(|P| \geqslant 2\) and let \((\mc{S}_{P\otimes P_S}, \circ)\) be isomorphic to \((\mc{H}, *)\). We must find a proper ideal \(\mc{C}\) of \((\mc{H}, *)\) such that \(\mc{C} \in \mathbf{H}_{1}\) and prove that \(\mc{H}\) is a band with core \(\mc{C}\) satisfying conditions \((ii_2)-(ii_5)\). Since \((\mc{H}, *)\) and \((\mc{S}_{P\otimes P_S}, \circ)\) are isomorphic and \(\mc{S}_{P \otimes P}\) belongs to \(\mathbf{H}_1\), it suffices to show that \(\mc{S}_{P \otimes P_S}\) is a band with core \(\mc{S}_{P \otimes P}\) and that conditions \((ii_2) - (ii_5)\) hold with \(\mc{H} = \mc{S}_{P \otimes P_S}\) and \(\mc{C} = \mc{S}_{P \otimes P}\).

Suppose first \(|J| = 2\). Then we have \(P = \{X_1, X_2\}\) and
\[
P \otimes P = \{X_1^{2}, X_2^{2}, X_1 \times X_2, X_2 \times X_1\}
\]
and
\[
P \otimes P_S = \{X_1^{2}, X_2^{2}, (X_1 \times X_2) \cup (X_2 \times X_1)\}.
\]
Write for short
\[
a_{1,1} = X_1^{2}, \quad a_{2,2} = X_2^{2}, \quad a_{1,2} = (X_1 \times X_2) \cup (X_2 \times X_1).
\]
In this notation we obtain
\begin{equation}\label{t2.13:e1}
X_1 \times X_2 = a_{1,2} \circ a_{2,2}, \ X_2 \times X_1 = a_{1,2} \circ a_{1,1}  \text{ and } \varnothing = a_{1,1} \circ a_{2,2}.
\end{equation}
Lemma~\ref{l3.3} and equalities~\eqref{t2.13:e1} imply
\begin{equation}\label{t2.13:e2}
\mc{S}_{P \otimes P} = \{a_{1,1}, a_{2,2}, a_{1,2} \circ a_{2,2}, a_{1,2} \circ a_{1,1}, a_{1,1} \circ a_{2,2}\}.
\end{equation}
Thus \(\mc{S}_{P \otimes P}\) is a subsemigroup of \(\mc{S}_{P \otimes P_S}\). We notice that \(P \otimes P_S\) is a set of generators of \(\mc{S}_{P \otimes P_S}\), and \(P \otimes P\) is a set of generators of \(\mc{S}_{P \otimes P}\), and \(a_{1,2}\) is the unique element of \((P \otimes P_S) \setminus (P \otimes P)\). Consequently, the subsemigroup \(\mc{S}_{P \otimes P}\) of \(\mc{S}_{P \otimes P_S}\) is an ideal of \(\mc{S}_{P \otimes P_S}\) if and only if
\[
a_{1,2} \circ x \in \mc{S}_{P \otimes P} \quad \text{and} \quad x \circ a_{1,2}  \in \mc{S}_{P \otimes P}
\]
hold for every \(x \in P \otimes P\). If \(x = a_{1,1}\) or \(x = a_{2,2}\), then \(a_{1,2} \circ x \in \mc{S}_{P \otimes P}\) follows from~\eqref{t2.13:e2}. If
\[
x = X_2 \times X_1 = a_{1,2} \circ a_{1,1},
\]
then, using the equality
\begin{equation}\label{t2.13:e4}
a_{1,2} \circ a_{1,2} = a_{1,1} \cup a_{2,2}
\end{equation}
we obtain
\begin{multline}\label{t2.13:e3}
a_{1,2} \circ x = a_{1,2} \circ (a_{1,2} \circ a_{1,1}) = (a_{1,2} \circ a_{1,2}) \circ a_{1,1} \\
= (a_{1,1} \cup a_{2,2}) \circ a_{1,1} = a_{1,1},
\end{multline}
that implies \(a_{1,2} \circ x \in \mc{S}_{P \otimes P}\). Thus \(a_{1,2} \circ x \in \mc{S}_{P \otimes P}\) holds for every \(x \in P \otimes P\). Analogously we can prove that \(x \circ a_{1,2} \in \mc{S}_{P \otimes P}\) is valid for every \(x \in P \otimes P\). Thus \(\mc{S}_{P \otimes P}\) is an ideal of \(\mc{S}_{P \otimes P_S}\). This ideal is proper because we have \(a_{1,2} \in \mc{S}_{P \otimes P_S} \setminus \mc{S}_{P \otimes P}\) and \(\mc{S}_{P \otimes P}\) is single-point if and only if \(|P| = 1\).

A similar proof shows that for every \(P = \{X_j \colon j \in J\}\) with \(|J| \geqslant 3\) the semigroup \(\mc{S}_{P \otimes P}\) is a proper ideal of \(\mc{S}_{P \otimes P_S}\).

Let us prove that \(\mc{S}_{P \otimes P_S}\) is a band with core \(\mc{S}_{P \otimes P}\) and verify conditions \((ii_2)-(ii_5)\).

Write for short
\[
a_{i, j} = (X_i \times X_j) \cup (X_j \times X_i)
\]
for all \(i\), \(j \in J\). Let \(a\) belong to \(\mc{S}_{P \otimes P_S} \setminus \mc{S}_{P \otimes P}\). Since \(\mc{S}_{P \otimes P}\) is an ideal of \(\mc{S}_{P \otimes P_S}\), the element \(a\) has a form
\[
a = a_{i_1, j_1} \circ a_{i_2, j_2} \circ \ldots \circ a_{i_n, j_n},
\]
where \(n\) is a positive integer number and
\begin{equation}\label{t2.13:e20.1}
a_{i_k, j_k} \in (P \otimes P_S) \setminus (P \otimes P)
\end{equation}
for every \(k \in \{1, \ldots, n\}\). We claim that \(a\) is an element of the cyclic semigroup
\[
\langle\langle a_{i_1, j_1} \rangle\rangle = \{a_{i_1, j_1}, a_{i_1, j_1}^2, a_{i_1, j_1}^3, \ldots\}.
\]
It is clear if \(n = 1\). Let us consider the case \(n \geqslant 2\). Condition~\eqref{t2.13:e20.1} implies \(i_k \neq j_k\) for every \(k \in \{1, \ldots, n\}\). It is easy to prove that
\begin{equation}\label{t2.13:e21}
a_{i_1, j_1} \circ a_{i_2, j_2} = \varnothing
\end{equation}
holds if \(\{i_1, j_1\} \cap \{i_2, j_2\} = \varnothing\). Consequently, \(a \in \mc{S}_{P \otimes P_S} \setminus \mc{S}_{P \otimes P}\) implies
\[
|\{i_1, j_1\} \cap \{i_2, j_2\}| \geqslant 1.
\]
Let
\begin{equation}\label{t2.13:e11}
|\{i_1, j_1\} \cap \{i_2, j_2\}| = 1.
\end{equation}
Without loss of generality we can set \(j_1 = j_2\). Then
\begin{equation*}
i_1 \neq j_1 \neq i_2 \neq i_1
\end{equation*}
and
\begin{multline}\label{t2.13:e13}
a_{i_1, j_1} \circ a_{i_2, j_2} = a_{i_1, j_1} \circ a_{j_1, i_2} \\
= \Bigl((X_{i_1} \times X_{j_1}) \circ \bigl((X_{j_1} \times X_{i_2}) \cup (X_{i_2} \times X_{j_1})\bigr)\Bigr) \\
\cup \Bigl((X_{j_1} \times X_{i_1}) \circ \bigl((X_{j_1} \times X_{i_2}) \cup (X_{i_2} \times X_{j_1})\bigr)\Bigr) \\
= (X_{i_1} \times X_{i_2}) \cup \varnothing = X_{i_1} \times X_{i_2}
\end{multline}
hold. Hence, if we have \eqref{t2.13:e11}, then \(a \in \mc{S}_{P \otimes P}\) holds, contrary to \(a \in \mc{S}_{P \otimes P_S} \setminus \mc{S}_{P \otimes P}\). Let us consider the case when
\[
|\{i_1, j_1\} \cap \{i_2, j_2\}| = 2.
\]
The last equality holds if and only if \(\{i_1, j_1\} = \{i_2, j_2\}\). In this case we obtain
\begin{multline*}%
a_{i_1, j_1} \circ a_{i_2, j_2} = a_{i_1, j_1} \circ a_{i_1, j_1} = a_{i_1, j_1}^2 \\
= \bigl((X_{i_1} \times X_{j_1}) \cup (X_{j_1} \times X_{i_1})\bigr) \circ \bigl((X_{i_1} \times X_{j_1}) \cup (X_{j_1} \times X_{i_1})\bigr) = X_{i_1}^{2} \cup X_{j_1}^{2}.
\end{multline*}
From Lemma~\ref{l3.3} it follows that
\[
a_{i_1, j_1} \notin \mc{S}_{P \otimes P} \quad \text{and} \quad a_{i_1, j_1}^2 \notin \mc{S}_{P \otimes P}
\]
if \(i_1 \neq j_1\). Similarly, it can be shown that
\[
\<i_1, j_1> = \<i_3, j_3>, \quad \ldots, \quad \<i_1, j_1> = \<i_n, j_n>.
\]
The membership relation \(a \in \<\<a_{i_1, j_1}>>\) follows.

It is easy to prove that every cyclic semigroup
\[
\<\<a_{i, j}>> = \{a_{i, j}, a_{i, j}^2, a_{i, j}^3, \ldots\}
\]
is a group of order \(2\). Indeed, we have
\[
a_{i, j} = (X_{i} \times X_{j}) \cup (X_{j} \times X_{i}) \neq X_{i}^2 \cup X_{j}^2 = a_{i, j}^2
\]
and
\begin{multline*}
a_{i, j}^3 = a_{i, j}^2 \circ a_{i, j} \\
= \bigl(X_{i}^{2} \cup X_{j}^{2}\bigr) \circ \bigl((X_{i} \times X_{j}) \cup (X_{j} \times X_{i})\bigr)\\
= (X_{i} \times X_{j}) \cup (X_{j} \times X_{i}) = a_{i, j},
\end{multline*}
i.e., \(a_{i, j}^3 = a_{i, j}\). The last equality implies
\[
(a_{i, j}^2)^2 = a_{i, j}^4 = a_{i, j}^3 \circ a_{i, j} = a_{i, j} \circ a_{i, j} = a_{i, j}^2.
\]
Thus \(\<\<a_{i, j}>> = \{a_{i, j}, a_{i, j}^2\}\) is a group of order \(2\) with the identity element \(a_{i, j}^2\).

Suppose now that \(x_1\), \(x_2 \in \mc{S}_{P \otimes P_S} \setminus \mc{S}_{P \otimes P}\). If there is \(\<i, j> \in J^{2}\), \(i \neq j\), such that \(x_1 \in \<\<a_{i,j}>>\) and \(x_2 \in \<\< a_{i,j}>>\), then \(x_1 \circ x_2 \in \langle a_{i,j}\rangle\) holds because \(\<\<a_{i,j}>>\) is a group. If we have
\[
x_1 \in \<\<a_{i_1,j_1}>> \quad \text{and} \quad x_2 \in \<\<a_{i_2,j_2}>>
\]
and \(\{i_1, j_1\} \neq \{i_2, j_2\}\), then there are integer \(m \geqslant 2\) and \(n \geqslant 2\) such that
\[
x_1 \circ x_2 = a_{i_1,j_1}^m \circ a_{i_2,j_2}^n = a_{i_1,j_1}^{m-1} \circ (a_{i_1,j_1} \circ a_{i_2,j_2}) \circ a_{i_2,j_2}^{n-1}.
\]
If \(|\{i_1, j_1\} \cap \{i_2, j_2\}| = 0\), then \(a_{i_1,j_1} \circ a_{i_2,j_2} = \varnothing\) holds and, moreover, if we have \(|\{i_1, j_1\} \cap \{i_2, j_2\}| = 1\), then, as in~\eqref{t2.13:e21}, \eqref{t2.13:e13}, we obtain
\[
a_{i_1,j_1} \circ a_{i_2,j_2} \in \mc{S}_{P \otimes P}.
\]
Since \(\mc{S}_{P \otimes P}\) is an ideal of \(\mc{S}_{P \otimes P_S}\), it follows that \(x_1 \circ x_2 \in \mc{S}_{P \otimes P}\). Note now that equality \(\<\<a_{i, j}>> = \{a_{i, j}, a_{i, j}^2\}\) implies
\[
\<\<a_{i_1,j_1}>> \cap \<\<a_{i_2,j_2}>> = \varnothing
\]
if \(\{i_1, j_1\} \neq \{i_2, j_2\}\). Thus \(\mc{S}_{P \otimes P_S}\) is a band of groups \(\<\< a_{i,j}>>\) with core \(\mc{S}_{P \otimes P}\) and condition \((ii_2)\) holds.

\((ii_3)\). In what follows we denote by \(E = E(\mc{S}_{P \otimes P_S})\) the set of all idempotent elements of \(\mc{S}_{P \otimes P_S}\). It suffices to show that
\begin{equation}\label{t2.13:e16}
e_1 \circ e_2 = e_2 \circ e_1 \quad \text{and} \quad e_1 \circ e_2 \in E
\end{equation}
hold for all \(e_1\), \(e_2 \in E\). First of all we note that~\eqref{t2.13:e16} is trivially valid if \(e_1 = \varnothing\) or \(e_2 = \varnothing\) or if \(e_1\) and \(e_2\) are idempotent in \(\mc{S}_{P \otimes P}\).

Let
\[
\varnothing \neq e_1 \in \mc{S}_{P \otimes P} \cap E \quad \text{and} \quad e_2 \in (\mc{S}_{P \otimes P_{S}} \setminus \mc{S}_{P \otimes P}) \cap E.
\]
Then there are \(j_1\), \(j_2\), \(j_3 \in J\) such that \(j_2 \neq j_3\), and
\begin{equation}\label{t2.13:e17}
e_1 = X_{j_1}^2, \quad \text{and} \quad e_2 = X_{j_2}^2 \cup X_{j_3}^2.
\end{equation}
Direct calculations show that
\begin{equation}\label{t2.13:e18}
e_1 \circ e_2 = e_2 \circ e_1 = \begin{cases}
e_1, & \text{if } j_1 \in \{j_2, j_3\}\\
\varnothing, & \text{if } j_1 \notin \{j_2, j_3\},
\end{cases}
\end{equation}
that implies~\eqref{t2.13:e16}. If we have
\[
\varnothing \neq e_2 \in \mc{S}_{P \otimes P} \cap E \quad \text{and} \quad e_1 \in (\mc{S}_{P \otimes P_{S}} \setminus \mc{S}_{P \otimes P}) \cap E,
\]
then~\eqref{t2.13:e16} is proved in a similar way.

Now let
\[
e_1, e_2 \in (\mc{S}_{P \otimes P_{S}} \setminus \mc{S}_{P \otimes P}) \cap E.
\]
Then there are \(i_1\), \(j_1\), \(i_2\), \(j_2 \in J\) such that \(i_1 \neq j_1\) and \(i_2 \neq j_2\) and
\[
e_1 = X_{i_1}^2 \cup X_{j_1}^2 \quad \text{and} \quad e_2 = X_{i_2}^2 \cup X_{j_2}^2.
\]
Using these equalities we obtain
\begin{equation*}
e_1 \circ e_2 = e_2 \circ e_1 = \begin{cases}
e_1, & \text{if } \{i_1, j_1\} = \{i_2, j_2\}\\
\varnothing, & \text{if } \{i_1, j_1\} \cap \{i_2, j_2\} = \varnothing\\
X_{j}^2 , & \text{if \(j\) is a unique element of } \{i_1, j_1\} \cap \{i_2, j_2\}.
\end{cases}
\end{equation*}
The last equality also implies~\eqref{t2.13:e16}. Condition~\((ii_3)\) follows.

\((ii_4)\). Let \(e_1\) and \(e_2\) be two distinct nontrivial idempotent elements of \(\mc{S}_{P \otimes P}\). Then there are \(j_1\), \(j_2 \in J\) such that
\begin{equation}\label{t2.13:e19}
e_1 = X_{j_1}^2 \quad \text{and} \quad e_2 = X_{j_2}^2.
\end{equation}
Using~\eqref{t2.13:e18} we can show that
\begin{equation}\label{t2.13:e20}
e = X_{j_1}^2 \cup X_{j_2}^2
\end{equation}
is a unique idempotent element of \(\mc{S}_{P \otimes P_{S}} \setminus \mc{S}_{P \otimes P}\) for which~\eqref{t2.13:e0} holds.

Conversely, if \(e \in E \cap (\mc{S}_{P \otimes P_{S}} \setminus \mc{S}_{P \otimes P})\), then there are distinct \(j_1\), \(j_2 \in J\) such that~\eqref{t2.13:e20} holds. For every nontrivial idempotent \(e_3 \in \mc{S}_{P \otimes P}\), the equalities \(e_3 \circ e = e_3\) and~\eqref{t2.13:e20} imply \(e_3 = X_{j_1}^2\) or \(e_3 = X_{j_2}^2\), as required.

\((ii_5)\). We want to prove that
\begin{equation}\label{t2.13:e20.2}
(x\circ y = \varnothing) \Leftrightarrow (x\circ y \in E)
\end{equation}
and
\begin{equation}\label{t2.13:e20.3}
(y\circ x = \varnothing) \Leftrightarrow (y\circ x \in E)
\end{equation}
are valid for all \(x \in E \cap \mc{C}\) and \(y \in \mc{S}_{P \otimes P_{S}} \setminus E\).

Let us prove~\eqref{t2.13:e20.2}. The implication \((x\circ y = \varnothing) \Rightarrow (x\circ y \in E)\) is trivial. In particular, this implication is valid if \(x = \varnothing\). If \(x \in E \cap \mc{C}\), and \(x \neq \varnothing\), and \(y \in \mc{S}_{P \otimes P_{S}} \setminus E\), then there are \(j_1\), \(j_2\), \(j_3 \in J\) such that \(j_2 \neq j_3\), and
\[
x = X_{j_1}^2, \quad \text{and} \quad y = (X_{j_2} \times X_{j_3}) \cup (X_{j_3} \times X_{j_2}).
\]
These equalities imply
\begin{equation}\label{t2.13:e20.4}
x\circ y = \begin{cases}
\varnothing, & \text{if } j_1 \neq \{j_2, j_3\}\\
X_{j_2} \times X_{j_3}, & \text{if } j_1 = j_2\\
X_{j_3} \times X_{j_2}, & \text{if } j_1 = j_3.
\end{cases}
\end{equation}
Since every idempotent element of \(\mc{C}\) is either trivial or has a form \(e = X_{j}^2\) for some \(j \in J\), we see that~\eqref{t2.13:e20.4} implies the converse implication
\[
(x\circ y \in E) \Rightarrow (x\circ y = \varnothing).
\]
Equivalence~\eqref{t2.13:e20.2} is valid. The similar proof shows that~\eqref{t2.13:e20.3} is also valid.

\((ii) \Rightarrow (i)\). Suppose \((\mc{H}, *)\) is a band of semigroups \(\mc{H}_{\alpha}\) with core \(\mc{C}\),
\[
\mc{H} \approx \{\mc{H}_{\alpha}\colon \alpha \in \Omega\} \sqcup \{\mc{C}\},
\]
such that \(\mc{C} \in \mathbf{H}_{1}\) and conditions \((ii_2)-(ii_5)\) hold. By Theorem~\ref{ch2:th6}, there are a nonempty set \(X\) and a partition \(P = \{X_j \colon j \in J\}\) of \(X\) such that \((\mc{C}, *)\) is isomorphic to \((\mc{S}_{P \otimes P}, \circ)\).

Let \(\Phi \colon \mc{C} \to \mc{S}_{P \otimes P}\) be an isomorphism. We want to show that there is a continuation of \(\Phi\) to an isomorphism \(\Phi_{S} \colon \mc{H} \to \mc{S}_{P \otimes P_{S}}\) and that \(|P| \geqslant 2\) holds. The construction of \(\Phi_{S}\) will be carried out in two stages.

At the first stage, we will extend \(\Phi\) to a monomorphism (injective homomorphism) \(\Phi_1 \colon \mc{C} \cup E \to \mc{S}_{P \otimes P_{S}}\). It should be noted here that \(\mc{C} \cup E\) is also a band of semigroups with core \(\mc{C}\) because we have
\[
\mc{H} \approx \{\mc{H}_{\alpha}\colon \alpha \in \Omega\} \sqcup \{\mc{C}\},
\]
and every group \(\mc{H}_{\alpha}\) contains a unique idempotent element, and, for every \(e \in \mc{H} \setminus \mc{C}\), there is a unique \(\alpha \in \Omega\) such that \(\{e\}\) is a subgroup of \((\mc{H}_{\alpha}, *)\).

In the second stage we will prove that the monomorphism \(\Phi_1\) can be extended to an isomorphism \(\Phi_{S} \colon \mc{H} \to \mc{S}_{P \otimes P_{S}}\).

\emph{Inequality} \(|P| \geqslant 2\). The core \(\mc{C}\) has at least two nontrivial idempotent elements. Indeed, \(\{\mc{H}_{\alpha}\colon \alpha \in \Omega\}\) is a partition of \(\mc{H} \setminus \mc{C}\). Consequently, \(\mc{H} \setminus \mc{C}\) and \(\Omega\) are nonempty sets (see Remark~\ref{r3}). By condition~\((ii_2)\), every \(\mc{H}_{\alpha}\) is a group. The identity element of \(\mc{H}_{\alpha}\) is an idempotent element of \(\mc{H}\) belonging to \(\mc{H} \setminus \mc{C}\). Using condition~\((ii_4)\), we see that \(\mc{C}\) contains at least two nontrivial idempotent elements as stated above. Since \(\mc{C}\) and \(\mc{S}_{P \otimes P}\) are isomorphic and all idempotent elements of \(\mc{S}_{P \otimes P}\) are trivial if \(|P| = 1\), we have \(|P| \geqslant 2\).

\emph{Monomorphism} \(\Phi_1 \colon \mc{C} \cup E \to \mc{S}_{P \otimes P_S}\). By condition \((ii_4)\), for every \(x \in E \setminus \mc{C}\), there are exactly two distinct nontrivial idempotent elements \(x_1\), \(x_2 \in \mc{C}\) such that
\begin{equation}\label{t2.13:e22}
x_1 = x_1 * x \quad \text{and} \quad x_2 = x_2 * x.
\end{equation}
Let us define a mapping \(\Phi_1 \colon \mc{C} \cup E \to \mc{S}_{P \otimes P_S}\) as follows
\begin{equation}\label{t2.13:e23}
\Phi_1 (x) = \begin{cases}
\Phi(x), & \text{if } x \in \mc{C}\\
\Phi(x_1) \cup \Phi(x_2), & \text{if } x \in E \setminus \mc{C},
\end{cases}
\end{equation}
where \(x_1\) and \(x_2\) are idempotent elements from~\eqref{t2.13:e22}. Note that if \(z\) is a nontrivial idempotent element in \(\mc{C}\), then there is a unique \(j \in J\) such that \(\Phi(z) = X_{j}^2\). Consequently, in~\eqref{t2.13:e23} we have
\begin{equation}\label{t2.13:e24}
\Phi_{1}(x) = \Phi(x_1) \cup \Phi(x_2) = X_{j_1}^2 \cup X_{j_2}^2,
\end{equation}
where \(X_{j_1}^2 = \Phi(x_1)\) and \(X_{j_2}^2 = \Phi(x_2)\). Since
\[
\Phi(\mc{C}) = \mc{S}_{P \otimes P} \subseteq \mc{S}_{P \otimes P_S}
\]
and, for every \(x \in E \setminus \mc{C}\), we have
\[
\Phi_{1}(x) \in \mc{S}_{P \otimes P_{S}} \setminus \mc{S}_{P \otimes P},
\]
\(\Phi_{1}\) really is a mapping from \(\mc{C} \cup E\) to \(\mc{S}_{P \otimes P_S}\).

The mapping \(\Phi_1\) is injective because \(\Phi\) is injective and because condition~\((ii_4)\) and equalities~\eqref{t2.13:e23}--\eqref{t2.13:e24} imply
\begin{equation*}
\Phi_1(x) \neq \Phi_1(y)
\end{equation*}
for all different \(x\), \(y \in E \setminus \mc{C}\).

Note that
\begin{equation}\label{t2.13:e24.1}
\Phi_1(\mc{C}\cup E) = \mc{S}_{P \otimes P} \cup \{X_{j}^{2} \cup X_{i}^{2} \colon i, j \in J \text{ and } i \neq j\}.
\end{equation}
Indeed, we evidently have \(\Phi_{1}(\mc{C}) = \Phi(\mc{C}) = \mc{S}_{P \otimes P}\) and
\begin{equation}\label{t2.13:e24.3}
\Phi_{1}(E \setminus \mc{C}) \subseteq \{X_{j}^{2} \cup X_{i}^{2} \colon i, j \in J \text{ and } i \neq j\}.
\end{equation}
Using condition~\((ii_4)\), for any two distinct \(x_1\), \(x_2 \in \mc{C} \cap E\), we can find \(x \in E \setminus \mc{C}\) such that~\eqref{t2.13:e22} holds. Since \(\Phi \colon \mc{C} \to \mc{S}_{P \otimes P}\) is an isomorphism, we have the equality
\begin{equation}\label{t2.13:e24.1*}
\Phi(E \cap \mc{C}) = \{X_j^2 \colon j \in J\}.
\end{equation}
Consequently, if \(i\), \(j \in J\) and \(i \neq j\), then there is \(x \in E \setminus \mc{C}\) such that
\[
\Phi_{1}(x) = \Phi(x_1) \cup \Phi(x_2) = X_j^2 \cup X_i^2.
\]
Hence, the inclusion
\begin{equation}\label{t2.13:e24.2}
\Phi_{1}(E \setminus \mc{C}) \supseteq \{X_{j}^{2} \cup X_{i}^{2} \colon i, j \in J \text{ and } i \neq j\}
\end{equation}
holds. The last inclusion, \eqref{t2.13:e24.3} and \eqref{t2.13:e24.1*} imply~\eqref{t2.13:e24.1}.

The mapping \(\Phi_1\) is a monomorphism if and only if
\begin{equation}\label{t2.13:e25}
\Phi_1(x * y) = \Phi_1(x) \circ \Phi_1(y)
\end{equation}
holds for all \(x\), \(y \in \mc{C} \cup E\). Since \(\Phi_1\) is an extension of the isomorphism \(\Phi\), equality~\eqref{t2.13:e25} is trivial for \(x\), \(y \in \mc{C}\). In addition, we have
\[
\Phi_{1}(x*x) = \Phi_{1}(x) \circ \Phi_{1}(x)
\]
for every \(x \in E \cup \mc{C}\) because, for \(x \in E \setminus \mc{C}\), from \eqref{t2.13:e24} it follows that \(\Phi_1(x)\) is an idempotent element of \(\mc{S}_{P \otimes P_S}\). Consequently, it suffices to prove equality~\eqref{t2.13:e25} in the following cases:
\begin{gather}
\label{t2.13:e26}
x \in E \setminus \mc{C} \quad \text{and} \quad y \in E \cap \mc{C},\\
\label{t2.13:e26.1}
x \in E \cap \mc{C} \quad \text{and} \quad y \in E \setminus \mc{C},\\
\label{t2.13:e27}
x \in E \setminus \mc{C} \quad \text{and} \quad y \in \mc{C} \setminus E,\\
\label{t2.13:e28}
x \in \mc{C} \setminus E \quad \text{and} \quad y \in E \setminus \mc{C},\\
\label{t2.13:e29}
x, y \in E \setminus \mc{C} \quad \text{and} \quad x \neq y.
\end{gather}

Before proceeding to the proof of equality~\eqref{t2.13:e25} for cases~\eqref{t2.13:e26}--\eqref{t2.13:e29}, we also note that this equality holds if
\begin{equation}\label{t2.13:e29.1}
x = \theta \quad \text{or} \quad y = \theta,
\end{equation}
when \(\theta\) is the zero of \(\mc{C}\). To see it, we suppose that~\eqref{t2.13:e29.1} holds. By Lemma~\ref{l2.18}, \(\theta\) also is the zero of \((\mc{H}, *)\). Thus, we have \(x * y =\theta\) that implies
\begin{equation}\label{t2.13:e29.2}
\Phi_{1}(x*y) = \Phi_{1}(\theta) =\Phi(\theta) = \varnothing.
\end{equation}
Since \(\Phi \colon \mc{C} \to \mc{S}_{P \otimes P}\) is an isomorphism and \(\Phi_{1}\) is an extension of \(\Phi\), from~\eqref{t2.13:e29.1} and \(|P| \geqslant 2\) it follows that
\[
\Phi_{1}(x) = \varnothing \quad \text{or} \quad \Phi_{1}(y) = \varnothing.
\]
Consequently,
\begin{equation}\label{t2.13:e29.3}
\Phi_{1}(x) \circ \Phi_{1}(y) = \varnothing
\end{equation}
holds. Thus, \eqref{t2.13:e29.1} implies equality \eqref{t2.13:e25}.

Further, in proving equality~\eqref{t2.13:e25}, we will always assume
\begin{equation}\label{t2.13:e29.4}
y \neq \theta
\end{equation}
and, for \(x \in E \setminus \mc{C}\), we will set \(\Phi_{1}(x) = X_{j_1}^{2} \cup X_{j_2}^{2}\), where \(j_1\) and \(j_2\) are distinct elements of \(J\) such that
\begin{equation}\label{t2.13:e29.5}
x_1 := \Phi^{-1}(X_{j_1}^{2}), \quad x_2 := \Phi^{-1}(X_{j_2}^{2}), \quad x_1 = x_1 * x, \quad x_2 = x_2 * x
\end{equation}
(see \eqref{t2.13:e22}--\eqref{t2.13:e24}).

\emph{Case~\eqref{t2.13:e26}}. Taking condition~\eqref{t2.13:e29.4} into account, we can find \(j_3 \in J\) for which
\[
\Phi_1(y) = \Phi(y) = X_{j_3}^2.
\]
Hence, the equality
\begin{equation}\label{t2.13:e30}
\Phi_1(x) \circ \Phi_1(y) = \begin{cases}
\varnothing, & \text{if } j_3 \notin \{j_1, j_2\}\\
X_{j_3}^2, & \text{if } j_3 \in \{j_1, j_2\}.
\end{cases}
\end{equation}
holds. If \(y = x_1\) or \(y = x_2\), then from~\eqref{t2.13:e29.5} it follows that
\[
y = x_1 = x_1 * x = y * x
\]
or, respectively,
\[
y = x_2 = x_2 * x = y * x.
\]
Thus
\[
\Phi(y*x) = \Phi_1(y) = \Phi(\Phi^{-1}(X_{j_3}^2)) = X_{j_3}^2
\]
holds which implies~\eqref{t2.13:e25}.

Suppose now \(x_1 \neq y \neq x_2\). Since \((\mc{H}, *)\) is a band of semigroups with core \(\mc{C}\) and \(y \in \mc{C}\), we have \(x*y \in \mc{C}\). Moreover, by condition~\((ii_3)\), we have \(x*y \in E\). Consequently, \(x*y \in E \cap \mc{C}\) holds. If \(x*y \neq y\), then, using condition~\((ii_2)\) of Theorem~\ref{ch2:th6}, we obtain
\begin{equation}\label{t2.13:e30.1}
x*y = x*y^2 = (x * y) * y = \theta.
\end{equation}
Consequently, we have
\begin{equation}\label{t2.13:e31}
\Phi_1(x*y) = \Phi(x*y) = \Phi(\theta) = \varnothing.
\end{equation}
Using~\eqref{t2.13:e30} and \eqref{t2.13:e31} we obtain~\eqref{t2.13:e25}.

\emph{Case~\eqref{t2.13:e26.1}}. This case is completely similar to the previous one.

\emph{Case~\eqref{t2.13:e27}}. From \(y \in \mc{C} \setminus E\) and \eqref{t2.13:e29.4} it follows that there is a pair \(\<j_3, j_4> \in J^{2}\) such that
\begin{equation}\label{t2.13:e32}
\Phi_1(y) = X_{j_3} \times X_{j_4} \quad \text{and} \quad j_3 \neq j_4.
\end{equation}
Now we obtain
\begin{equation}\label{t2.13:e33}
\begin{aligned}
\Phi_1(x) \circ \Phi_1(y) &= (X_{j_1}^2 \cup X_{j_2}^2) \circ (X_{j_3} \times X_{j_4})\\
& = \begin{cases}
\varnothing, & \text{if } j_3 \notin \{j_1, j_2\}\\
X_{j_3} \times X_{j_4}, & \text{if } j_3 \in \{j_1, j_2\}
\end{cases}\\
&= \begin{cases}
\varnothing, & \text{if } j_3 \notin \{j_1, j_2\}\\
\Phi_1(y), & \text{if } j_3 \in \{j_1, j_2\}.
\end{cases}
\end{aligned}
\end{equation}
From condition~\((ii_4)\) of Theorem~\ref{ch2:th6}, it follows that \(j_3 \in \{j_1, j_2\}\) holds if and only if
\[
x_1 * y = y \quad \text{or} \quad x_2 * y = y.
\]
Suppose we have \(x_1 * y = y\). From \(x_1 = x_1 * x\) and \((ii_3)\) it follows that \(x * x_1 = x_1\). Consequently,
\[
x * y = x* (x_1 * y) = (x*x_1)*y = x_1*y = y.
\]
Thus,
\begin{equation}\label{t2.13:e34}
\Phi_1(x*y) = \Phi_1(y) = \Phi(y) = \Phi_1(x) \circ \Phi_1(y)
\end{equation}
holds if \(x_1 * y = y\). Analogously, we obtain~\eqref{t2.13:e34} if \(x_2*y=y\).

Suppose now \(j_3 \notin \{j_1, j_2\}\). Write
\begin{equation}\label{t2.13:e35}
x_3 := \Phi^{-1}(X_{j_3}^2).
\end{equation}
Then \(x_3\) is an idempotent element of \(\mc{C}\), and \(\theta \neq x_3\), and
\begin{equation}\label{t2.13:e36}
x_1 \neq x_3 \neq x_2.
\end{equation}
By condition~\((ii_3)\), we have \(x * x_3 \in E\) and, in addition, \(x * x_3 \in \mc{C}\) because \(\mc{C}\) is an ideal of \((\mc{H}, *)\). Since \(E\) is a commutative band, we obtain also
\[
x_3 * x \in \mc{C} \cap E.
\]
It is clear that
\begin{equation}\label{t2.13:e37}
(x_3*x)*x = x_3*(x*x) = x_3*x.
\end{equation}
Condition \((ii_4)\) and \eqref{t2.13:e36}--\eqref{t2.13:e37} imply that \(x_3*x\) is a trivial idempotent element of \(\mc{C}\). Since \((\mc{C}, *)\) and \((\mc{S}_{P \otimes P}, \circ)\) are isomorphic and \(|P| \geqslant 2\) holds, \(\mc{C}\) contains a zero \(\theta\), and it is the unique trivial idempotent element of \((\mc{C}, *)\). It follows directly from \eqref{t2.13:e35} and \eqref{t2.13:e32} that \(y = x_3*y\). Consequently,
\[
\Phi_{1}(x*y) = \Phi_{1}(x*x_3*y) = \Phi_{1}((x*x_3)*y) = \Phi_{1}(\theta*y) = \Phi_{1}(\theta) = \varnothing.
\]
Equality~\eqref{t2.13:e25} follows.

\emph{Case} \eqref{t2.13:e28}. This case is completely similar to~\eqref{t2.13:e27}.

\emph{Case} \eqref{t2.13:e29}. Using~\eqref{t2.13:e23} and~\eqref{t2.13:e29} we can find two distinct nontrivial \(y_1\), \(y_2 \in \mc{C} \cap E\) and, consequently, two distinct \(j_3\), \(j_4 \in J\) such that
\begin{equation}\label{t2.13:e38}
y_1 = y_1 * y, \quad y_2 = y_2 * y, \quad \{j_1, j_2\} \neq \{j_3, j_4\},
\end{equation}
and
\begin{equation}\label{t2.13:e39}
\Phi_{1}(y) = \Phi_{1}(y_1) \cup \Phi_{1}(y_2), \quad \Phi_{1}(y_1) = X_{j_3}^{2}, \quad \Phi_{1}(y_2) = X_{j_4}^{2}.
\end{equation}
It should be noted here that \(\{j_1, j_2\} \neq \{j_3, j_4\}\) holds because \(\Phi_{1}\) is injective and, by~\eqref{t2.13:e29}, we have \(x \neq y\). From~\eqref{t2.13:e39} it follows that \(\Phi_{1}(y) = X_{j_3}^{2} \cup X_{j_4}^{2}\). The last equality and \eqref{t2.13:e24} imply
\begin{multline}\label{t2.13:e40}
\Phi_{1}(x) \circ \Phi_{1}(y) = (X_{j_1}^{2} \cup X_{j_2}^{2}) \circ (X_{j_3}^{2} \cup X_{j_4}^{2})\\
= \begin{cases}
\varnothing, & \text{if } \{j_1, j_2\} \cap \{j_3, j_4\} = \varnothing\\
X_{j_0}^{2}, & \text{if \(j_0\) is a unique point of } \{j_1, j_2\} \cap \{j_3, j_4\}.
\end{cases}
\end{multline}
Let \(\{j_1, j_2\} \cap \{j_3, j_4\} \neq \varnothing\). Without loss of generality, we can set
\[
j_0 = j_1 = j_3,
\]
or, an equivalent form,
\begin{equation}\label{t2.13:e41}
x_1 = y_1 = y_0,
\end{equation}
where \(y_0 = \Phi^{-1}(X_{j_0}^2)\). Equality~\eqref{t2.13:e25} evidently holds if
\[
x*y = y_0.
\]
Let us prove the last equality. From~\eqref{t2.13:e29.5}, \eqref{t2.13:e38}, \eqref{t2.13:e39}, and \eqref{t2.13:e41} it follows that
\[
y_0 = y_0 * x \quad \text{and} \quad y_0 = y_0 * y.
\]
These equalities and condition~\((ii_3)\) imply
\[
y_0 = y_0^2 = (y_0*x)*(y_0*y) = (y_0*y_0)*(x*y) = y_0*(x*y).
\]
Hence, \(y_0 = y_0*(x*y)\) holds. Since \((\mc{H}, *)\) is a band of \(\mc{H}_{\alpha}\) with core \(\mc{C}\) and \(E\) is a commutative band, \(x*y\) is an idempotent element of \(\mc{C}\). Using condition~\((ii_2)\) of Theorem~\ref{ch2:th6}, we see that \(y_0 = y_0*(x*y)\) holds if and only if \(y_0 = \theta\) or \(y_0 = x* y\). Since \(\Phi(y_0) = X_0^2 \neq \varnothing\), the equality \(y_0 = x*y\) holds.

Suppose now that \(\{j_1, j_2\} \cap \{j_3, j_4\} = \varnothing\), i.e.,
\begin{equation}\label{t2.13:e42}
\{x_1, x_2\} \cap \{y_1, y_2\} = \varnothing.
\end{equation}
It suffices to show that \(x*y = \theta\). As above, we can prove the membership relation \(x*y \in E \cap \mc{C}\). Suppose that \(x*y \neq \theta\), i.e., \(z := x*y\) is a nontrivial idempotent element of \(\mc{C}\). It implies
\begin{equation}\label{t2.13:e43}
\theta \neq z = z * (x*y) = z^2 * (x*y) = (z*x) * (z*y).
\end{equation}
By condition~\((ii_2)\) of Theorem~\ref{ch2:th6}, from~\eqref{t2.13:e43} it follows that
\[
\theta \neq z = z*x \quad \text{and} \quad \theta \neq z = z*y.
\]
Consequently, we have \(z \in \{x_1, x_2\} \cap \{y_1, y_2\}\), contrary to~\eqref{t2.13:e42}.

Thus, \(\Phi_{1} \colon \mc{C} \cup E \to \mc{S}_{P \otimes P_{S}}\) is a monomorphism.

\emph{Isomorphism} \(\Phi_{S} \colon \mc{H} \to \mc{S}_{P \otimes P_{S}}\). Let \(x \in \mc{H} \setminus (\mc{C} \cup E)\). Then there is a unique \(\alpha \in \Omega\) such that \(x \in \mc{H}_{\alpha}\). Write \(e_x = e_{\alpha}\) for the identity element of \(\mc{H}_{\alpha}\). Then \(e_{x} \in E \setminus \mc{C}\) holds and, by~\eqref{t2.13:e24}, we have
\begin{equation}\label{t2.13:e44}
\Phi_{1}(e_{x}) = X_{i}^2 \cup X_{j}^2,
\end{equation}
where \(i = i(x)\) and \(j = j(x)\) are some distinct elements of \(J\). Let us define a mapping \(\Phi_{S} \colon \mc{H} \to \mc{S}_{P \otimes P_{S}}\) as
\begin{equation}\label{t2.13:e45}
\Phi_{S}(x) = \begin{cases}
\Phi_{1}(x), & \text{if } x \in E \cup \mc{C}\\
(X_{i} \times X_{j}) \cup (X_{j} \times X_{i}), & \text{if } x \in \mc{H} \setminus (E \cup \mc{C}),
\end{cases}
\end{equation}
where \(i = i(x)\) and \(j = j(x)\) are elements of \(J\) for which~\eqref{t2.13:e44} holds. The mapping \(\Phi_{S}\) is correctly defined because \(\Phi_{1}\) is a mapping from \(E \cup \mc{C}\) to \(\mc{S}_{P \otimes P_S}\), and \(\{\mc{H}_{\alpha} \colon \alpha \in \Omega\}\) is a partition of \(\mc{H} \setminus \mc{C}\), and every \(\mc{H}_{\alpha}\) is a group, and every group contains a unique identity element.

We claim that \(\Phi_{S}\) is a bijection. Indeed, as in the proof of~\eqref{t2.13:e24.2}, we can show that for any two distinct \(i\), \(j \in J\) there is \(\alpha \in \Omega\) such that the equality
\begin{equation}\label{t2.13:e47}
\Phi_{1}(e_{\alpha}) = X_i^2 \cup X_{j}^2
\end{equation}
holds. If \(x \neq e_{\alpha}\) and \(x \in \mc{H}_{\alpha}\) hold, then from~\eqref{t2.13:e45} and \eqref{t2.13:e47} we obtain
\[
\Phi_{S}(x) = (X_i \times X_j) \cup (X_j \times X_i).
\]
Since we have the equality
\begin{multline*}
\mc{S}_{P \otimes P_{S}} = \mc{S}_{P \otimes P} \cup \{(X_i \times X_j) \cup (X_j \times X_i) \colon i, j \in J, i \neq j\} \\
\cup \{X_i^2\cup X_j^2 \colon i, j \in J, i \neq j\},
\end{multline*}
equality~\eqref{t2.13:e24.1} implies that the mapping \(\Phi_{S}\) is surjective. Moreover, \(\{X_j \colon j \in J\}\) is a partition of \(X\),
\begin{multline*}
(X_{i_1}^2 \cup X_{j_1}^2 = X_{i_2}^2 \cup X_{j_2}^2) \Leftrightarrow  (\{i_1, j_1\} =  \{i_2, j_2\})\\
\Leftrightarrow ((X_{i_1} \times X_{j_1}) \cup (X_{j_1} \times X_{i_1}) = (X_{i_2} \times X_{j_2}) \cup (X_{j_2} \times X_{i_2}))
\end{multline*}
are valid for all two-point subsets \(\{i_1, j_1\}\) and \(\{i_2, j_2\}\) of \(J\). Hence, \(\Phi_{S}\) is injective and, consequently, bijective as was claimed above.

The bijection \(\Phi_{S} \colon \mc{H} \to \mc{S}_{P \otimes P_{S}}\) is an isomorphism if and only if
\begin{equation}\label{t2.13:e48}
\Phi_{S}(x*y) = \Phi_{S}(x) \circ \Phi_{S}(y)
\end{equation}
holds for all \(x\), \(y \in \mc{H}\).

Let us prove equality~\eqref{t2.13:e48}.

First of all we note that \eqref{t2.13:e48} is equivalent to equality~\eqref{t2.13:e25} if \(x\), \(y \in E \cup \mc{C}\). Moreover, if we have~\eqref{t2.13:e29.1}, then \eqref{t2.13:e48} can be proved similarly to \eqref{t2.13:e29.2}--\eqref{t2.13:e29.3}.

In what follows we assume that~\eqref{t2.13:e29.4} holds.

Suppose~\eqref{t2.13:e48} holds if
\begin{gather}
\label{t2.13:e49}
x \in \mc{C} \quad \text{and} \quad y \in \mc{H} \setminus (E \cup \mc{C})\\
\intertext{or if}
\label{t2.13:e50}
x \in \mc{H} \setminus (E \cup \mc{C})\quad \text{and} \quad y \in \mc{C}.
\end{gather}
Then equality~\eqref{t2.13:e48} holds for all \(x\), \(y \in \mc{H}\) if and only if it holds for all \(x\), \(y \in \mc{H} \setminus \mc{C}\). Let \(t_1\) and \(t_2\) be arbitrary points of \(\mc{H} \setminus \mc{C}\). Then there are \(\alpha_1\), \(\alpha_2 \in \Omega\) such that \(t_1 \in \mc{H}_{\alpha_1}\) and \(t_2 \in \mc{H}_{\alpha_2}\). Note that, for every \(\alpha \in \Omega\), the restriction \(\Phi_{S}|_{\mc{H}_{\alpha}} \colon \mc{H}_{\alpha} \to \Phi_{S}(\mc{H}_{\alpha})\) is an isomorphism. Consequently, if \(\alpha_1 = \alpha_2\), then we have the equality
\[
\Phi_{S}(t_1 *t_2) = \Phi_{S}(t_1) \circ \Phi_{S}(t_2).
\]
In particular, we have
\begin{equation}\label{t2.13:e50.1}
\Phi_{S}(t *t^2) = \Phi_{S}(t^2 *t) = \Phi_{S}(t) \circ \Phi_{S}(t^2) = \Phi_{S}(t^2) \circ \Phi_{S}(t)
\end{equation}
for every \(t \in \mc{H}_{\alpha}\) and every \(\alpha \in \Omega\).

\emph{Suppose} \(\alpha_1 \neq \alpha_2\). Since every \(\mc{H}_{\alpha}\) is a group of order \(2\), the equalities
\[
t_1 = t_1^3 \quad \text{and} \quad t_2 = t_2^3
\]
hold. Hence, we have
\begin{equation}\label{t2.13:e51}
\Phi_{S}(t_1 *t_2) = \Phi_{S}(t_1^3 *t_2^3) = \Phi_{S}(t_1 * (t_1^2 *t_2^2)* t_2).
\end{equation}
Since \(\mc{C}\) is a core of \(\mc{H}\), the condition \(\alpha_1 \neq \alpha_2\) implies \(t_1^3 *t_2^2 \in \mc{C}\), and \(t_1^2 *t_2^3 \in \mc{C}\), and \(t_1^2 *t_2^2 \in \mc{C}\). Consequently, from~\eqref{t2.13:e51} and our supposition it follows that
\begin{multline}\label{t2.13:e52}
\Phi_{S}(t_1 *t_2) = \Phi_{S}(t_1 * (t_1^2*t_2^2)) \circ \Phi_{S}(t_2) \\
= \Phi_{S}(t_1) \circ \Phi_{S}(t_1^2 *t_2^2) \circ \Phi_{S}(t_2).
\end{multline}
The elements \(t_1^2\), \(t_2^2\), and \(t_1^2 *t_2^2\) are idempotent and, by definition of \(\Phi_{S}\), we have
\[
\Phi_{1}|_E = \Phi_{S}|_E.
\]
Hence, using~\eqref{t2.13:e50.1}, \eqref{t2.13:e52}, and \eqref{t2.13:e25}, we obtain
\begin{multline*}
\Phi_{S}(t_1 *t_2) = (\Phi_{S}(t_1) \circ \Phi_{S}(t_1^2)) \circ (\Phi_{S}(t_2^2) \circ \Phi_{S}(t_2)) \\
= \Phi_{S}(t_1^3) \circ \Phi_{S}(t_2^3) = \Phi_{S}(t_1) \circ \Phi_{S}(t_2).
\end{multline*}
Consequently, it suffices to prove~\eqref{t2.13:e48} if \eqref{t2.13:e49}  or \eqref{t2.13:e50} holds. Notice now that instead of condition~\eqref{t2.13:e49}, we can use the stronger condition
\begin{equation}\label{t2.13:e52.1}
x \in \mc{C} \cap E \quad \text{and} \quad y \in \mc{H} \setminus (E \cup \mc{C}).
\end{equation}
Indeed, from~\((ii_4)\) of Theorem~\ref{ch2:th6} it follows that for every \(x \in \mc{C}\) there is a nontrivial idempotent \(e \in \mc{C}\) such that
\begin{equation}\label{t2.13:e53}
x = x * e.
\end{equation}
For \(x \in \mc{C}\) and \(y \in \mc{H} \setminus (E \cup \mc{C})\), equality~\eqref{t2.13:e53} implies
\begin{equation}\label{t2.13:e54}
\Phi_{S}(x * y) = \Phi_{S}(x * e * y) = \Phi_{S}((x * e)*(e * y)).
\end{equation}
Since \(x * e\) and \(e * y\) belong to \(\mc{C}\), we can rewrite~\eqref{t2.13:e54} as
\begin{equation}\label{t2.13:e55}
\Phi_{S}(x * y) = \Phi_{S}(x * e) \circ \Phi_{S}(e * y) = \Phi_{S}(x) \circ \Phi_{S}(e) \circ \Phi_{S}(e * y).
\end{equation}
It is clear that \(e \in \mc{C} \cap E\). Consequently, if \eqref{t2.13:e48} holds for all \(x\), \(y\) satisfying \eqref{t2.13:e52.1}, then \eqref{t2.13:e55} and \eqref{t2.13:e53} imply
\begin{multline*}
\Phi_{S}(x * y) = \Phi_{S}(x) \circ \Phi_{S}(e) \circ \Phi_{S}(e) \circ \Phi_{S}(y) \\
= \Phi_{S}(x * e* e) \circ \Phi_{S}(y) = \Phi_{S}(x * e) \circ \Phi_{S}(y) = \Phi_{S}(x) \circ \Phi_{S}(y).
\end{multline*}
Similarly, instead of \eqref{t2.13:e50} we may use the condition
\begin{equation}\label{t2.13:e56}
x \in \mc{H} \setminus (E \cup \mc{C}) \quad \text{and} \quad y \in \mc{C} \cap E.
\end{equation}

Let \eqref{t2.13:e52.1} hold. Then, using \eqref{t2.13:e45} and \eqref{t2.13:e29.4}, we can find \(i\), \(j\), \(k \in J\) such that \(i \neq j\) and
\begin{equation}\label{t2.13:e57}
\Phi_{S}(x) = X_k^{2} \quad \text{and} \quad \Phi_{S}(y) = (X_i \times X_j) \cup (X_j \times X_i).
\end{equation}
From \eqref{t2.13:e57} it follows that
\begin{equation}\label{t2.13:e58}
\Phi_{S}(x) \circ \Phi_{S}(y) = \begin{cases}
\varnothing, & \text{if } k \notin \{i, j\}\\
X_j \times X_i, & \text{if } k = j\\
X_i \times X_j, & \text{if } k = i.
\end{cases}
\end{equation}
If \(k \notin \{i, j\}\), then, using the equality \(y^3 = y\) and Lemma~\ref{l2.18} as in~\eqref{t2.13:e30.1}, we obtain
\[
x *y = (x * y^2)*y = \theta *y = \theta,
\]
and, consequently,
\begin{equation}\label{t2.13:e59}
\Phi_{S}(x*y) = \Phi_{S}(\theta) = \varnothing.
\end{equation}
Now~\eqref{t2.13:e48} follows from \eqref{t2.13:e58} and \eqref{t2.13:e59}.

Let \(k = j\) hold. Write
\[
e_j := \Phi_{1}^{-1}(X_j^2) \quad \text{and} \quad e_i := \Phi_{1}^{-1}(X_i^2).
\]
From the definition of \(\Phi_{1}\) we obtain the equalities
\[
e_j = \Phi^{-1}(X_j^2) \quad \text{and} \quad e_i = \Phi^{-1}(X_i^2)
\]
and, using condition~\((ii_4)\) of Theorem~\ref{ch2:th6}, prove that
\begin{equation}\label{t2.13:e60}
z = \Phi_{S}^{-1}(X_j \times X_i)
\end{equation}
holds if and only if we have \(z \in \mc{C} \setminus \{\theta\}\) and
\begin{equation}\label{t2.13:e61}
z = e_j * z * e_i.
\end{equation}
Consequently, \eqref{t2.13:e48} holds if and only if
\begin{equation}\label{t2.13:e62}
x * y = e_j *(x*y) * e_i
\end{equation}
and \(x * y \in \mc{C} \setminus \{\theta\}\). Suppose
\begin{equation}\label{t2.13:e63}
x * y = \theta
\end{equation}
holds. From \(k=j\), and \eqref{t2.13:e45}, and \eqref{t2.13:e40} it follows that \(x*y^2 = x\). Now using \eqref{t2.13:e63} and Lemma~\ref{l2.18}, we obtain
\[
x = (x*y)*y = \theta * y = \theta.
\]
Hence, \(x = \theta\) that contradicts \eqref{t2.13:e29.4}. Consequently, we have \(x * y\neq \theta\). By condition~\((ii_5)\), from \(x * y\neq \theta\) and \(x \in E \cap \mc{C}\) and \(y \in \mc{H} \setminus (E \cup \mc{C})\) it follows that \(x * y \notin E\). Moreover, \(x * y \in \mc{C}\) holds because \(x \in \mc{C}\) and \(\mc{C}\) is a core of \(\mc{H}\). Consequently, the membership relation
\begin{equation}\label{t2.13:e64}
x * y \in \mc{C}\setminus E
\end{equation}
holds. By condition~\((ii_1)\), \(\mc{C}\) belongs to \(\mathbf{H}_1\). Now, using conditions~\((ii_2)\) and \((ii_4)\) of Theorem~\ref{ch2:th6}, we obtain that there is a unique pair \(i_l\), \(i_r\) of \emph{distinct} nontrivial idempotent elements of \(\mc{C}\) such that
\begin{equation}\label{t2.13:e65}
x * y = i_l * (x*y)*i_r.
\end{equation}
Since \(x\) is also a nontrivial idempotent element of \(\mc{C}\), condition~\((ii_2)\) of Theorem~\ref{ch2:th6} implies \(i_l = x = \Phi^{-1}(X_j^2) = e_j\). Suppose \(i_r \neq e_i\). Then, using the definitions of \(\Phi_{S}\) and \(\Phi_{1}\), we obtain \(y^2 * i_r = \theta\). The last equality and \eqref{t2.13:e65} imply
\[
x * y = i_l * (x*y)*i_r = i_l * (x*y*y^2)*i_r = i_l * (x*y)*\theta = \theta,
\]
that \(x * y \neq \theta\). Consequently, \(i_r = e_i\) holds. Equality~\eqref{t2.13:e62} follows from \eqref{t2.13:e65}.

The case when condition~\eqref{t2.13:e56} holds can be analyzed similarly.

The proof of the theorem is completed.
\end{proof}

\begin{figure}[ht]
\[
\begin{array}{|c|c|c|c|c|c|c|c|}
\hline
            & \varnothing & xy          & yx          &         x^2 & y^2         & xy + yx
&   x^2 + y^2\\ \hline
\varnothing & \varnothing & \varnothing & \varnothing & \varnothing & \varnothing & \varnothing
&\varnothing \\ \hline
xy          & \varnothing & \varnothing & x^2         & \varnothing & xy          & xy
&xy          \\ \hline
yx          & \varnothing & y^2         & \varnothing & yx          & \varnothing & yx
&yx          \\ \hline
x^2         & \varnothing & xy          & \varnothing & x^2         & \varnothing & x^2
& x^2        \\ \hline
y^2         & \varnothing & \varnothing & yx          & \varnothing &  y^2         & y^2
& y^2        \\ \hline
xy + yx     & \varnothing & xy          & yx          & x^2         &  y^2         & x^2 + y^2
& xy + yx    \\ \hline
x^2 + y^2   & \varnothing & xy          & yx          & x^2         &  y^2         & xy + yx
& x^2 + y^2   \\ \hline
\end{array}
\]
\caption{The Cayley table of the disjoint union of the semigroups \(\mathcal{C} = \{\varnothing, xy, yx, x^2, y^2\}\) and \(G = \{xy + yx, x^2 + y^2\}\) with \(xy = X \times Y\), \(yx = Y \times X\), \(x^2 = X^{2}\), \(y^{2} = Y^{2}\), \(xy + yx = (X \times Y) \cup (Y \times X)\), \(x^2 + y^2 = X^2 \cup Y^2\) and \(X \cap Y = \varnothing\). Here \(\mc{C}\) is isomorphic to \(\mc{S}_{P \otimes P}\) with \(|P| = 2\), and \(G\) is a group of order \(2\), and every element of \(G\) is a two-sided identity for elements of \(\mathcal{C}\).}
\label{f2.1}
\end{figure}

\begin{remark}\label{r2.22}
Considering the semigroup \((\mc{H}, *)\) from Example~\ref{ex2.19} such that \(\mc{C} \in \mathbf{H}_{1}\) and every \(\mc{H}_{\alpha}\), \(\alpha \in \Omega\), is a group of order \(2\), we see that \(\mc{H}\) is a band of semigroups with core \(\mc{C}\) and condition~\((ii_5)\) of Theorem~\ref{t2.13} is trivially holds. Moreover, since \(E(\mc{C})\) is a commutative band, the definition of \((\mc{H}, *)\) (see \eqref{ex2.19:e1}) implies that \(E(\mc{H})\) is also a commutative band. Consequently. even if we have
\begin{equation}\label{r2.22:e1}
\mc{H} \approx \{\mc{H}_{\alpha}\colon \alpha \in \Omega\} \sqcup \{\mc{C}\},
\end{equation}
conditions \((ii_1)\), \((ii_2)\), \((ii_3)\), and \((ii_5)\) do not imply condition \((ii_4)\). Analogously, using Example~\ref{ex2.20} we can define \((\mc{H}, *)\) such that~\eqref{r2.22:e1} holds, conditions \((ii_1) - (ii_4)\) are satisfied but \((ii_5)\) is false (see Figure~\ref{f2.1} for the Caley table of corresponding \((\mc{H}, *)\)).
\end{remark}

Let us denote by \(\mathbf{H}_S\) the class of all semigroups \((\mc{H}, *)\) satisfying condition~\((ii)\) of Theorem~\ref{t2.13}.

\begin{corollary}\label{c4.9}
Every semigroup \((\mc{H}, *) \in \mathbf{H}_S\) admits a \(d\)-transitive monomorphism \(\mc{H} \to \mc{B}_{X}\) for a suitable set \(X\).
\end{corollary}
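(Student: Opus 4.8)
The plan is to argue exactly as in the proof of Corollary~\ref{c3.6}, using Theorem~\ref{t2.13} in place of Theorem~\ref{ch2:th6} and the partition $P\otimes P_S$ in place of $P\otimes P$. First I would invoke Theorem~\ref{t2.13}: since $(\mc{H},*)\in\mathbf{H}_S$ satisfies condition~\ref{t2.13:s2}, there are a nonempty set $X$ and a partition $P\in\mathbf{\Pi}(X)$ with $|P|\geqslant 2$ such that $(\mc{H},*)$ is isomorphic to $(\mc{S}_{P\otimes P_S},\circ)$; fix such an isomorphism $\Phi\colon\mc{H}\to\mc{S}_{P\otimes P_S}$.

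Next I would check that the inclusion map $\Id\colon\mc{S}_{P\otimes P_S}\to\mc{B}_X$, $\Id(s)=s$, is a $d$-transitive monomorphism. It is evidently an injective homomorphism. By the notational convention for $\mc{S}_{A}$, the set $P\otimes P_S$ is a set of generators of $\mc{S}_{P\otimes P_S}$, and by Proposition~\ref{p1.16} it is a (symmetric) partition of $X^{2}$. It remains to deal with the zero: since $|P|\geqslant 2$, the argument in the proof of Theorem~\ref{t2.13} shows that $\mc{S}_{P\otimes P}$ is a proper ideal of $\mc{S}_{P\otimes P_S}$, and by Lemma~\ref{l3.3} it contains the empty relation $\varnothing$, which is the zero of $(\mc{B}_X,\circ)$ and hence of $(\mc{S}_{P\otimes P_S},\circ)$; so whenever $\mc{S}_{P\otimes P_S}$ has a zero element it equals $\varnothing=\Id(\varnothing)$, as required by Definition~\ref{d1.1}. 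Therefore $\Id$ is a $d$-transitive monomorphism, and so is the composition
\[
\mc{H}\xrightarrow{\ \Phi\ }\mc{S}_{P\otimes P_S}\xrightarrow{\ \Id\ }\mc{B}_X,
\]
which proves the corollary.

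I do not expect any genuine obstacle: the whole content has already been established, and the only points requiring mild care are that $P\otimes P_S$ is an honest partition of $X^{2}$ (not merely a cover) and that it generates $\mc{S}_{P\otimes P_S}$ — both built into Proposition~\ref{p1.16} and the definition of $\mc{S}_{A}$ — together with the verification that the zero maps to $\varnothing$, which is precisely where the hypothesis $|P|\geqslant 2$ furnished by Theorem~\ref{t2.13} and Lemma~\ref{l3.3} are used.
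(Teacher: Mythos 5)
Your proof is correct and is exactly the argument the paper intends: the corollary is left unproved in the text precisely because it is the verbatim analogue of the proof of Corollary~\ref{c3.6}, with Theorem~\ref{t2.13} supplying the isomorphism \(\mc{H}\cong\mc{S}_{P\otimes P_S}\) with \(|P|\geqslant 2\), the generating set \(P\otimes P_S\) being a partition of \(X^{2}\) by construction, and the zero going to \(\varnothing\). Your verification that the zero of \(\mc{S}_{P\otimes P_S}\) is \(\varnothing\) (via \(\varnothing\in\mc{S}_{P\otimes P}\subseteq\mc{S}_{P\otimes P_S}\) when \(|P|\geqslant 2\)) is the only point needing care, and you handle it correctly.
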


Recall that, for every semigroup \((\mc{S}, \circ)\), we denote by \((\mc{S}^{1}, \circ)\) a semigroup obtained from \((\mc{S}, \circ)\) by adjunction of an identity element (see~\eqref{e2.17}).

\begin{theorem}\label{t2.20}
Let \((L, \cdot)\) be a nonempty semigroup. The following statements are equivalent.
\begin{enumerate}
\item\label{t2.20:s1} There are a set \(X\) and a partition \(P\) of \(X\) such that the semigroups \((\mc{S}_{P \otimes P_S^1}, \circ)\) and \((L, \cdot)\) are isomorphic and \(|P| \geqslant 2\).
\item\label{t2.20:s2} There is a semigroup \((\mc{H}, *) \in \mathbf{H}_S\) such that \((L, \cdot)\) and \((\mc{H}^{1}, *)\) are isomorphic.
\end{enumerate}
\end{theorem}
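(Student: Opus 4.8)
The plan is to reduce Theorem~\ref{t2.20} to a statement parallel to Theorem~\ref{th2.12}: namely, that for every nonempty \(X\) and every \(P = \{X_j \colon j \in J\} \in \mathbf{\Pi}(X)\) with \(|P| \geqslant 2\), the semigroup \((\mc{S}_{P\otimes P_S^1}, \circ)\) is obtained from \((\mc{S}_{P\otimes P_S}, \circ)\) by adjunction of an identity element, the adjoined element being \(R_P = \bigcup_{j \in J} X_j^{2}\). Granting this reduction, the theorem follows by combining it with Theorem~\ref{t2.13}. Indeed, statement~\ref{t2.20:s1} asserts that \(L \cong \mc{S}_{P\otimes P_S^1}\) for some \(X\) and some \(P\) with \(|P| \geqslant 2\); by the reduction this is equivalent to \(L \cong (\mc{S}_{P\otimes P_S})^{1}\) for such \(X\), \(P\); by Theorem~\ref{t2.13} the semigroups \(\mc{S}_{P\otimes P_S}\), taken over all nonempty \(X\) and all \(P\) with \(|P| \geqslant 2\), are precisely, up to isomorphism, the members of \(\mathbf{H}_S\); hence the condition is equivalent to the existence of \((\mc{H}, *) \in \mathbf{H}_S\) with \(L \cong \mc{H}^{1}\), which is statement~\ref{t2.20:s2}.

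To carry out the reduction I would argue as in the proof of Theorem~\ref{th2.12}. Write \(a_{i,j} := (X_i \times X_j) \cup (X_j \times X_i)\). First I would check, using formula~\eqref{ch2:th6:e4} and the composition rules for the \(a_{i,j}\), that \(R_P \circ R_P = R_P\) and that \(R_P\) is a two-sided identity for every element of \(\mc{S}_{P\otimes P_S}\) (it fixes each block \(X_k \times X_l\) of \(P\otimes P\), each \(a_{i,j}\), each \(a_{i,j}^{2} = X_i^{2} \cup X_j^{2}\), and \(\varnothing\)), and that \(R_P \notin \mc{S}_{P\otimes P_S}\) whenever \(|P| \geqslant 3\): by the description of \(\mc{S}_{P\otimes P_S}\) obtained in Theorem~\ref{t2.13} as a band of the two-element groups \(\langle\langle a_{i,j} \rangle\rangle = \{a_{i,j}, a_{i,j}^{2}\}\) with core \(\mc{S}_{P\otimes P}\), the only elements of \(\mc{S}_{P\otimes P_S}\) that are unions of ``diagonal'' squares \(X_j^{2}\) are the \(a_{i,j}^{2}\), each a union of at most two such squares, whereas \(R_P\) is a union of at least three. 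Second I would show that the set \(P\otimes P_S^{-} := \{a_{i,j} \colon i \neq j\}\) of ``off-diagonal'' blocks already generates \(\mc{S}_{P\otimes P_S}\) when \(|P| \geqslant 3\): from \(a_{i,j} \circ a_{i,k} = X_j \times X_k\) for pairwise distinct \(i, j, k\) one recovers every block \(X_j \times X_k\) of \(P\otimes P\), hence (via \((X_j \times X_k) \circ (X_k \times X_j) = X_j^{2}\) and compositions yielding \(\varnothing\)) all of \(\mc{S}_{P\otimes P}\), while the \(a_{i,j}\) and the \(a_{i,j}^{2}\) are themselves products of the \(a\)'s. Since \(P\otimes P_S^{1} = \{R_P\} \cup P\otimes P_S^{-}\), it follows that \(\mc{S}_{P\otimes P_S^1} = \mc{S}_{P\otimes P_S} \cup \{R_P\}\) with \(R_P\) an adjoined identity, i.e. \(\mc{S}_{P\otimes P_S^1} \cong (\mc{S}_{P\otimes P_S})^{1}\).

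The hard part will be the boundary case \(|P| = 2\), and this is where I expect the real difficulty to sit. There \(R_P = X_1^{2} \cup X_2^{2} = a_{1,2}^{2}\) already lies in \(\mc{S}_{P\otimes P_S}\) and is in fact its identity element, while \(P\otimes P_S^{1} = \{R_P, a_{1,2}\}\), so \(\mc{S}_{P\otimes P_S^1}\) collapses to the cyclic group of order \(2\) — strictly smaller than \(\mc{S}_{P\otimes P_S}\), and not its adjoined-identity extension. Thus both the generation step and the non-membership \(R_P \notin \mc{S}_{P\otimes P_S}\) break down at \(|P| = 2\), and this case must be treated directly: one records that \(\mc{S}_{P\otimes P_S^1}\) is then the two-element group and checks separately whether, and how, it is accommodated by condition~\ref{t2.20:s2} (via a suitable \(\mc{H} \in \mathbf{H}_S\) with \(\mc{H}^{1}\) of order two), reconciling the boundary behaviour with the description of \(\mathbf{H}_S\) furnished by Theorem~\ref{t2.13}; alternatively the statement is to be understood with \(|P| \geqslant 3\). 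For \(|P| \geqslant 3\) the scheme above is routine once the composition identities for the \(a_{i,j}\) are written out.
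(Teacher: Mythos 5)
Your reduction for \(|P| \geqslant 3\) is correct and is exactly the route the paper intends (the proof is omitted there as being ``similar to the proof of Theorem~\ref{th2.12}''): the off-diagonal blocks \(a_{i,j}\) generate \(\mc{S}_{P \otimes P_S}\) via \(a_{i,j} \circ a_{i,k} = X_j \times X_k\), the relation \(R_P\) is a two-sided identity for \(\mc{S}_{P \otimes P_S}\) which, being a union of at least three diagonal squares, does not lie in \(\mc{S}_{P \otimes P_S}\), and Theorem~\ref{t2.13} then converts the existence of such an \(X\) and \(P\) into condition~\ref{t2.20:s2}.

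However, you should not leave the case \(|P| = 2\) as something ``to be checked''; your own computation decides it, and it decides it against the theorem as stated. For \(|P| = 2\) the semigroup \(\mc{S}_{P \otimes P_S^{1}}\) is the group of order \(2\), and no \((\mc{H}, *) \in \mathbf{H}_S\) has \(\mc{H}^{1}\) of order \(2\): condition~\ref{t2.13:s2} of Theorem~\ref{t2.13} forces \(\Omega \neq \varnothing\), so by \((ii_2)\) and \((ii_4)\) the core \(\mc{C} \in \mathbf{H}_1\) has at least two nontrivial idempotents, whence \(|\mc{C}| \geqslant 5\) by Corollary~\ref{c2.8} and \(|\mc{H}| \geqslant 7\), so \(|\mc{H}^{1}| \geqslant 7\). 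Thus \ref{t2.20:s1}\(\Rightarrow\)\ref{t2.20:s2} fails when \(L\) is the two-element group. The converse fails too: take \(\mc{H} \cong \mc{S}_{P \otimes P_S}\) with \(|P| = 2\), the unique member of \(\mathbf{H}_S\) possessing an identity (Remark~\ref{r4.12}); then \(\mc{H}^{1} = \mc{H}\) has \(7\) elements, whereas for a partition \(Q\) of a set \(Y\) the cardinality \(|\mc{S}_{Q \otimes Q_S^{1}}|\) equals \(2\) if \(|Q| = 2\) and is at least \(17\) if \(|Q| \geqslant 3\), so it never equals \(7\). The statement therefore needs repairing at the boundary: it becomes correct if \ref{t2.20:s1} requires \(|P| \geqslant 3\) and \ref{t2.20:s2} additionally requires \(\mc{H}\) to have no identity element (equivalently \(|\mc{H}| \neq 7\)); with that adjustment your argument goes through verbatim.
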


The proof of this theorem is similar to the proof of Theorem~\ref{th2.12} and we omit it here.

The following corollary can be proved similarly to Corollary~\ref{c3.6}.

\begin{corollary}\label{c4.11}
Let \((\mc{H}, *)\) belong to \(\mathbf{H}_S\). Then \((\mc{H}^{1}, *)\) admits a \(d\)-transitive monomorphism \(\mc{H}^{1} \to \mc{B}_{X}\) for a suitable set \(X\).
\end{corollary}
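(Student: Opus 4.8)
The plan is to reproduce the argument of Corollary~\ref{c3.6} almost word for word, with Theorem~\ref{ch2:th6} and Lemma~\ref{l3.3} replaced by Theorem~\ref{t2.20} and Proposition~\ref{p1.19}. First I would apply Theorem~\ref{t2.20} to the semigroup $L := \mc{H}^{1}$: statement~\ref{t2.20:s2} of that theorem holds with the given $\mc{H} \in \mathbf{H}_S$ itself, so statement~\ref{t2.20:s1} yields a nonempty set $X$ and a partition $P$ of $X$ with $|P| \geqslant 2$ and an isomorphism $\Psi \colon \mc{H}^{1} \to \mc{S}_{P \otimes P_S^{1}}$.

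Next I would verify that the inclusion mapping $\Id \colon \mc{S}_{P \otimes P_S^{1}} \to \mc{B}_X$, $\Id(s) = s$, is a $d$-transitive monomorphism. By the definition of $\mc{S}_{P \otimes P_S^{1}}$, the set $A := P \otimes P_S^{1}$ is a set of generators of $(\mc{S}_{P \otimes P_S^{1}}, \circ)$, and, by Proposition~\ref{p1.19} together with~\eqref{e1.7}, the set $\{\Id(a) \colon a \in A\} = P \otimes P_S^{1}$ is a partition of $X^{2}$. For the zero element it suffices to note that if $(\mc{S}_{P \otimes P_S^{1}}, \circ)$ has a zero, then it must coincide with the empty relation $\varnothing$: the zero of a semigroup is unique, $\varnothing$ absorbs every binary relation under $\circ$, and a short computation with the composition rules shows that $\varnothing \in \mc{S}_{P \otimes P_S^{1}}$ exactly when $|P| \geqslant 3$ (while for $|P| = 2$ the semigroup $\mc{S}_{P \otimes P_S^{1}}$ carries no zero at all). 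Since $\Id(\varnothing) = \varnothing$, Definition~\ref{d1.1} shows that $\Id$ is a $d$-transitive monomorphism.

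Finally, the composition $\mc{H}^{1} \xrightarrow{\Psi} \mc{S}_{P \otimes P_S^{1}} \xrightarrow{\Id} \mc{B}_X$ is a monomorphism, the set $\Psi^{-1}(A)$ is a set of generators of $\mc{H}^{1}$, its image under the composition equals $A = P \otimes P_S^{1}$, which is a partition of $X^{2}$, and an isomorphism carries the zero of $\mc{H}^{1}$, if it has one, to the zero $\varnothing$ of $\mc{S}_{P \otimes P_S^{1}}$. Hence this composition is the required $d$-transitive monomorphism $\mc{H}^{1} \to \mc{B}_X$. I do not anticipate any serious obstacle: once Theorem~\ref{t2.20} is available the proof is a verbatim analogue of the one for Corollary~\ref{c3.6}, the only mildly delicate point being the bookkeeping that pins down the zero of $\mc{S}_{P \otimes P_S^{1}}$ as $\varnothing$.
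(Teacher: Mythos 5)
Your proof is correct and takes essentially the same route as the paper, which omits the argument entirely and says only that the corollary ``can be proved similarly to Corollary~\ref{c3.6}'': you apply Theorem~\ref{t2.20} to obtain an isomorphism \(\mc{H}^{1} \to \mc{S}_{P \otimes P_S^{1}}\) with \(|P| \geqslant 2\) and compose it with the identity embedding into \(\mc{B}_{X}\), exactly the intended analogue. Your additional bookkeeping about the zero --- that \(\varnothing\) is the zero of \(\mc{S}_{P \otimes P_S^{1}}\) precisely when \(|P| \geqslant 3\), while for \(|P| = 2\) the semigroup is a group of order \(2\) with no zero, so the zero clause of Definition~\ref{d1.1} is either satisfied or vacuous --- is a detail the paper leaves implicit, and you handle it correctly.
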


\begin{remark}\label{r4.12}
A semigroup \((\mc{H}, *) \in \mathbf{H}_S\) has an identity element if and only if \(|\mc{H}| = 7\). (The last equality holds if and only if \((\mc{H}, *)\) is isomorphic to \((\mc{S}_{P \otimes P_S}, \circ)\) with \(|P| = 2\).)
\end{remark}

\begin{example}\label{ex2.21}
Let \(P = \{X_0, X_1, X_2\}\) be the trichotomy of the set \(X = [0,1]\) defined in Example~\ref{ch2:ex11}. Then the set
\[
E = \{\varnothing\} \cup \{R_P\} \cup \{X_0^2, X_1^2, X_2^2\} \cup \{X_0^2 \cup X_1^2, X_0^2 \cup X_2^2, X_1^2 \cup X_2^2\}
\]
is the band of all idempotents of \((\mc{S}_{P \otimes P_{S}^{1}}, \circ)\). This band is commutative and it is a lattice with respect to the partial order \(\leqslant\) defined by~\eqref{e2.21}. A colored Hasse diagram of \((E, \leqslant)\) is plotted in Figure~\ref{fig2.3}.
\end{example}

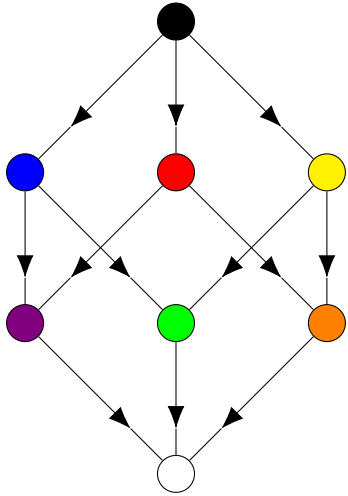
\begin{figure}[ht]
\begin{tikzpicture}[place/.style={circle,minimum size=14pt,inner sep=0pt}, node distance=1.5cm, vec/.style={-{Latex[length=3mm]}}]

\node (A1)  [place,draw=black,fill=black] {};
\node (A22) [place,draw=black,fill=red,below=of A1] {};
\node (A21) [place,draw=black,fill=blue,left=of A22] {};
\node (A23) [place,draw=black,fill=yellow,right=of A22] {};
\draw [vec] (A1) -- ($0.3*(A1)+0.7*(A21)$); \draw ($0.3*(A1)+0.7*(A21)$) -- (A21);
\draw [vec] (A1) -- ($0.3*(A1)+0.7*(A22)$); \draw ($0.3*(A1)+0.7*(A22)$) -- (A22);
\draw [vec] (A1) -- ($0.3*(A1)+0.7*(A23)$); \draw ($0.3*(A1)+0.7*(A23)$) -- (A23);

\node (A31) [place,draw=black,fill=violet,below=of A21] {};
\node (A32) [place,draw=black,fill=green,below=of A22] {};
\node (A33) [place,draw=black,fill=orange,below=of A23] {};
\node (A4)  [place,draw=black,fill=white,below=of A32] {};

\draw [vec] (A21) -- ($0.3*(A21)+0.7*(A31)$); \draw ($0.3*(A21)+0.7*(A31)$) -- (A31);
\draw [vec] (A21) -- ($0.3*(A21)+0.7*(A32)$); \draw ($0.3*(A21)+0.7*(A32)$) -- (A32);
\draw [vec] (A22) -- ($0.3*(A22)+0.7*(A31)$); \draw ($0.3*(A22)+0.7*(A31)$) -- (A31);
\draw [vec] (A22) -- ($0.3*(A22)+0.7*(A33)$); \draw ($0.3*(A22)+0.7*(A33)$) -- (A33);
\draw [vec] (A23) -- ($0.3*(A23)+0.7*(A32)$); \draw ($0.3*(A23)+0.7*(A32)$) -- (A32);
\draw [vec] (A23) -- ($0.3*(A23)+0.7*(A33)$); \draw ($0.3*(A23)+0.7*(A33)$) -- (A33);
\draw [vec] (A31) -- ($0.3*(A31)+0.7*(A4)$); \draw ($0.3*(A31)+0.7*(A4)$) -- (A4);
\draw [vec] (A32) -- ($0.3*(A32)+0.7*(A4)$); \draw ($0.3*(A32)+0.7*(A4)$) -- (A4);
\draw [vec] (A33) -- ($0.3*(A33)+0.7*(A4)$); \draw ($0.3*(A33)+0.7*(A4)$) -- (A4);
\end{tikzpicture}
\caption{\(R_P\) is white, \(X_0 \times X_0\) is red, \(X_1 \times X_1\) is yellow, \(X_2 \times X_2\) is blue, \(X_0^2 \cup X_1^2\) is orange, \(X_0^2 \cup X_2^2\) is green, \(X_1^2 \cup X_2^2\) is violet, and \(\varnothing\) is black.}
\label{fig2.3}
\end{figure}

Let \(X\) be a set, let \(R\) be a nonempty binary relation on \(X\). Write \(X_1\) and \(X_2\) for the \emph{domain} and, respectively, for the \emph{range} of the relation \(R\), i.e., a point \(x\) belongs to \(X_1\) (\(X_2\)) if and only if there is \(x_2 \in X\) (\(x_1 \in X\)) such that \(\<x, x_2> \in R\) (\(\<x_1, x> \in R\)).

\begin{lemma}\label{l4.13}
Let \(R\) be a binary relation with a domain \(X_1\) and a range \(X_2\). The equality \(R \circ R = \varnothing\) holds if and only if \(X_1 \cap X_2 = \varnothing\).
\end{lemma}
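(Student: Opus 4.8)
The plan is to prove both implications by contraposition, unwinding only the definitions of composition, domain, and range; no auxiliary results are needed here beyond the definition of $\circ$ recalled in the introduction.

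First I would establish the implication $\bigl(X_1 \cap X_2 \neq \varnothing\bigr) \Rightarrow \bigl(R \circ R \neq \varnothing\bigr)$. Pick a point $z \in X_1 \cap X_2$. Membership $z \in X_2$ (the range) gives some $x \in X$ with $\langle x, z \rangle \in R$, and membership $z \in X_1$ (the domain) gives some $y \in X$ with $\langle z, y \rangle \in R$. By the definition of composition, the existence of such an intermediate point $z$ yields $\langle x, y \rangle \in R \circ R$, so $R \circ R \neq \varnothing$. This proves the ``if'' part of the lemma.

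Next I would prove the converse by a symmetric argument: assuming $R \circ R \neq \varnothing$, choose $\langle x, y \rangle \in R \circ R$ and a witness $z \in X$ with $\langle x, z \rangle \in R$ and $\langle z, y \rangle \in R$. The first relation shows $z$ lies in the range $X_2$, the second shows $z$ lies in the domain $X_1$, hence $z \in X_1 \cap X_2$ and this intersection is nonempty. Combining the two contrapositives gives the stated equivalence.

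There is essentially no obstacle in this argument — the only point worth stating carefully is that the intermediate point $z$ appearing in the definition of $\psi \circ \gamma$ is simultaneously a ``target'' of the first factor and a ``source'' of the second, which is exactly what links it to the range and the domain of $R$; once this observation is made, both directions are a one-line application of the definitions. (I would also note in passing that the hypothesis ``$R$ nonempty'' is not actually needed: if $R = \varnothing$ then $X_1 = X_2 = \varnothing$ and $R \circ R = \varnothing$, so the equivalence holds trivially.)
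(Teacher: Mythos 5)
Your proof is correct and is exactly the argument the paper has in mind: the paper simply states that the lemma ``follows directly from the definition of the composition $\circ$ of binary relations,'' and your two contrapositive implications spell out precisely that unwinding of the definitions of composition, domain, and range. Nothing further is needed.
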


\begin{proof}
It follows directly from the definition of the composition \(\circ\) of binary relations.
\end{proof}

\begin{proposition}\label{p4.14}
Let \(Y\) be a set with \(|Y| \geqslant 3\) and let \(Q = \{\Delta_{Y}, \nabla_{Y}\}\), where \(\Delta_{Y}\) is the diagonal of \(Y\) and \(\nabla_{Y} = Y^{2} \setminus \Delta_{Y}\). Then \(Q\) is a partition of \(Y^{2}\) and the subsemigroup \((\mc{S}_{Q}, \circ)\) of \(\mc{B}_{Y}\) has no \(d\)-transitive representations \(\mc{S}_{Q} \to \mc{B}_X\) for any set \(X\).
\end{proposition}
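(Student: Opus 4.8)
The plan is to make \(\mc{S}_Q\) completely explicit, then to show that a \(d\)-transitive monomorphism would be forced to use one specific two-element generating set, and finally to reach a contradiction from a short composition computation. So first I would compute \(\mc{S}_Q\). The relation \(\Delta_Y\) is the identity of \(\mc{B}_Y\); and since \(|Y|\geqslant 3\), for every \(\langle x,y\rangle\in Y^{2}\) one can choose \(z\in Y\) with \(z\neq x\) and \(z\neq y\) (when \(x=y\) the single condition \(z\neq x\) suffices, using \(|Y|\geqslant 2\)), so \(\nabla_Y\circ\nabla_Y=Y^{2}\); a direct check using \(|Y|\geqslant 2\) also gives \(Y^{2}\circ R=R\circ Y^{2}=Y^{2}\) for every \(R\in\{\Delta_Y,\nabla_Y,Y^{2}\}\). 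Hence \(\mc{S}_Q=\{\Delta_Y,\nabla_Y,Y^{2}\}\) is the three-element semigroup with identity \(e:=\Delta_Y\), zero \(\theta:=Y^{2}\), and a third element \(a:=\nabla_Y\) satisfying \(a\circ a=\theta\) and \(a\circ e=e\circ a=a\). (This is the only place \(|Y|\geqslant 3\) is used: for \(|Y|=2\) one gets \(\nabla_Y\circ\nabla_Y=\Delta_Y\) instead, and \(\mc{S}_Q\) would be a group of order \(2\).)

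Next I would determine the generating set. Suppose, for contradiction, that \(\Phi\colon\mc{S}_Q\to\mc{B}_X\) is a \(d\)-transitive monomorphism and let \(A\) be a generating set as in Definition~\ref{d1.1}. Since \(\mc{S}_Q\) has the zero \(\theta\), we have \(\Phi(\theta)=\varnothing\); as no block of a partition is empty, \(\theta\notin A\). Since \(\{a,\theta\}\) is a subsemigroup of \(\mc{S}_Q\) not containing \(e\), no generating set can be contained in \(\{a,\theta\}\), so \(e\in A\); since \(\{e,\theta\}\) is a subsemigroup not containing \(a\), we likewise get \(a\in A\). As \(\{e,a\}\) already generates \(\mc{S}_Q\) (because \(a\circ a=\theta\)), we conclude \(A=\{e,a\}\); and since \(\Phi\) is injective, \(\{\Phi(e),\Phi(a)\}\) is a partition of \(X^{2}\) into two nonempty disjoint blocks.

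Finally I would extract the contradiction. Put \(E:=\Phi(e)\) and \(N:=\Phi(a)\). Then \(E\) and \(N\) are nonempty, \(E\cap N=\varnothing\), \(E\cup N=X^{2}\), and, \(\Phi\) being a homomorphism, \(N\circ N=\Phi(\theta)=\varnothing\) and \(N\circ E=\Phi(a)=N\). Let \(X_1\) and \(X_2\) be the domain and the range of \(N\). By Lemma~\ref{l4.13}, \(N\circ N=\varnothing\) gives \(X_1\cap X_2=\varnothing\), and both \(X_1\) and \(X_2\) are nonempty because \(N\neq\varnothing\). Pick \(\langle p,q\rangle\in N\); then \(p\in X_1\) and \(q\in X_2\), so \(q\) does not lie in the domain \(X_1\) of \(N\), whence \(\langle q,p\rangle\notin N\), and therefore \(\langle q,p\rangle\in E\). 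Now \(\langle p,q\rangle\in N\) together with \(\langle q,p\rangle\in E\) forces \(\langle p,p\rangle\in N\circ E=N\), so \(p\in X_2\); this contradicts \(X_1\cap X_2=\varnothing\).

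The claim that \(Q\) is a partition of \(Y^{2}\) is immediate, since \(\Delta_Y\) and \(\nabla_Y\) are nonempty, disjoint and cover \(Y^{2}\). I expect the only delicate point to be the argument that the generating set must be exactly \(\{e,a\}\) (this is where the absorbing behaviour of \(a\) and \(\theta\) is used); the explicit description of \(\mc{S}_Q\), which is the only place the hypothesis \(|Y|\geqslant 3\) enters, and the final contradiction, a two-line composition argument using Lemma~\ref{l4.13}, are both routine.
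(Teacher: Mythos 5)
Your proposal is correct. The overall strategy coincides with the paper's: you compute \(\mc{S}_{Q} = \{\Delta_{Y}, \nabla_{Y}, Y^{2}\}\) explicitly (noting that \(|Y|\geqslant 3\) is what makes \(\nabla_{Y}\circ\nabla_{Y}=Y^{2}\) rather than \(\Delta_{Y}\)), you force the generating set to be exactly \(\{\Delta_{Y},\nabla_{Y}\}\), and you invoke Lemma~\ref{l4.13} to conclude that the domain \(X_1\) and range \(X_2\) of \(N=\Phi(\nabla_{Y})\) are disjoint. (Your justification of \(A=\{e,a\}\) via the subsemigroups \(\{a,\theta\}\) and \(\{e,\theta\}\) is in fact more explicit than the paper's bare assertion that there are only two generating sets.) Where you genuinely diverge is in the endgame. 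The paper works harder: it proves \(X = X_1\cup X_2\), then establishes the equality \(\Phi(\Delta_{Y}) = X_1^{2}\cup X_2^{2}\) by a separate contradiction argument, and finally observes that the nonempty set \(X_2\times X_1\) is covered by neither block, so \(\{\Phi(\Delta_{Y}),\Phi(\nabla_{Y})\}\) fails to partition \(X^{2}\). You instead pick a single pair \(\langle p,q\rangle\in N\), deduce \(\langle q,p\rangle\in E\) from disjointness of domain and range, and use \(N\circ E=N\) to place \(\langle p,p\rangle\in N\), which puts \(p\) in \(X_1\cap X_2=\varnothing\). This is a valid shortcut: it trades the paper's global structural description of \(\Phi(\Delta_{Y})\) for a pointwise composition computation, and it never needs the covering property of the partition beyond the single pair \(\langle q,p\rangle\). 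The paper's longer route does yield more information (an explicit description of both blocks), but for the purpose of the stated proposition your argument is complete and appreciably shorter.
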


\begin{proof}
It is clear that \(Q\) is a partition of \(Y^{2}\). Suppose there is a \(d\)-transitive monomorphism \(\Phi \colon \mc{S}_{Q} \to \mc{B}_X\) for a suitable set \(X\). Direct calculations show that
\[
\mc{S}_{Q} = \{\Delta_{Y}, \nabla_{Y}, Y^{2}\} \quad \text{and} \quad \nabla_{Y} \circ \nabla_{Y} = Y^{2}
\]
hold, and \(\Delta_{Y}\) is the identity of \(\mc{S}_{Q}\), and \(Y^{2}\) is the zero of \(\mc{S}_{Q}\). Let \(A\) be a set of generators of \(\mc{S}_{Q}\) for which \(\Phi(A)\), \(\Phi(A) = \{\Phi(a) \colon a \in A\}\), is a partition of \(X^{2}\). Since \(Y^{2}\) is a zero of \(\mc{S}_{Q}\) and \(\Phi\) is \(d\)-transitive, the equality \(\Phi(Y^{2}) = \varnothing\) holds. It implies \(Y^{2} \notin A\). There are exactly two sets, \(\{\Delta_{Y}, \nabla_{Y}\}\) and \(\{\Delta_{Y}, \nabla_{Y}, Y^{2}\}\), of generators of \((\mc{S}_{Q}, \circ)\). Consequently, we have \(A = \{\Delta_{Y}, \nabla_{Y}\}\).

Let \(X_1\) and \(X_2\) be the domain and, respectively, the range of the relation \(\Phi(\nabla_{Y})\). We claim that the equality
\begin{equation}\label{p4.14:e1}
\Phi(\Delta_{Y}) = X_1^{2} \cup X_2^{2}
\end{equation}
holds.

Let us prove the last equality. Lemma~\ref{l4.13} implies
\begin{equation}\label{p4.14:e2}
X_1 \cap X_2 = \varnothing
\end{equation}
and, moreover, from the definition of \(X_1\) and \(X_2\) it follows that
\begin{equation}\label{p4.14:e3}
\Phi(\nabla_{Y}) \subseteq X_1 \times X_2.
\end{equation}
Since \(\Phi\) is a monomorphism, we have
\begin{equation}\label{p4.14:e4}
\Phi(\nabla_{Y}) = \Phi(\Delta_{Y}) \circ \Phi(\nabla_{Y}) = \Phi(\nabla_{Y}) \circ \Phi(\Delta_{Y}).
\end{equation}
Let \(z\) be an arbitrary point of \(X\) and let \(x_1\) be an arbitrary point of \(X_1\). Suppose that \(z \notin X_1 \cup X_2\), then \eqref{p4.14:e3} implies \(\<z, x_1> \in \Phi(\Delta_{Y})\) because \(\{\Phi(\Delta_{Y}), \Phi(\nabla_{Y})\}\) is a partition of \(X^{2}\). Since \(X_2\) is the range of \(\Phi(\nabla_{Y})\), there is \(x_2 \in X_2\) such that \(\<x_1, x_2> \in \Phi(\nabla_{Y})\). Now, using~\eqref{p4.14:e4}, we obtain \(\<z, x_2> \in \Phi(\nabla_{Y})\). Consequently, \(z \in X_1\), that contradicts \(z \notin X_1 \cup X_2\). Thus, the equality \(X = X_1 \cup X_2\) holds. The last equality and \eqref{p4.14:e2} imply the double inclusion
\begin{equation}\label{p4.14:e5}
X^2 \supseteq \Phi(\Delta_{Y}) \supseteq X_1^{2} \cup X_2^{2}.
\end{equation}
If the set \(\Phi(\Delta_{Y}) \setminus (X_1^{2} \cup X_2^{2})\) is nonempty, then using~\eqref{p4.14:e5} and the equality \(X^{2} = \Phi(\Delta_{Y}) \cup \Phi(\nabla_{Y})\) we can find \(t_1 \in X_1\) and \(t_2 \in X_2\) such that \(\<t_1, t_2> \in \Phi(\Delta_{Y})\) or \(\<t_2, t_1> \in \Phi(\Delta_{Y})\). Without loss of generality we may suppose
\begin{equation}\label{p4.14:e6}
\<t_2, t_1> \in \Phi(\Delta_{Y}).
\end{equation}
Since \(X_1\) is the domain of \(\Phi(\nabla_{Y})\), from \(t_2 \in X_2\) it follows that there is \(x \in X_1\) such that
\begin{equation}\label{p4.14:e7}
\<x, t_2> \in \Phi(\nabla_{Y}).
\end{equation}
Now \eqref{p4.14:e4}, \eqref{p4.14:e6} and \eqref{p4.14:e7} give us \(\<x, t_1> \in \Phi(\nabla_{Y})\). Consequently, \(t_1 \in X_2\) holds, contrary to \(t_1 \in X_1\). Equality~\eqref{p4.14:e1} follows.

Equality~\eqref{p4.14:e1}, \(X = X_1 \cup X_2\), and inclusion~\eqref{p4.14:e3} imply
\[
X^{2} \setminus (\Phi(\Delta_{Y}) \cup \Phi(\nabla_{Y})) \supseteq X^{2} \setminus (X_1^{2} \cup X_2^{2} \cup (X_1 \times X_2)) = X_2 \times X_1 \neq \varnothing,
\]
i.e., \(\{\Phi(\Delta_{Y}), \Phi(\nabla_{Y})\}\) is not a partition of \(X^{2}\).

Thus, contrary to our supposition, \(\Phi\) is not a \(d\)-transitive monomorphism.
\end{proof}

\begin{remark}\label{r4.15}
If \(|Y| = 2\) and \(Q = \{\nabla_{Y}, \Delta_{Y}\}\), then the equalities
\[
\Delta_{Y} \circ \nabla_{Y} = \nabla_{Y}  = \nabla_{Y} \circ \Delta_{Y} \quad \text{and} \quad \nabla_{Y} \circ \nabla_{Y} = \Delta_{Y} = \Delta_{Y} \circ \Delta_{Y}
\]
hold. Consequently, \((\mc{S}_{Q}, \circ)\) is a group of order \(2\) for which the identity mapping \(\Id \colon \mc{S}_{Q} \to \mc{B}_Y\) is a \(d\)-transitive monomorphism.
\end{remark}

The last remark shows, in particular, that the constant \(3\) cannot be replaced by any smaller integer in Proposition~\ref{p4.14}.

\medskip
\textbf{Acknowledgments.} This research was partially supported by State Fund for Fundamental Research of Ukraine, Project F75/28173. The author thanks Prof. Ruslan Shanin, Odessa I.~I.~Mechnikov National University, for the useful discussions of the presented results.


\end{document}